\documentclass{amsart}

\usepackage[T1]{fontenc}
\usepackage[utf8]{inputenc}
\usepackage{lmodern}
\usepackage{amsmath,amssymb,amscd,mathtools}
\usepackage{graphicx}
\usepackage{float}
\usepackage{tikz}
\usepackage[all]{xy}
\usepackage{hyperref}

\numberwithin{equation}{section}

\newtheorem{theorem}{Theorem}[section]
\newtheorem{proposition}[theorem]{Proposition}
\newtheorem{lemma}[theorem]{Lemma}

\theoremstyle{remark}
\newtheorem{remark}[theorem]{Remark}
\theoremstyle{plain}

\newcommand{\C}{\mathcal C}
\newcommand{\A}{\mathcal A}
\newcommand{\B}{\mathcal B}
\newcommand{\Z}{\mathcal Z}
\newcommand{\D}{\mathcal D}

\newcommand{\M}{\mathcal M}
\newcommand{\Irr}{\mathrm{Irr}}
\newcommand{\FPdim}{\mathrm{FPdim}}
\newcommand{\Hom}{\mathrm{Hom}}
\renewcommand{\Vec}{\mathrm{Vec}}
\newcommand{\FP}{\mathrm{FP}}
\newcommand{\Dim}{\operatorname{Dim}}
\newcommand{\EVAL}{\mathrm{EVAL}}
\newcommand{\TVEVAL}{\operatorname{TV\text{-}EVAL}}
\newcommand{\RTEVAL}{\operatorname{RT\text{-}EVAL}}
\newcommand{\ConnSimp}{\mathrm{conn,simp}}
\newcommand{\defnfont}[1]{\emph{#1}}

\title{A Complexity Dichotomy for Quantum Invariants of 3-Manifolds}
\author{César Galindo}
\address{Departamento de Matemáticas, Universidad de los Andes, Bogotá, D.C.
111711, Colombia}
\email{cn.galindo1116@uniandes.edu.co}

\date{\today}

\begin{document}

\begin{abstract}
We determine the complexity of exact evaluation of the Reshetikhin--Turaev and
Turaev--Viro invariants of closed connected oriented 3-manifolds, with the
underlying tensor category fixed. If \(\C\) is a modular category, then the
Reshetikhin--Turaev invariant \(Z_\C(M)\) can be computed in polynomial time from
a framed-link surgery presentation of \(M\) precisely when \(\C\) is pointed;
otherwise the problem is \(\#\mathrm P\)-hard. If \(\A\) is a spherical fusion
category, then the Turaev--Viro invariant \(|M|_\A\) can be computed in
polynomial time from a triangulation of \(M\) precisely when the Drinfeld center
\(\Z(\A)\) is pointed, equivalently when \(\A\) is trivializable pointed;
otherwise the problem is \(\#\mathrm P\)-hard. This proves the dichotomy
conjectured by Bridges and Samperton and identifies the categorical obstruction
to polynomial-time evaluation.
\end{abstract}

\maketitle
\section{Introduction}
\label{sec:introduction}

Quantum invariants of 3-manifolds provide a meeting point for low-dimensional
topology, tensor categories, statistical mechanics, and quantum computation.
Among the most prominent examples are the Reshetikhin--Turaev and
Turaev--Viro invariants, defined respectively from modular categories and from
spherical fusion categories
\cite{ReshetikhinTuraev1991,TuraevViro1992,BarrettWestbury1996,TuraevBook,
TuraevVirelizier2017,BakalovKirillov2001,EGNO2015}.  Both invariants admit
finite combinatorial descriptions: surgery presentations in the
Reshetikhin--Turaev case and triangulations in the Turaev--Viro case.  A finite
formula, however, does not by itself settle the complexity question.  For
a fixed category, how hard is the resulting invariant as the 3-manifold varies?

We work throughout in the exact-evaluation model.  The categorical data are
fixed, together with a number field containing the structural constants and the
normalization scalars; the input is only a combinatorial presentation of the
3-manifold.  Thus \(\FP\) means polynomial-time computation in this fixed
number field, and \(\#\mathrm P\)-hardness is understood under polynomial-time
Turing reductions from integer-valued \(\#\mathrm P\) counting problems
\cite{Papadimitriou1994}.

For a modular category \(\C\), let \(\RTEVAL(\C)\) denote the problem of
computing \(Z_\C(M)\) from a framed-link surgery presentation.  For a spherical
fusion category \(\A\), let \(\TVEVAL(\A)\) denote the problem of computing
\(|M|_\A\) from a triangulation.  The main result identifies the dividing line
in terms of the fixed category: \(\RTEVAL(\C)\) is polynomial-time computable
precisely when \(\C\) is pointed, while \(\TVEVAL(\A)\) is polynomial-time
computable precisely when \(\Z(\A)\) is pointed, equivalently when \(\A\) is
trivializable pointed.  In all other cases the corresponding problem is
\(\#\mathrm P\)-hard.

Complexity questions entered quantum topology early through evaluations of
link polynomials.  Jaeger, Vertigan, and Welsh studied the complexity of
evaluating the Jones and Tutte polynomials \cite{JaegerVertiganWelsh1990}, while
approximation of quantum link invariants became closely connected with quantum
computation
\cite{FreedmanKitaevWang2002,FreedmanLarsenWang2002,AharonovJonesLandau2009,
KuperbergJonesApprox2015}.  Further connections between complexity assumptions
and Jones-polynomial evaluations in topology were studied in
\cite{CuiFreedmanWang2016}.  From the categorical point of view, related
questions appear in the classification of anyonic systems and in finiteness and
universality properties of braid group representations
\cite{NaiduRowell2011,RowellWang2018}.

In the 3-manifold setting, Kirby and Melvin gave an early hardness result.  They
showed that the \(SU(2)\)
Witten--Reshetikhin--Turaev invariant at the fourth root of unity, equivalently the
level-two or Ising theory, determines the number of zeros of cubic forms over
\(\mathbb Z_2\), and concluded that the corresponding evaluation problem
is NP-hard \cite{KirbyMelvinLocalSurgery2004}. Thus the Ising theory already
exhibits hardness phenomena for 3-manifold evaluation. Related formulas
at exceptional roots include the sixth-root formula of Kirby--Melvin--Zhang and
the metaplectic link invariants of Goldschmidt--Jones
\cite{KirbyMelvinZhang1993,GoldschmidtJones1989}; for computational aspects of
metaplectic modular categories, see \cite{HastingsNayakWang2014}.  Further
complexity phenomena for quantum and finite-group 3-manifold invariants were
studied by Kuperberg and Samperton
\cite{KuperbergSampertonZombies2018,KuperbergSampertonColoring2021}.  On the
algorithmic side, Turaev--Viro type invariants have also been studied from the
viewpoint of effective computation and parameterized complexity
\cite{BurtonMariaSpreer2018,MariaSpreer2016,MariaSpreer2020}.  More recently,
Delaney, Maria, and Samperton studied Tambara--Yamagami quantum invariants,
proving hardness results and fixed-parameter algorithms in terms of the first Betti
number \cite{DelaneyMariaSampertonTY2025}.

This paper is motivated by the conjecture of Bridges and
Samperton \cite{BridgesSampertonTQFTDichotomy2025}. They prove a
polynomial-time/\(\#\mathrm P\)-hard dichotomy for tensor contraction problems
associated with Turaev--Viro--Barrett--Westbury and Reshetikhin--Turaev type
TQFTs, using the Cai--Chen dichotomy theorem for weighted constraint
satisfaction problems over \(\mathbb C\) \cite{CaiChen2017}. They also formulate the analogous
dichotomy for closed 3-manifold invariants as a conjecture, leaving open the categorical property
that separates the polynomial-time cases from the \(\#\mathrm P\)-hard ones.
The results of this paper
prove that conjecture and identify the
separating property intrinsically.

We first state the Reshetikhin--Turaev dichotomy.

\begin{theorem}
Let \(\mathcal C\) be a modular category over an algebraically closed field of
characteristic zero, with a fixed number field containing the relevant scalars.
If \(\mathcal C\) is pointed, then
\[
        \RTEVAL(\mathcal C) \in \FP.
\]
If \(\mathcal C\) is not pointed, then
\(\RTEVAL(\mathcal C)\) is \(\#\mathrm P\)-hard.
\end{theorem}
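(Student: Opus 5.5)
The proof splits into the polynomial-time side for pointed \(\C\) and the hardness side for non-pointed \(\C\).

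\textbf{Pointed case.} A pointed modular category is \(\C(G,q)\) for a finite abelian group \(G\) with a nondegenerate quadratic form \(q\). For a surgery presentation \(L\) with \(n\) components and linking matrix \(A\) (framings on the diagonal), the Reshetikhin--Turaev invariant becomes, up to the fixed normalization \(D^{-n-1}\) and a signature-dependent power of the Gauss sum \(p_\pm=\sum_g q(g)^{\pm1}\), a quadratic Gauss sum:
\[
Z_\C(M)\;\propto\;\sum_{g\in G^n}\ \prod_i q(g_i)^{A_{ii}}\prod_{i<j} b(g_i,g_j)^{A_{ij}} .
\]
The reason is that a coloring of a link by invertible objects evaluates to the product of twists and double braidings. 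The steps are as follows.
\begin{enumerate}
\item Compute \(A\) and \(\sigma(A)\) from the diagram in polynomial time, the signature being obtained by exact rational diagonalization.
\item Decompose \(G\) into primary cyclic parts. The form \(q\) splits orthogonally up to the standard \(2\)-primary blocks of rank one or two, so the sum factors into Gauss sums for quadratic forms over \(\mathbb Z/p^k\) in \(n\) or \(2n\) variables.
\item Evaluate each such Gauss sum in polynomial time by diagonalizing the symmetric matrix over \(\mathbb Z/p^k\). For \(p\) odd this is Jordan splitting by elementary row and column operations. For \(p=2\) one uses the standard reduction to \(1\times1\) and \(2\times2\) blocks. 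Each one-dimensional or two-dimensional Gauss sum is then a closed-form element of the fixed number field.
\end{enumerate}
This is the classical argument in the style of Jenkins and Cai--Chen--Lipton--Lu. Since \(G\) is fixed, all moduli are constants, and the arithmetic stays polynomial.

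\textbf{Non-pointed case.} Here I would reduce from a known \(\#\mathrm P\)-hard problem via the Bridges--Samperton framework. The plan is as follows.
\begin{enumerate}
\item Non-pointed means some simple object \(X\) has \(\FPdim X>1\), equivalently \(d_X^2\neq 1\) for a suitable spherical structure, or at least the fusion rules are not a group.
\item Use surgery on links with many parallel, cabled components to simulate arbitrary tensor networks. Bridges and Samperton showed that closed-manifold RT invariants arising from surgery on specific link families encode weighted CSP partition functions. Using the Cai--Chen dichotomy, hardness follows unless the constraint functions have affine or product type.
\item Identify the constraint functions as built from the \(S\)-matrix and the twists. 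The key claim is then that the tractable case of Cai--Chen forces \(S\) to be, up to scalar, a character table of an abelian group with entries of modulus \(D^{-1}\). Unitarity of \(S\) (or \(|S_{0X}|=d_X/D\) in the pseudo-unitary case) then gives \(d_X=\pm1\) for all \(X\), so \(\C\) is pointed.
\item For the Turing-reduction details, use interpolation. Take connected sums and cables with varying framing to realize the powers needed to isolate the hard counting problem. Number-field arithmetic is exact, so Vandermonde inversion is polynomial.
\end{enumerate}

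\textbf{Main obstacle.} The main difficulty is Step 3 of the non-pointed case. One has to show that every non-pointed modular category genuinely produces a non-tractable signature in the Cai--Chen sense using only closed manifolds. Link complements or tensor networks with open boundary are not available, so the gadgets must be closed up by surgery while keeping the \(S\)-matrix entries visible.

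My first attempt would be to encode a Potts-like or Tutte-type sum in which the edge weight is the "\(\omega_0\)-projector" form \(\sum_X S_{0X}S_{XY}\). I would then show via the Verlinde formula that its rank-one and affine conditions fail as soon as some \(d_X\neq\pm1\). Failing that, I would fall back on Galois conjugation to reduce to the pseudo-unitary case, where \(d_X>1\) gives a direct positivity obstruction.
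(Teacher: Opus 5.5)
Your pointed case follows the paper's argument. The surgery formula collapses to the quadratic Gauss sum you wrote over copies of the fixed abelian group, and this sum is evaluated in polynomial time. The paper cites Delaney--Maria--Samperton where you sketch Jordan splitting. One small correction: the diagonal weight should be the twist \(\theta_a=q(a)d_a\). It differs from \(q\) by a linear character when the spherical character \(d\) is nontrivial, which is harmless.

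The hardness half has a genuine gap, and it is exactly where the content of the theorem lies. Your Step 2 credits Bridges--Samperton with encoding weighted CSPs by invariants of \emph{closed} manifolds obtained by surgery. What they proved is a dichotomy for tensor-contraction problems; the closed-manifold dichotomy is the conjecture this theorem resolves, so that step assumes the conclusion. You never exhibit a closed gadget, and your one concrete candidate collapses: \(\sum_X S_{0X}S_{XY}=(S^2)_{0Y}=\Dim(\C)\,\delta_{0,Y}\) depends on a single label, so it is not an edge interaction. You also do not address the anomaly. When \(\Delta_+\neq\Delta_-\), cutting and gluing carry Maslov-type correction factors, so a clean state sum over a gluing pattern does not come for free.

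The missing idea is an explicit family of manifolds with a computable state sum. In the paper, for a graph \(G\) one glues blocks \(\Sigma_{1,\deg v}\times S^1\) along boundary tori by the \(S\)-transformation, one gluing per edge. The result \(M_G\) is the boundary of a plumbing of Euler-number-zero disk bundles over \(T^2\), so it has a polynomial-size \(0\)-framed surgery diagram. For anomaly-free \(\C\), the computation goes as follows.
\begin{enumerate}
\item The trace formula makes each vertex weight a genus-one Verlinde dimension \(\sum_i d_i^{-r}\prod_j S_{ia_j}\).
\item Each edge contributes \(S_{a,b^*}/\D\).
\item The identity \(S^2=\D^2\mathsf C\) eliminates the half-edge labels, giving \(Z_\C(M_G)=\D^{|E(G)|}Z_{A_\C}(G)\) with \(A_\C(i,j)=S_{ij}/(d_id_j)\).
\end{enumerate}
This needs one oracle call per graph and no cabling or interpolation.

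The step you call the main obstacle is then short. The row and column of \(A_\C\) indexed by \(0\) are all ones, so multiplicative-block-rank-one forces every entry to be a root of unity. Then \(\sum_j|S_{ij}|^2=\Dim(\C)\) gives \(d_i^2=1\). Hence the eigenvalues \(S_{ij}/d_j=d_iA_\C(i,j)\) of \(N_i\) have modulus one, so \(\FPdim V_i=1\) and every simple is invertible. The Cai--Govorov graph-homomorphism dichotomy (bounded degree, simple graphs) then gives hardness.

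Finally, the paper removes the anomaly by passing to \(\Z(\C)\simeq\C\boxtimes\C^{\mathrm{rev}}\), not by tracking corrections. This center is anomaly-free and still non-pointed, and \(Z_{\Z(\C)}(M)=Z_\C(M)\,Z_\C(-M)\) computes it with two oracle calls, on \(L\) and its mirror.
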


Thus the Reshetikhin--Turaev evaluation problem satisfies a dichotomy governed
by whether the modular category is pointed.  In the pointed case, the simple
objects form a finite abelian group, and the invariant reduces to a Gauss sum
over this group, computable in polynomial time by finite abelian linear algebra;
see Section~\ref{sec:pointed-modular-categories-v2}. In the hard direction,
every non-pointed modular category yields a \(\#\mathrm P\)-hard graph-counting
problem; see Sections~\ref{sec:anomaly-free-case-v2} and
\ref{sec:arbitrary-modular-v2}.

We next state the Turaev--Viro dichotomy.  For every spherical fusion category
\(\mathcal A\), the polynomial-time case is exactly the case in which
\(\mathcal Z(\mathcal A)\) is pointed.  By
\cite[Theorem~3.2]{AngionoGalindo2017}, this is equivalent to saying that
\(\mathcal A\) is tensor equivalent to \(\mathrm{Vec}^{\omega}_{\Lambda}\) for
a finite abelian group \(\Lambda\), with \([\omega]\) satisfying the
trivializability condition recalled in Section~\ref{sec:tv-dichotomy-v2}.

\begin{theorem}
Let \(\mathcal A\) be a spherical fusion category over an algebraically closed field
of characteristic zero, with a fixed number field containing the relevant scalars.
If \(\mathcal A\) is trivializable pointed, then
\[
        \TVEVAL(\mathcal A) \in \FP.
\]
If \(\mathcal A\) is not trivializable pointed, then
\(\TVEVAL(\mathcal A)\) is \(\#\mathrm P\)-hard.
\end{theorem}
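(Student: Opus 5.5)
The plan is to reduce the Turaev--Viro statement to the Reshetikhin--Turaev theorem stated just above, using the Turaev--Walker theorem
\[
|M|_\A = Z_{\Z(\A)}(M),
\]
valid for any spherical fusion category \(\A\) over an algebraically closed field of characteristic zero; here \(\Z(\A)\) is modular with trivial central charge. The reduction has two halves.

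\emph{Polynomial-time direction.} Assume \(\A\) is trivializable pointed. By \cite[Theorem~3.2]{AngionoGalindo2017}, \(\Z(\A)\) is then pointed. Given a triangulation \(T\) of \(M\), I will proceed in two steps.
\begin{enumerate}
\item Convert \(T\) in polynomial time into a framed-link surgery presentation of \(M\). One route is to take a Heegaard splitting from a handle decomposition of \(T\) and write the gluing map as a product of Dehn twists. A more direct route is to evaluate \(|M|_\A\) on \(T\) itself.
\item Evaluate \(Z_{\Z(\A)}(M)\) on that presentation, which the pointed case of the RT theorem does in \(\FP\).
\end{enumerate}
The cleaner choice is the direct one. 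For \(\A=\Vec^\omega_\Lambda\), the state sum is
\[
|\Lambda|^{-\#\text{vertices}} \sum_{c} \prod_{\text{tetrahedra}} \omega(c)^{\pm1},
\]
where \(c\) ranges over \(\Lambda\)-valued 1-cocycles, equivalently over homomorphisms \(\pi_1\to\Lambda\). I would then use trivializability of \([\omega]\) to rewrite the phase \(\langle c^*\omega,[M]\rangle\) as a quadratic-type function on \(H^1(M;\Lambda)\). Computing \(H^1(M;\Lambda)\) via Smith normal form and summing the Gauss-type expression uses the same finite abelian linear algebra as Section~\ref{sec:pointed-modular-categories-v2}. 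I expect to invoke the explicit description of trivializable 3-cocycles recalled in Section~\ref{sec:tv-dichotomy-v2}: cohomologous to a product of terms \(a\cdot\beta(b,c)\), which makes the pairing bilinear/quadratic in \(c\).

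\emph{Hardness direction.} Assume \(\A\) is not trivializable pointed. Then \(\Z(\A)\) is a non-pointed modular category, so \(\RTEVAL(\Z(\A))\) is \(\#\mathrm P\)-hard by the RT theorem. I need a polynomial-time many-one, or Turing, reduction from \(\RTEVAL(\Z(\A))\) to \(\TVEVAL(\A)\). Given a framed link \(L\subset S^3\), I would construct a triangulation of the surgered manifold \(M_L\) in time polynomial in the crossing number:
\begin{enumerate}
\item triangulate the exterior of \(L\) from a diagram, using a standard construction of linear size;
\item glue in solid tori along the framing curves, with the framing integer encoded by layered solid tori whose size is polynomial, in fact linear, in \(|n|\), or logarithmic via continued fractions.
\end{enumerate}
Then \(|M_L|_\A = Z_{\Z(\A)}(M_L)\), and this equals the RT invariant up to the normalization of \(\Z(\A)\). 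That normalization is harmless: the central charge is trivial, the remaining factor is a power of \(\dim\A\), and these all lie in the fixed number field.

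\emph{Main obstacle.} The subtle point is making sure the hardness for \(\Z(\A)\) really comes from the RT theorem as stated, and not only for anomaly-free categories. Since \(\Z(\A)\) is anomaly-free, Section~\ref{sec:anomaly-free-case-v2} suffices. I expect the most delicate step to be the pointed-case computation: verifying that the trivializability condition is exactly what makes the cocycle pairing computable by linear algebra. If the direct route stalls, I would fall back on the Turaev--Walker identity together with polynomial-time conversion of triangulations into surgery diagrams. That conversion is known but technical; the cost is the length of Dehn-twist factorizations, which I would bound via the Heegaard diagram from \(T\).
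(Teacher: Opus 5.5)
Your proposal follows the paper's proof in both directions. On the polynomial-time side your ``direct route'' is the paper's argument: sum over \(Z^1(t;\Lambda)\), replace a trivializable \(\omega\) by a cohomologous product of the cocycles \(\eta_i,\eta_{ij}\) (first argument times a carry cocycle \(k_n\) of the last two), so the tetrahedral weight becomes a quadratic form and the invariant becomes a finite abelian Gauss sum after Smith normal form. The step you flag as delicate is exactly Lemma~\ref{lem:tv-weight-quadratic-form-v2}, and your reading of the exponent of \(c^*\eta_{ij}\) as the cup product of \(c_i\) with the Bockstein of \(c_j\) is a good conceptual route to it. On the hard side you use the same ingredients as the paper: \(|M|_\A=Z_{\Z(\A)}(M)\), Theorem~\ref{thm:core-anomaly-free-hardness-v2} for the anomaly-free non-pointed center, and polynomial-size triangulations of surgery manifolds.

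Several details need repair.

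First, your triangulation step is polynomial only if framings are measured in unary. The claim that a framing \(n\) costs size logarithmic in \(|n|\) via continued fractions is false. The slope \(n/1\) has the single partial quotient \(n\), and layered solid tori grow with the sum, not the number, of partial quotients. Indeed \(L(2n,1)\), which is surgery on the unknot with framing \(2n\), needs \(2n-3\) tetrahedra (Jaco--Rubinstein--Tillmann). So you should not claim a reduction from \(\RTEVAL(\Z(\A))\) on arbitrary framed links. Instead use, as the paper does, that the reduction in Theorem~\ref{thm:core-anomaly-free-hardness-v2} only queries the \(0\)-framed links \(L_G\). For these, Cha's construction gives at most \(144c\) tetrahedra (Proposition~\ref{prop:effective-triangulation-v2}).

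Second, your state sum omits the spherical structure. A trivializable pointed \(\A\) is \(\Vec^{\omega,d}_\Lambda\) for some character \(d:\Lambda\to\{\pm1\}\), and the summand acquires the factor \(\chi_{d,t}(c)\) of \eqref{eq:pointed-tv-spherical-state-sum-v2}. This factor is harmless, since it only adds a linear term to the exponent, but it must be included.

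Third, \(1\)-cocycles are not ``equivalently'' homomorphisms \(\pi_1\to\Lambda\). They surject onto \(H^1(M;\Lambda)\) with fibres of size \(|\Lambda|^{|V(t)|-1}\). Descending to \(H^1\) therefore requires termwise gauge invariance of the summand, which you would have to recheck once \(\chi_{d,t}\) is present. The paper sidesteps this by summing directly over \(Z^1(t;\Lambda)=\ker\delta_t\), which Smith normal form presents as a sum of cyclic groups.

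Finally, hardness uses the converse of the Angiono--Galindo direction you quote: if \(\Z(\A)\) is pointed, then \(\A\) is trivializable pointed. The paper supplies the short argument for this before invoking Angiono--Galindo. Dominance of the forgetful functor \(\Z(\A)\to\A\) forces \(\A\) to be pointed, and its group of simples is abelian because it is the image of the braided pointed center.
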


The geometric input is an explicit construction assigning to each finite graph
\(G\) a graph manifold \(M_G\). For a graph \(G\), write \(V(G)\) and
\(E(G)\) for its vertex and edge sets.  The construction assigns to each vertex
of \(G\) the block
\[
        \Sigma_{1,r}\times S^1,
\]
where \(r\) is the valence of the vertex, and glues the boundary tori according
to the edges of \(G\) by a fixed mapping class.  We later use the corresponding
four-dimensional plumbing description: \(M_G\) is the boundary of a plumbing
with one torus vertex for each vertex of \(G\); see
Proposition~\ref{prop:MG-plumbing-model-v2}. The construction is effective:
from \(G\) one obtains, in polynomial time,
a framed-link surgery presentation of \(M_G\), and from it a triangulation of
\(M_G\). These presentations have polynomial size in the size of \(G\). In
the anomaly-free Reshetikhin--Turaev case the \(\#\mathrm P\)-hardness is already
witnessed by the manifolds \(M_G\) themselves. For arbitrary modular categories,
the center reduction uses the same family together with its orientation reverses.
For Turaev--Viro invariants it is witnessed by triangulations produced from the
same family.

The computation needed for the reduction is a Reshetikhin--Turaev formula for
these graph manifolds.
Let \(\mathcal C\) be anomaly-free and modular, with the conventions recalled
in Subsection~\ref{subsec:modular-tensor-categories-v2}, and write
\[
        I=\mathrm{Irr}(\mathcal C), \qquad S=(S_{ij})_{i,j\in I}, \qquad d_i=S_{0i}.
\]
Define the matrix
\[
        A_{\mathcal C}(i,j)=\frac{S_{ij}}{d_i d_j}.
\]
Then Theorem~\ref{thm:graph-manifold-rt-formula-v2} gives
\begin{equation}
\label{eq:intro-rt-graph-partition-identity-v2}
        Z_{\mathcal C}(M_G)
        =
        \D_{\mathcal C}^{\,|E(G)|}
        \sum_{\phi:V(G)\to I}
        \prod_{\{u,v\}\in E(G)}
        A_{\mathcal C}(\phi(u),\phi(v)),
\end{equation}
where \(\D_{\mathcal C}=\Delta_+=\Delta_-\).  Hence, up to a known nonzero
factor depending only on \(\mathcal C\) and \(|E(G)|\), the invariant of \(M_G\)
is the weighted graph homomorphism partition function with weight matrix
\(A_{\mathcal C}\).

Formula~\eqref{eq:intro-rt-graph-partition-identity-v2} is closely related to
Turaev's shadow formula for graph manifolds
\cite[Chapter X, Theorem~9.3.1]{TuraevBook}. Turaev's formula treats a more
general class of graph manifolds, by realizing them as boundaries of
four-dimensional plumbings and evaluating the corresponding shadow state sums.
However, its categorical hypotheses are not the ones needed here: Turaev's
formula assumes that every self-dual simple object has second
Frobenius--Schur indicator \(+1\). Our reduction instead has to work in the
anomaly-free setting, because it passes to Drinfeld centers; those centers are
anomaly-free, but they need not satisfy the Frobenius--Schur condition. We
therefore give a direct proof of the special graph-manifold formula needed for
the reduction.

The complexity-theoretic input is Cai--Govorov's dichotomy theorem for complex
weighted graph homomorphism partition functions \cite{CaiGovorov2020}.  The
graph-counting problem appearing in
\eqref{eq:intro-rt-graph-partition-identity-v2} is the following: for a fixed
symmetric matrix \(A=(A_{ij})\), compute
\[
        Z_A(G)=
        \sum_{\phi:V(G)\to I}
        \prod_{\{u,v\}\in E(G)}A_{\phi(u),\phi(v)}
\]
on connected simple graphs of bounded degree. The hard side of Cai--Govorov's
dichotomy says that if \(A\) is not multiplicative-block-rank-one, then this
evaluation problem is \(\#\mathrm P\)-hard. We prove that, for the matrix
\(A_{\mathcal C}\), this failure occurs exactly when \(\mathcal C\) is
non-pointed. Hence every non-pointed
anomaly-free modular category gives a \(\#\mathrm P\)-hard
Reshetikhin--Turaev evaluation problem on the family \(M_G\).

The anomaly-free assumption is removed by passing to the Drinfeld center. If
\(\mathcal C\) is modular, then
\[
        \mathcal Z(\mathcal C)\simeq \mathcal C\boxtimes \mathcal C^{\mathrm{rev}},
\]
and \(\mathcal Z(\mathcal C)\) is anomaly-free.  Moreover, if \(\mathcal C\) is
non-pointed, then \(\mathcal Z(\mathcal C)\) is non-pointed.  The result in the
anomaly-free case applied to \(\mathcal Z(\mathcal C)\) therefore shows that
\(\RTEVAL(\mathcal Z(\mathcal C))\) is \(\#\mathrm P\)-hard.  Finally,
multiplicativity under Deligne products and the behavior under reversing the
braiding give
\[
        Z_{\mathcal Z(\mathcal C)}(M)
        =
        Z_{\mathcal C}(M)\,Z_{\mathcal C}(-M),
\]
and hence \(\RTEVAL(\mathcal Z(\mathcal C))\) reduces in polynomial
time to \(\RTEVAL(\mathcal C)\).  This proves the result
for arbitrary non-pointed modular categories.  The Turaev--Viro dichotomy then
follows from the identity
\[
        |M|_{\mathcal A}=Z_{\mathcal Z(\mathcal A)}(M)
\]
and from the criterion for \(\mathcal Z(\mathcal A)\) to be pointed.
In the trivializable pointed case, the pointed state sum is reduced to a finite
abelian Gauss sum.

Thus, in the RT and TV settings considered here there is no intermediate
complexity case: each evaluation problem is either computable by finite abelian
linear algebra and Gauss sums or is \(\#\mathrm P\)-hard. The dividing line is
whether \(\C\) is pointed in the RT case and whether \(\Z(\A)\) is pointed in the
TV case.

The paper is organized as follows.
Section~\ref{sec:modular-categories-quantum-invariants} fixes the categorical
conventions and the evaluation problems for Reshetikhin--Turaev and
Turaev--Viro invariants.
Section~\ref{sec:graph-partition-complexity} recalls the graph partition functions
and the Cai--Govorov dichotomy used in the reductions.
Section~\ref{sec:graph-manifold-family-v2} constructs the graph-manifold family
\(M_G\) and records the effective surgery and triangulation presentations.
Section~\ref{sec:rt-formula-v2} proves the Reshetikhin--Turaev formula for
\(M_G\).
Section~\ref{sec:anomaly-free-case-v2} proves the non-pointed anomaly-free
case.
Section~\ref{sec:pointed-modular-categories-v2} proves polynomial-time computability for
pointed modular categories.
Section~\ref{sec:arbitrary-modular-v2} removes the anomaly-free assumption by
passing to centers.
Finally, Section~\ref{sec:tv-dichotomy-v2} proves the Turaev--Viro dichotomy,
including the polynomial-time algorithm for the trivializable pointed case.

{\bf Acknowledgements.} The author thanks Diego Romero, Eric Samperton, and
Zhenghan Wang for useful comments.

ChatGPT 5.5 was used for language editing, to improve the clarity of portions of
the final manuscript, to assist with bibliographic searches related to specific
points, and to help convert author-created hand-drawn sketches into the final
figures. Apart from these uses, the mathematical content of the paper was
developed by the author.

The author was partially supported by Grant INV-2025-213-3452 from the School of
Science of Universidad de los Andes.

\section{Modular categories and quantum invariants}
\label{sec:modular-categories-quantum-invariants}

Throughout the paper \(k\) is an algebraically closed field of characteristic
zero, and all tensor categories are \(k\)-linear. This section fixes the
terminology and normalizations used in the reductions: spherical fusion
categories for Turaev--Viro invariants, ribbon and modular categories for
Reshetikhin--Turaev invariants, and the corresponding state-sum normalizations.
We use
\cite{ENO2005,EGNO2015} for fusion categories and
\cite{TuraevBook,BakalovKirillov2001,TuraevVirelizier2017} for ribbon, modular,
and state-sum conventions.

\subsection{Fusion, spherical, and ribbon categories}
\label{subsec:fusion-and-modular-categories}

A \defnfont{fusion category} is a semisimple rigid monoidal category with simple
unit, finite-dimensional morphism spaces, and finitely many isomorphism classes
of simple objects. If \(\A\) is fusion, we write \(\Irr(\A)\) for these classes
and choose representatives \(V_i\), \(i\in\Irr(\A)\). The unit class is denoted
by \(0\), and \(i^*\) is determined by \(V_{i^*}\cong V_i^*\); see
\cite[Definition~4.1.1]{EGNO2015} and
\cite[Section~4.5.1]{TuraevVirelizier2017}.
The fusion coefficients are defined by
\[
V_i\otimes V_j\cong \bigoplus_{\ell\in\Irr(\A)}
N_{ij}^{\ell}V_\ell.
\]
For fixed \(i\), the \defnfont{fusion matrix} \(N_i\) is the matrix of
multiplication by \([V_i]\) in the Grothendieck ring, in the basis of simple
objects. Its Perron--Frobenius eigenvalue is the
\defnfont{Frobenius--Perron dimension} \(\FPdim(V_i)\); see
\cite[Chapter~3]{EGNO2015}.

A fusion category is \defnfont{pointed} if every simple object is invertible;
for pointed fusion categories, see
\cite[Definition~5.11.1]{EGNO2015}.

A \defnfont{pivotal structure} on a fusion category is a tensor natural
isomorphism from the identity functor to the double-dual functor. Such a
structure defines left and right pivotal traces of endomorphisms, and it is
\defnfont{spherical} if these traces agree. In a spherical fusion category the
categorical dimension of an object is the pivotal trace of its identity; for a
simple object \(V_i\), we denote it by \(\dim_\A(V_i)\). The global dimension is
\begin{equation}
\label{eq:spherical-global-dimension-v2}
\Dim(\A)=\sum_{i\in\Irr(\A)}\dim_\A(V_i)^2.
\end{equation}
A \defnfont{braiding} on a fusion category is a natural commutativity constraint
\[
c_{X,Y}:X\otimes Y\longrightarrow Y\otimes X
\]
satisfying the hexagon axioms. A \defnfont{ribbon structure} is a twist
on a braided fusion category \(\C\), that is, a natural automorphism
\(\theta:\operatorname{id}_\C\Rightarrow\operatorname{id}_\C\) with components
\(\theta_X:X\to X\), satisfying the balancing identity
\[
\theta_{X\otimes Y}
=(\theta_X\otimes\theta_Y)\circ c_{Y,X}\circ c_{X,Y}
\]
and the duality compatibility
\[
\theta_{X^*}=(\theta_X)^*.
\]
A ribbon fusion category comes with a spherical structure, and its dimensions
are the dimensions used in the ribbon graphical calculus; see
\cite[Definition~8.10.1]{EGNO2015} and
\cite[Chapter~I, Sections~1--2]{TuraevBook}.

\subsection{Modular tensor categories}
\label{subsec:modular-tensor-categories-v2}

Let \(\C\) be a ribbon fusion category. We write
\[
I=\Irr(\C)
\]
and choose representatives \(V_i\), \(i\in I\), with \(V_0=\mathbf 1\). The
unnormalized \(S\)-matrix is
\[
S=(S_{ij})_{i,j\in I},
\qquad
S_{ij}
=\operatorname{tr}_{V_i\otimes V_j}
\left(c_{V_j,V_i}\circ c_{V_i,V_j}\right),
\]
where the trace is the spherical trace coming from the ribbon structure. Thus
\(S_{ij}\) is the invariant of the Hopf link with components colored by
\(V_i\) and \(V_j\). The category \(\C\) is a \defnfont{modular category}, or
\defnfont{modular tensor category}, if this matrix \(S\) is invertible; see
\cite[Definitions~8.13.1--8.13.4]{EGNO2015},
\cite[Chapter~3, Definition~3.1.1]{BakalovKirillov2001},
and \cite[Chapter~II, Section~1.4]{TuraevBook}.
Since \(V_i\) is simple, the twist \(\theta_{V_i}\) is a scalar multiple of
\(\operatorname{id}_{V_i}\); we write this scalar as \(\theta_i\). We use the
unnormalized convention in which the quantum dimension is
\[
d_i:=S_{0i}.
\]
With the trace convention above, \(d_i\) is the spherical dimension of
\(V_i\); see \cite[Chapter~II, Section~1]{TuraevBook} and
\cite[Section~4.5.2]{TuraevVirelizier2017}. Thus
\begin{equation}
\label{eq:modular-global-dimension-v2}
\Dim(\C)=\sum_{i\in I}d_i^2.
\end{equation}
Following Turaev's normalization for 3-manifold invariants, whenever a modular
category is used as input for the Reshetikhin--Turaev invariant we also fix a
\defnfont{rank}, that is, a choice of scalar
\[
D_\C\in k^\times,\qquad D_\C^2=\Dim(\C).
\]
This choice is not canonical and is part of the fixed Reshetikhin--Turaev
normalization.
For a modular category the numbers \(d_i\) are nonzero, so the denominators
involving \(d_i\) below are nonzero; see
\cite[Chapter~II, Section~1.4]{TuraevBook} and
\cite[Lemma~4.2(a)]{TuraevVirelizier2017}.

The fusion coefficients of \(\C\) are recovered from \(S\) by the Verlinde
formula
\begin{equation}
\label{eq:verlinde-formula-v2}
N_{ij}^\ell
=
\frac{1}{\Dim(\C)}
\sum_{a\in I}\frac{S_{ia}S_{ja}S_{\ell^*a}}{d_a}.
\end{equation}
See \cite[Corollary~8.14.4]{EGNO2015}.

We call the following scalars the positive and negative \defnfont{Gauss sums}
of \(\C\):
\[
\Delta_+=\sum_{i\in I}\theta_i d_i^2,
\qquad
\Delta_-=\sum_{i\in I}\theta_i^{-1}d_i^2.
\]
For a modular category these scalars are non-zero and satisfy
\begin{equation}
\label{eq:gauss-product-v2}
\Delta_+\Delta_-=\Dim(\C).
\end{equation}
The \defnfont{anomaly} of \(\C\) is the scalar
\[
\alpha_\C:=\frac{\Delta_+}{\Delta_-}\in k^\times.
\]
We call \(\C\) \defnfont{anomaly-free} if \(\alpha_\C=1\), equivalently
\(\Delta_+=\Delta_-\). In that case we write
\[
\D:=\Delta_+=\Delta_-,
\]
and hence
\begin{equation}
\label{eq:anomaly-free-dimension-v2}
\D^2=\Dim(\C)=\sum_{i\in I}d_i^2.
\end{equation}
See \cite[Definition~8.15.1 and Proposition~8.15.4]{EGNO2015} and
\cite[Section~4.5.2]{TuraevVirelizier2017}.

Let \(\mathsf C=(\mathsf C_{ij})\) be the charge-conjugation matrix,
\(\mathsf C_{ij}=\delta_{i,j^*}\).
We use the modular-data identities
\begin{equation}
\label{eq:basic-modular-identities-v2}
S_{ij}=S_{ji},
\qquad
S_{i,j^*}=S_{i^*,j},
\qquad
d_{i^*}=d_i,
\qquad
S^2=(\Delta_+\Delta_-)\mathsf C=\Dim(\C)\mathsf C.
\end{equation}
For these modular-data identities and the Verlinde formula, see
\cite[Sections~8.13--8.16]{EGNO2015} and
\cite[Chapter~II, Section~3]{TuraevBook}.

The computational model requires the relevant scalars to lie in a number
field. Since \(k\) has characteristic zero, we identify the prime field with
\(\mathbb Q\subset k\). For a modular datum over \(k\), it is proved in
\cite{Etingof2009ModularData} that \(S_{ij}/d_j\) is an algebraic integer and
that the dimensions \(d_i\) are algebraic over \(\mathbb Q\); the twists have
finite order. Hence the scalars
\[
S_{ij},\qquad d_i,\qquad \theta_i,\qquad \Delta_\pm,
\qquad
\frac{S_{ij}}{d_i d_j}
\]
belong to a finite extension of \(\mathbb Q\) inside \(k\). After adjoining the
fixed rank \(D_\C\), if necessary, the same is true for the square root of
\(\Dim(\C)\). We also use the unitarity consequence proved there: after
embedding this finite extension in \(\mathbb C\), the numbers \(d_i\) are real,
\(\Dim(\C)=\sum_i d_i^2\) is positive, and the unnormalized \(S\)-matrix
satisfies
\[
SS^\dagger=\Dim(\C)I.
\]
See \cite{Etingof2009ModularData} and
\cite[Theorem~8.17.4 and Corollary~8.18.2]{EGNO2015}.

For the evaluation problems below, the category is always understood
together with a number field containing the relevant scalars. We fix a subfield
\[
k_0\subset k
\]
which is finite over \(\mathbb Q\), contains the structural constants and
normalization scalars needed to evaluate the relevant state sums or ribbon-link
invariants, and contains the modular-data scalars displayed above in the modular
case. The invariant values are regarded as elements of
\(k_0\). In the complexity-theoretic formulation we choose, once and for all, an
isomorphism from \(k_0\) onto a number field in \(\mathbb C\).

\subsection{RT and TV evaluation problems}
\label{subsec:rt-tv-evaluation-problems-v2}

The surgery input for the Reshetikhin--Turaev invariant is a framed-link
surgery presentation. Every closed connected oriented 3-manifold is obtained by
surgery on a framed link
\[
L\subset S^3.
\]
A surgery presentation may therefore be encoded by a framed-link diagram, together
with one integer framing on each component. If \(L\) has \(m\) components, let
\[
B_L\in M_m(\mathbb Z)
\]
be the linking matrix, with the framings on the diagonal, and let
\[
b_+(L),\qquad b_-(L),\qquad b_0(L)
\]
be the numbers of positive, negative, and zero eigenvalues of \(B_L\).

The ribbon graphical calculus of a ribbon category \(\C\) associates a scalar to
every \(\C\)-colored framed link in \(S^3\). If \(\C\) is modular, the
\defnfont{formal Kirby color} is the linear combination
\[
\Omega_\C=\sum_{i\in I} d_i V_i.
\]
If every component of \(L\) is colored by \(\Omega_\C\), we write
\[
\langle L(\Omega_\C)\rangle_\C
\]
for the corresponding ribbon-link invariant. The components of \(L\) may be
oriented arbitrarily: since \(d_{i^*}=d_i\), the formal color \(\Omega_\C\) is
invariant under duality, and the scalar is independent of these choices. For the
ribbon-graph invariant, the formal Kirby color, and the RT surgery normalization,
see \cite[Chapter~II, Sections~2--3]{TuraevBook} and
\cite[Theorems~4.1.12 and~4.1.16]{BakalovKirillov2001}. If \(M_L\) is the
closed connected oriented 3-manifold obtained by surgery on
\(L\), the chosen rank \(D_\C\) enters the Reshetikhin--Turaev normalization. In
the normalization used in this paper,
\begin{equation}
\label{eq:rt-surgery-normalization-v2}
Z_\C(M_L)
=
D_\C^{-b_0(L)-1}
\frac{\langle L(\Omega_\C)\rangle_\C}
{\Delta_+^{\,b_+(L)}\Delta_-^{\,b_-(L)}},
\qquad
D_\C^2=\Dim(\C).
\end{equation}
When \(\C\) is anomaly-free, we take the square root in this normalization to
be \(D_\C=\D\).
When Deligne products, reverse categories, and centers are used below, the
square-root choices are understood to be compatible with these constructions:
for the reverse modular category \(\C^{\mathrm{rev}}\), obtained from \(\C\) by
inverting the braiding and the twist, we use the same square root; for Deligne
products we use
\[
D_{\C\boxtimes\mathcal E}=D_\C D_{\mathcal E},
\]
and for centers we use the anomaly-free choice
\[
D_{\Z(\A)}=\Dim(\A).
\]
Here \(\Dim(\Z(\A))=\Dim(\A)^2\).
The square-root choices are taken compatibly with the RT TQFT normalizations;
see \cite[Chapter~IV, Sections~8--9]{TuraevBook} and
\cite[Corollaries~17.7--17.8]{TuraevVirelizier2017}.

We denote by
\[
\RTEVAL(\C)
\]
the function problem which takes as input a framed-link surgery
presentation of a closed connected oriented 3-manifold \(M\) and outputs
\(Z_\C(M)\).

For Turaev--Viro invariants, let \(\A\) be a spherical fusion category. The
Turaev--Viro invariant of a closed 3-manifold \(M\) is denoted by \(|M|_\A\).
The corresponding evaluation problem uses a triangulation of \(M\) as input; see
\cite[Theorem~13.1]{TuraevVirelizier2017}. We denote by
\[
\TVEVAL(\A)
\]
the function problem which takes as input a triangulation of a closed
connected oriented 3-manifold \(M\) and outputs \(|M|_\A\).

The comparison used below is through the Drinfeld center.
For a spherical fusion category \(\A\), its center
\[
\Z(\A)
\]
is an anomaly-free modular category; see
\cite[Theorem~5.4]{TuraevVirelizier2017}.
Moreover, the Turaev--Viro invariant of \(\A\) agrees with the
Reshetikhin--Turaev invariant of this center:
\begin{equation}
\label{eq:tv-rt-center-comparison-v2}
|M|_\A=Z_{\Z(\A)}(M)
\end{equation}
for every closed oriented 3-manifold \(M\); see
\cite[Theorem~17.1 and Corollary~17.7(a)]{TuraevVirelizier2017}.

\section{Graph partition functions and complexity}
\label{sec:graph-partition-complexity}

This section fixes the computational conventions and the graph-theoretic
dichotomy used in the reductions. The quantum invariants constructed below are
expressed as weighted graph homomorphism partition functions with algebraic
weights; the \(\#\mathrm P\)-hard cases are obtained from the bounded-degree
graph-homomorphism dichotomy of \cite{CaiGovorov2020}.

\subsection{Problems and reductions}
\label{subsec:exact-algebraic-computation}

All computational problems in this paper are exact evaluation problems.  The
algebraic datum, such as a category or a matrix, is fixed and is not part of the
input.  The input is only a finite combinatorial object: a framed-link surgery
presentation, a triangulation, or a graph.

The categories themselves are defined over the algebraically closed field \(k\).
For computational purposes, however, we work over a fixed number field \(K\)
containing the structural constants and normalization scalars needed for the
corresponding evaluation problem.

By a \defnfont{\(K\)-valued problem} we mean a function
\[
        F\colon \mathcal X\longrightarrow K,
\]
where \(\mathcal X\) is a class of finite combinatorial objects given by finite
encodings.  A \(K\)-valued problem is in
\(\FP\) if there is an algorithm which, on input \(x\in\mathcal X\), outputs the
exact encoding of \(F(x)\in K\) in time polynomial in the size of \(x\).

If \(F\colon\mathcal X\to K\) and \(G\colon\mathcal Y\to K\) are
\(K\)-valued problems, a \defnfont{polynomial-time Turing reduction}
\[
        F\leq_T^p G
\]
is an algorithm which computes \(F(x)\), for \(x\in\mathcal X\), in time
polynomial in the size of \(x\), using arithmetic in \(K\) and oracle calls
to \(G\) on inputs whose sizes are polynomially bounded in the size of \(x\).

Recall that \(\#\mathrm P\) is the class of integer-valued functions
\(f:\{0,1\}^\ast\to \mathbb N\) which count accepting computation paths of
nondeterministic polynomial-time machines \cite{Valiant1979}. The evaluation
problems considered in this paper are usually \(K\)-valued rather than
integer-valued, so we do not regard them as members of \(\#\mathrm P\).

A \(K\)-valued problem \(G\) is
\defnfont{\(\#\mathrm P\)-hard} if, for every \(f\in\#\mathrm P\), the
\(K\)-valued problem obtained from \(f\) by the inclusion
\[
        \mathbb Z\subset K
\]
admits a polynomial-time Turing reduction to \(G\).   We use this
oracle-reduction notion for \(K\)-valued evaluation problems throughout the paper
\cite{Papadimitriou1994,AroraBarak2009}.

\subsection{Weighted graph homomorphism partition functions}
\label{subsec:weighted-graph-homomorphism-functions-v2}

Let \(K\subset\mathbb C\) be a number field, let \(I\) be a finite set, and let
\[
A=(A_{ij})_{i,j\in I}\in M_I(K)
\]
be a fixed symmetric matrix. Graphs are finite and undirected. Unless explicitly
stated otherwise, they have no loops or multiple edges. For a graph \(G=(V,E)\),
define
\begin{equation}
\label{eq:graph-hom-partition-function-v2}
Z_A(G)=\sum_{\sigma:V\to I}\prod_{\{u,v\}\in E}A_{\sigma(u),\sigma(v)}.
\end{equation}
We refer to \(Z_A(G)\) as the weighted graph homomorphism partition function
defined by \(A\).
We write
\[
\EVAL(A)
\]
for the function problem which takes \(G\) as input and outputs
\(Z_A(G)\). For \(\Delta\geq1\), we write
\[
\EVAL^{(\Delta)}_{\ConnSimp}(A)
\]
for the restriction to connected simple graphs of maximum degree at most
\(\Delta\). This restricted form is used in the graph-manifold construction. See
\cite{CaiChenLu2013,CaiGovorov2020} for the
complex-valued graph-homomorphism dichotomy framework.

\subsection{A graph-homomorphism dichotomy}
\label{subsec:cai-govorov-v2}

We record the part of the Cai--Govorov dichotomy that will be used in the
reduction. The relevant algebraic condition is a multiplicative form of
block-rank-one. Only the square matrix case is needed here. For a matrix
\(A=(A_{ij})_{i,j\in I}\), its
support is
\[
\operatorname{supp}(A)=\{(i,j)\in I\times I:A_{ij}\neq0\}.
\]
The matrix \(A\) has \defnfont{rectangular support} if, after reordering rows
and columns separately, its zero--nonzero pattern is block diagonal, with each
nonzero block a full rectangle. Equivalently, there are pairwise disjoint
nonempty row sets \(R_1,\ldots,R_s\subseteq I\) and pairwise disjoint nonempty
column sets \(C_1,\ldots,C_s\subseteq I\), with any remaining rows or columns
zero, such that
\[
\operatorname{supp}(A)=
\bigsqcup_{p=1}^s R_p\times C_p.
\]
We allow \(s=0\), in which case \(A\) is the zero matrix.

If \(A\) has rectangular support, its \defnfont{nonzero support blocks} are the
submatrices
\[
A|_{R_p\times C_p},
\qquad 1\leq p\leq s.
\]
The matrix \(A\) is \defnfont{block-rank-one} if it has rectangular support and
each nonzero support block has ordinary matrix rank one. Equivalently, for each
\(p\), all \(2\times2\) minors inside \(R_p\times C_p\) vanish:
\[
A_{ij}A_{i'j'}=A_{ij'}A_{i'j}
\]
for all \(i,i'\in R_p\) and \(j,j'\in C_p\).

For \(r\geq1\), let
\[
A^{\circ r}=(A_{ij}^r)_{i,j\in I}
\]
be the \(r\)-th Hadamard power. Following \cite{CaiGovorov2020}, \(A\) is
\defnfont{multiplicative-block-rank-one} if \(A^{\circ r}\) is block-rank-one
for some \(r\geq1\). This condition is one of the algebraic conditions used in
the complex-valued graph-homomorphism dichotomy. The only consequence needed
below is the following implication proved by Cai--Govorov: if \(A\) is not
multiplicative-block-rank-one, then the evaluation problem \(G\mapsto
Z_A(G)\) is \(\#\mathrm P\)-hard on graphs of bounded degree. We use this
implication together with their extension to simple graphs.

The manifolds constructed in Section~\ref{sec:graph-manifold-family-v2} are
indexed by connected simple graphs. The restriction to connected graphs follows
from multiplicativity over connected components. If
\[
G=G_1\sqcup\cdots\sqcup G_m
\]
is the decomposition of \(G\) into connected components, then
\begin{equation}
\label{eq:graph-partition-components-v2}
Z_A(G)=\prod_{\alpha=1}^m Z_A(G_\alpha).
\end{equation}
Indeed, a labelling of \(G\) is the same as a collection of labellings of its
components, and the edge product in
\eqref{eq:graph-hom-partition-function-v2} factors accordingly. Hence an oracle
for connected simple graphs of maximum degree at most \(\Delta\) computes the
same partition function on all simple graphs of maximum degree at most
\(\Delta\), by first decomposing the input graph into connected components. Each
isolated vertex contributes the fixed scalar \(|I|\), and the empty graph
contributes \(1\).

We use the following bounded-degree consequence of the Cai--Govorov criterion
for non-multiplicative-block-rank-one matrices, together with their
extension of the dichotomy to simple graphs
\cite[Definition~3.4 and Theorem~20.4]{CaiGovorov2020}.

\begin{theorem}
\label{thm:cai-govorov-v2}
Let \(K\subset\mathbb C\) be a number field, let \(I\) be a finite set, and let
\[
A\in M_I(K)
\]
be a fixed symmetric matrix. If \(A\) is not multiplicative-block-rank-one, then
there exists a constant \(\Delta=\Delta(A)\) such that
\[
\EVAL^{(\Delta)}_{\ConnSimp}(A)
\]
is \(\#\mathrm P\)-hard under polynomial-time Turing reductions.
\end{theorem}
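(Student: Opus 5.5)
This statement is essentially a citation of Cai--Govorov. The plan is to extract it from their results and check that the restriction to connected simple graphs of bounded degree costs nothing.

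\emph{Step 1: hardness on bounded-degree graphs.} By \cite[Definition~3.4]{CaiGovorov2020}, failure of multiplicative-block-rank-one is exactly the hypothesis of the hard branch of their complex-weighted graph-homomorphism dichotomy. Their proof of that branch works with gadgets of bounded size and thickenings of bounded width. It therefore shows that \(\EVAL(A)\) is \(\#\mathrm P\)-hard already on graphs of maximum degree at most some \(\Delta_0=\Delta_0(A)\), possibly with multiple edges. This bounded-degree statement is the one I would quote.

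\emph{Step 2: passing to simple graphs.} I would invoke \cite[Theorem~20.4]{CaiGovorov2020}, which removes multiple edges. The mechanism is a standard interpolation: each parallel edge class is replaced by a path or stretching gadget of bounded length, and the original multigraph value is recovered from polynomially many oracle calls by solving a Vandermonde-type linear system over \(K\). Each vertex keeps bounded degree, so the resulting simple graphs have maximum degree at most some \(\Delta=\Delta(A)\ge\Delta_0\). This is the step where one must check that the degree bound is not destroyed. Since Cai--Govorov state that their simple-graph extension preserves bounded degree, I would cite it rather than reprove it.

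\emph{Step 3: passing to connected graphs.} By the multiplicativity \eqref{eq:graph-partition-components-v2}, an oracle for \(\EVAL^{(\Delta)}_{\ConnSimp}(A)\) computes \(Z_A(G)\) on any simple graph \(G\) of maximum degree at most \(\Delta\). One splits \(G\) into components in polynomial time, queries each nontrivial component, uses the fixed value \(|I|\) for each isolated vertex, and multiplies the results in \(K\). Composing the Turing reductions of Steps 1--3 then gives the theorem.

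The main obstacle is confirming that Cai--Govorov's hardness proof, in particular its simple-graph extension, genuinely stays within bounded degree. If it did not, I would first try to restore the bound myself by replacing each high-degree vertex with a bounded-degree expander or cycle gadget that forces equal labels through interpolation. That would need an argument specific to non-multiplicative-block-rank-one matrices, and I would not expect it to be routine.
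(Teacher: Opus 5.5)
Your proposal is correct and follows the paper's route. The paper also gives no independent argument: it quotes Cai--Govorov's bounded-degree hardness for matrices that are not multiplicative-block-rank-one together with their simple-graph extension \cite[Definition~3.4 and Theorem~20.4]{CaiGovorov2020}, and passes to connected graphs via the multiplicativity \eqref{eq:graph-partition-components-v2}, using the fixed value \(|I|\) for isolated vertices.

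One minor wording point: failing multiplicative-block-rank-one is a sufficient condition for the hard side of their dichotomy rather than exactly its hypothesis, but your argument only uses that implication, so nothing changes.
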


\section{The graph-manifold family}
\label{sec:graph-manifold-family-v2}

We associate to each finite connected simple graph \(G\) a closed oriented
3-manifold \(M_G\).  It is obtained by gluing product Seifert pieces along
boundary tori.  These boundary tori are parametrized as part of the
construction, and the parametrizations fix the coordinates used in the TQFT
computation of Section~\ref{sec:rt-formula-v2}.

The same manifolds can also be described as boundaries of four-dimensional
plumbings.  This plumbing viewpoint is used below to produce effective surgery
presentations; compare \cite[Chapter~X, Section~9]{TuraevBook} and
\cite[Example~4.6.2 and Section~6.1]{GompfStipsicz1999}.

\subsection{Half-edges and vertex blocks}
\label{subsec:local-blocks-v2}

Throughout this section \(G=(V(G),E(G))\) is a finite connected simple graph
with at least one edge. Its set of \defnfont{half-edges}, or incidences, is
\[
H(G)=\{(v,e):v\in V(G),\ e\in E(G),\ v\in e\}.
\]
For \(v\in V(G)\), put
\[
H(v)=\{(v,e)\in H(G):e\ni v\},
\qquad
\deg(v)=|H(v)|.
\]
The maximum degree of \(G\) is denoted by
\[
\Delta(G)=\max_{v\in V(G)}\deg(v).
\]
If \(e=\{u,v\}\), we write its two half-edges as
\[
h_{u,e}=(u,e),
\qquad
h_{v,e}=(v,e).
\]

For a finite set \(J\), write \(\Sigma_{1,J}\) for a compact connected
oriented genus-one surface whose boundary components are indexed by \(J\).
For each vertex \(v\), choose a compact connected oriented surface
\[
\Sigma_v=\Sigma_{1,H(v)}
\]
of genus one whose boundary components are indexed by the half-edges incident
to \(v\). For \(h\in H(v)\), let \(C_h\) denote the boundary component
corresponding to \(h\); thus each \(C_h\) is an oriented circle and
\[
\partial\Sigma_v=\bigsqcup_{h\in H(v)} C_h.
\]

The \defnfont{vertex block} corresponding to \(v\) is
\[
P_v=\Sigma_v\times S^1,
\]
with the product orientation. Its boundary is a disjoint union of tori
\[
\partial P_v=\bigsqcup_{h\in H(v)}T_h,
\qquad
T_h=C_h\times S^1.
\]
On \(T_h\) we distinguish the two circle directions
\[
\mu_h=C_h\times\{*\},
\qquad
\lambda_h=\{*\}\times S^1.
\]
Here \(\mu_h\) is the boundary-circle direction of the base surface and
\(\lambda_h\) is the product \(S^1\)-direction.

We compare all boundary tori with one fixed reference torus. Let
\[
T^2_{\mathrm{std}}=S^1_\mu\times S^1_\lambda
\]
be the standard torus, where \(S^1_\mu\) and \(S^1_\lambda\) are two oriented
copies of \(S^1\). We equip it with the ordered homology basis
\[
([S^1_\mu],[S^1_\lambda]).
\]
A \defnfont{parametrization} of a boundary torus \(T_h\) means an
orientation-preserving homeomorphism from this fixed reference torus to \(T_h\).
For every half-edge \(h\), fix such a parametrization
\[
p_h:T^2_{\mathrm{std}}\longrightarrow T_h
\]
which sends the oriented homology classes \([S^1_\mu]\) and
\([S^1_\lambda]\) to \([\mu_h]\) and \([\lambda_h]\), respectively.
These parametrizations are part of the construction.

\subsection{Boundary parametrizations and edge gluings}
\label{subsec:edge-gluings-v2}

Fix an orientation-preserving self-homeomorphism of \(T^2_{\mathrm{std}}\), also
denoted by \(s\), whose induced action on \(H_1(T^2_{\mathrm{std}};\mathbb Z)\)
is
\[
s_*=
\begin{pmatrix}
0&-1\\
1&0
\end{pmatrix}
\in \operatorname{SL}(2,\mathbb Z).
\]
Fix also an orientation-reversing involution
\[
\rho:T^2_{\mathrm{std}}\longrightarrow T^2_{\mathrm{std}},
\qquad
\rho_*=
\begin{pmatrix}
1&0\\
0&-1
\end{pmatrix}
\]
in the ordered basis \(([S^1_\mu],[S^1_\lambda])\).
If \(T_h\) is an oriented torus, then \(-T_h\) denotes the same underlying torus
with the opposite orientation. The orientation-preserving parametrization of the
opposite torus is
\[
p_h^-:=p_h\circ\rho:T^2_{\mathrm{std}}\longrightarrow -T_h.
\]
Here \(p_h\circ\rho\) is first the underlying continuous map to \(T_h\); in the
displayed formula the target is regarded with the opposite orientation. Since
\(\rho\) reverses orientation and the target orientation has also been reversed,
\(p_h^-\) is orientation-preserving.

For an edge \(e=\{u,v\}\), let
\[
h=h_{u,e},
\qquad
\bar h=h_{v,e}
\]
be its two half-edges. The \defnfont{edge-gluing map} associated with
\(e\) is the homeomorphism
\[
g_e:T_h\longrightarrow -T_{\bar h}
\]
defined by the commutative diagram
\[
\begin{CD}
T^2_{\mathrm{std}} @>{p_h}>> T_h\\
@V{s}VV @VV{g_e}V\\
T^2_{\mathrm{std}} @>{p_{\bar h}^-}>> -T_{\bar h}.
\end{CD}
\]
Thus
\begin{equation}
\label{eq:graph-edge-gluing-map-v2}
g_e=p_{\bar h}^-\circ s\circ p_h^{-1}:T_h\longrightarrow -T_{\bar h}.
\end{equation}
As an underlying map \(T_h\to T_{\bar h}\), it acts on first homology by
\begin{equation}
\label{eq:edge-gluing-homology-action-v2}
(g_e)_*[\mu_h]=-[\lambda_{\bar h}],
\qquad
(g_e)_*[\lambda_h]=-[\mu_{\bar h}].
\end{equation}
The signs come from the boundary orientations.

The closed 3-manifold associated with \(G\) is obtained from
\[
P_G^\circ=\bigsqcup_{v\in V(G)}P_v
\]
by gluing, for every edge \(e=\{u,v\}\), the torus \(T_{h_{u,e}}\) to
\(-T_{h_{v,e}}\) using \(g_e\). We denote the quotient by
\[
M_G.
\]

\begin{proposition}
\label{prop:MG-closed-connected-v2}
For every finite connected simple graph \(G\) with at least one edge, \(M_G\) is
a closed connected oriented 3-manifold.
\end{proposition}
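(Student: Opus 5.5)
We verify the three properties separately, using the standard fact that gluing compact manifolds along boundary components via homeomorphisms gives a manifold. Compactness is immediate: \(P_G^\circ\) is a finite disjoint union of compact 3-manifolds \(P_v=\Sigma_v\times S^1\), and \(M_G\) is a quotient of it, hence compact.

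For being a closed 3-manifold, note that each boundary torus of \(P_G^\circ\) is \(T_h\) for exactly one half-edge \(h\). Each half-edge lies in exactly one edge \(e\), and that edge glues \(T_{h_{u,e}}\) to \(T_{h_{v,e}}\). Since \(G\) is simple and has no loops, \(u\neq v\), so the two tori are distinct. Thus the gluings pair the boundary tori bijectively and disjointly. Points away from the glued tori already have Euclidean neighborhoods. At a point of a glued torus we use collar neighborhoods \(T_h\times[0,1)\) and \(T_{\bar h}\times[0,1)\); the homeomorphism \(g_e\) identifies these collars along their boundaries and produces a chart \(T\times(-1,1)\). Every boundary component is glued, so \(\partial M_G=\emptyset\). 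The quotient is Hausdorff because it is a quotient of a compact Hausdorff space by a closed equivalence relation generated by finitely many homeomorphisms between disjoint closed subsets.

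For the orientation, each \(P_v\) carries its product orientation, and \(T_h\) carries the induced boundary orientation. The gluing map \(g_e:T_h\to -T_{\bar h}\) is orientation-preserving. This holds because it is the composite \(p_{\bar h}^-\circ s\circ p_h^{-1}\) of three orientation-preserving maps: \(s_*\in\operatorname{SL}(2,\mathbb Z)\), and \(p_h^-\) is orientation-preserving onto \(-T_{\bar h}\) as noted before the statement. Gluing oriented manifolds along a boundary component by a map that is orientation-reversing with respect to the boundary orientations, that is, orientation-preserving onto \(-T_{\bar h}\), makes the orientations on the two sides agree across the collar chart. Hence the product orientations patch together to an orientation of \(M_G\).

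For connectedness, each \(P_v\) is connected, since \(\Sigma_v\) is connected and so is \(S^1\). Whenever \(\{u,v\}\in E(G)\), the images of \(P_u\) and \(P_v\) in \(M_G\) meet along the glued torus. Because \(G\) is connected, any two blocks are joined by a chain of blocks with consecutive ones intersecting, so \(M_G\) is connected.

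The main point requiring care is the orientation compatibility. I would check it by verifying in collar coordinates that outward normal followed by the oriented torus basis gives the same orientation class on both sides. The remaining steps are routine.
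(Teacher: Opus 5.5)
Your proposal is correct and follows the paper's argument: the boundary tori are paired bijectively through the half-edges, each \(g_e\colon T_h\to -T_{\bar h}\) is orientation-preserving so the block orientations glue, and connectedness of \(G\) together with connectedness of each \(P_v\) gives connectedness of \(M_G\). You add the point-set details the paper leaves implicit (compactness, Hausdorffness, collar charts); note also that the last factor of the composite for \(g_e\) should read \(p_{\bar h}^-\) rather than \(p_h^-\).
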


\begin{proof}
Every boundary torus of \(P_G^\circ\) is indexed by a half-edge, and every
half-edge belongs to a unique edge of \(G\). Hence every boundary component is
glued once. The maps \(g_e:T_h\to -T_{\bar h}\) are
orientation-preserving as maps to the oppositely oriented target. Hence the
orientations of the vertex blocks glue to an orientation of the quotient. Since
the blocks \(P_v\) are connected and the incidence graph of the gluing is
\(G\), connectedness of \(G\) implies connectedness of \(M_G\).
\end{proof}

\subsection{Plumbing and effective presentations}
\label{subsec:effective-presentations-v2}

The construction above is arranged so that \(M_G\) is the boundary of a
plumbed 4-manifold.  We use this plumbing description as an effective way to
produce the input presentations needed later: a framed-link surgery presentation
for Reshetikhin--Turaev evaluation and, from it, a triangulation for
Turaev--Viro evaluation.  The plumbing and Kirby-calculus conventions are those
of \cite[Example~4.6.2 and Sections~5.4,~6.1]{GompfStipsicz1999}.

Following Gompf--Stipsicz, let \(\Gamma_G\) be the oriented decorated plumbing
graph whose underlying graph is \(G\), with all vertex labels
\[
        (g_v,e_v)=(1,0)
\]
and with all edge signs positive.  Thus each vertex represents the oriented
\(D^2\)-bundle over \(T^2\) with Euler number zero, and each edge represents one
positive plumbing between the corresponding disk bundles.  In this family all
genera, Euler numbers, and plumbing signs are fixed; only the combinatorics of
\(G\) varies.

Gompf--Stipsicz give Kirby-calculus rules which, starting from a decorated
plumbing graph, produce a Kirby diagram for the corresponding plumbed
4-manifold.  We apply these rules to \(\Gamma_G\).  Choose a rooted spanning
tree
\[
        (\mathcal T,r)
\]
of \(G\).  The root \(r\) is used to order the edges of \(\mathcal T\): we
choose an order in which each edge attaches one new vertex to the part of the
tree already constructed.  Write
\[
        E_{\mathrm{cyc}}=E(G)\setminus E(\mathcal T).
\]
For each vertex \(v\), take one copy \(B_v\) of the Kirby diagram for
\(T^2\times D^2\), shown in Figure~\ref{fig:borromean-vertex-block}.  The two
local moves used to join these copies are shown in
Figure~\ref{fig:plumbing-moves}.  The Type I move is used for the edges of the
spanning tree \(\mathcal T\).  Thus, following the chosen order, each edge
\(e=\{u,v\}\in E(\mathcal T)\) is realized by applying the Type I move to the
corresponding copies \(B_u\) and \(B_v\).  For each remaining edge
\(e=\{u,v\}\in E_{\mathrm{cyc}}\), choose an auxiliary orientation, say from
\(u\) to \(v\), and realize \(e\) by applying the Type II move to the ordered
pair \(B_u,B_v\), as in the nonsimply connected plumbing move of
\cite[Figures~6.7--6.8]{GompfStipsicz1999}.  The simplicity assumption excludes
self-plumbings.

\begin{figure}[H]
\centering
\includegraphics[width=.56\textwidth]{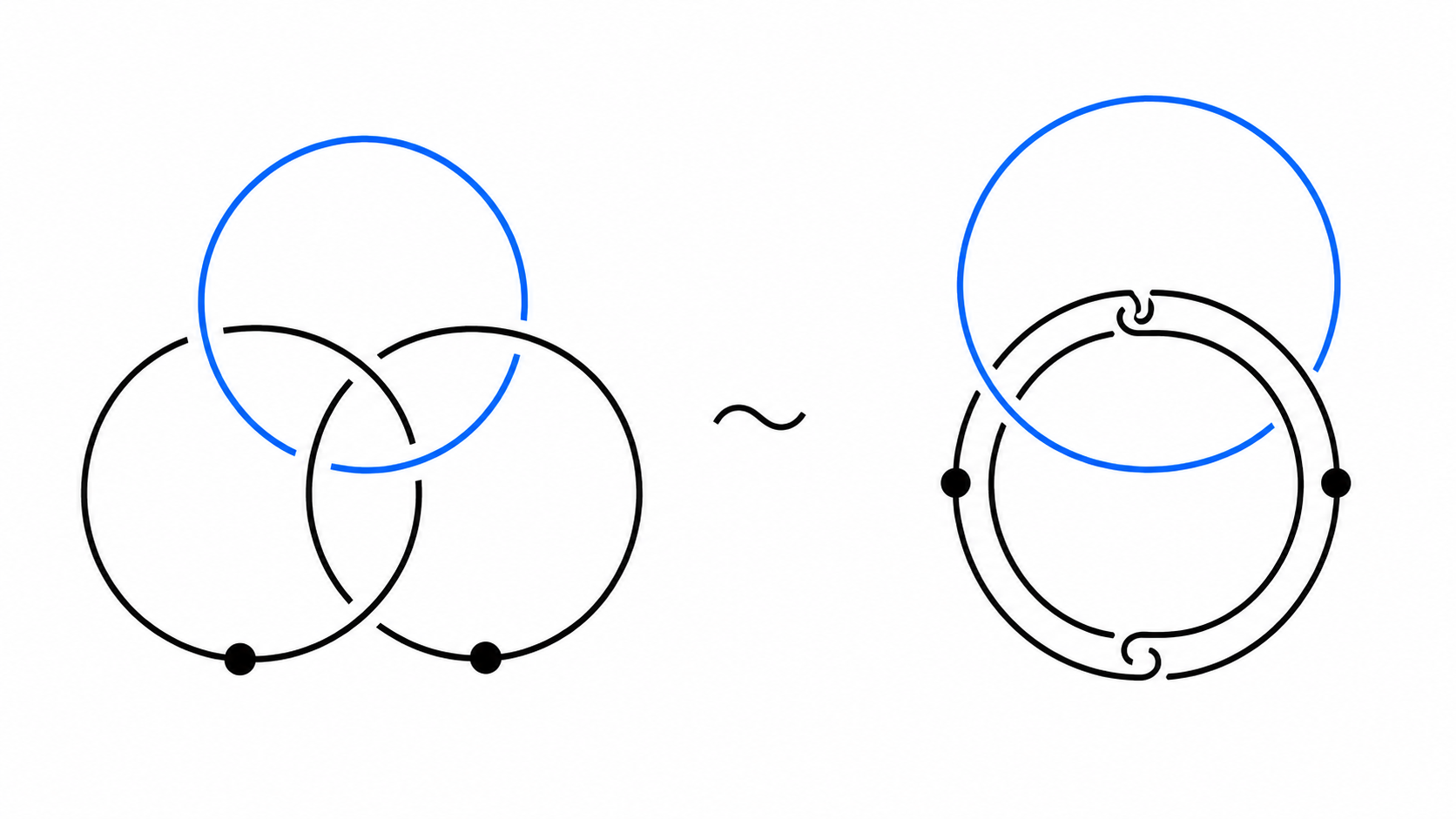}
\caption{The Kirby diagram for \(T^2\times D^2\), used as the vertex block.
The two dotted components represent the two \(1\)-handles of the torus base,
and the remaining \(2\)-handle component is \(0\)-framed.}
\label{fig:borromean-vertex-block}
\end{figure}

\begin{figure}[H]
\centering
\begin{minipage}{.48\textwidth}
\centering
\includegraphics[width=\textwidth,trim=95 105 95 105,clip]{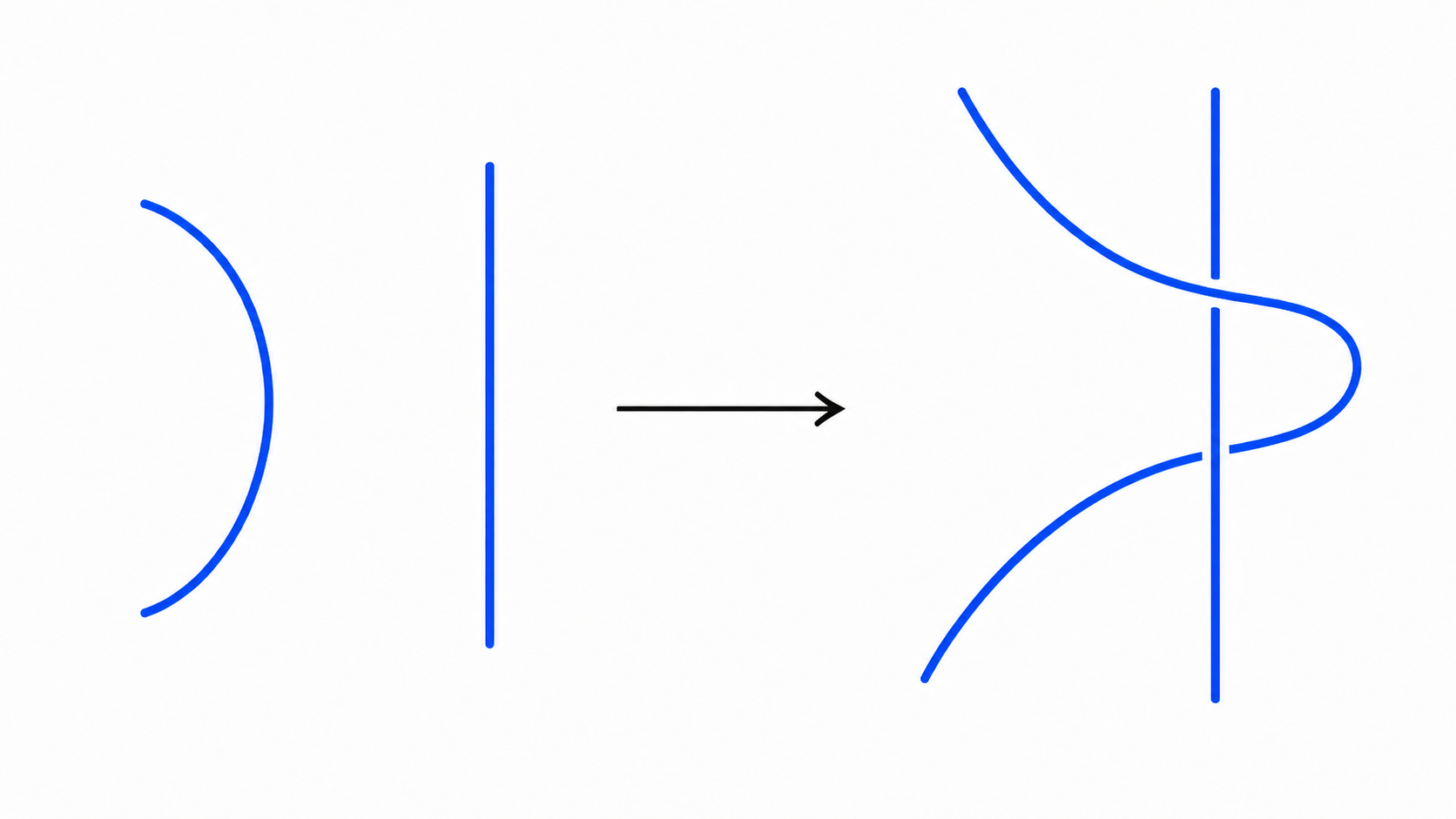}
\end{minipage}\hfill
\begin{minipage}{.48\textwidth}
\centering
\includegraphics[width=\textwidth,trim=95 105 95 105,clip]{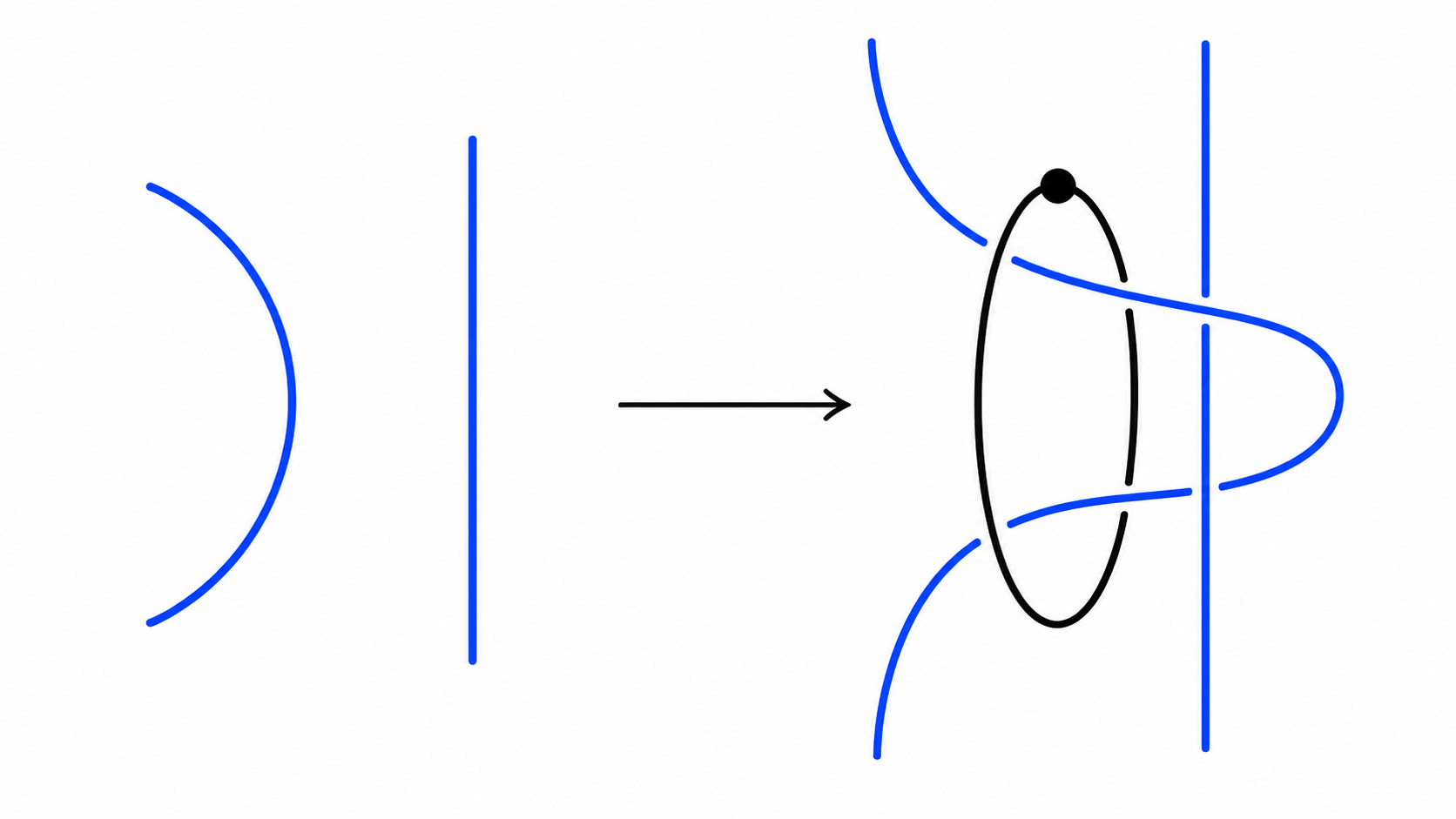}
\end{minipage}
\caption{Local plumbing moves: Type I for the edges of \(\mathcal T\), and
Type II for the edges in \(E_{\mathrm{cyc}}\) after choosing an orientation.}
\label{fig:plumbing-moves}
\end{figure}

After all local moves have been applied, the result is a Kirby diagram for the
plumbed 4-manifold associated with \(\Gamma_G\). Replace each dotted component,
following the \(1\)-handle notation of
\cite[Section~5.4]{GompfStipsicz1999}, by a \(0\)-framed surgery component. We
denote the resulting planar framed-link diagram by \(D_G\), and the framed link it
represents by
\[
        L_G\subset S^3.
\]
In the present family every component of \(L_G\) is \(0\)-framed.

\begin{proposition}
\label{prop:MG-plumbing-model-v2}
The manifold \(M_G\) is orientation-preservingly diffeomorphic to the boundary
of the four-dimensional plumbing associated with \(\Gamma_G\).
\end{proposition}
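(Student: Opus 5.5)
The plan is to identify the boundary of the plumbing with a union of circle bundles over punctured tori, glued along tori by the plumbing map, and then match this description with the definition of \(M_G\) piece by piece.

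\textbf{Step 1: vertex pieces.} Let \(X_v\to T^2\) be the oriented \(D^2\)-bundle with Euler number \(0\), so \(X_v=T^2\times D^2\). The plumbed 4-manifold \(X_{\Gamma_G}\) is obtained from \(\bigsqcup_v X_v\) by choosing, for each half-edge \(h\in H(v)\), a small disk \(D_h\subset T^2\). Over \(D_h\) the bundle is trivialized as \(D_h\times D^2\). For each edge \(e=\{u,v\}\) one then identifies \(D_{h_{u,e}}\times D^2\) with \(D_{h_{v,e}}\times D^2\) by exchanging the base and fiber factors. For a positive sign this identification is orientation-preserving. The boundary \(\partial X_{\Gamma_G}\) decomposes as follows. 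Over the complement \(T^2\setminus\bigsqcup_{h\in H(v)}\mathrm{int}\,D_h=\Sigma_{1,H(v)}\), the circle-bundle part of \(\partial X_v\) has Euler number \(0\), so it is trivial and equals \(\Sigma_v\times S^1=P_v\). The fiber circle \(\partial D^2\) is the \(\lambda\)-direction, and \(\partial D_h\) is the \(\mu\)-direction. The corner regions \(\partial D_h\times\partial D^2\) are the tori \(T_h\). Since the Euler numbers vanish and all genera equal one, no extra twisting or singular fibers appear. This is the standard decomposition of Neumann and Gompf--Stipsicz \cite[Example~4.6.2]{GompfStipsicz1999}.

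\textbf{Step 2: gluing maps.} Near each edge, the boundary of the plumbing region \(D^2\times D^2\) is \(\partial D_h\times D^2\cup D^2\times\partial D^2\). The solid tori are absorbed, so the two pieces \(P_u\) and \(P_v\) are glued directly along \(T_{h}\) and \(T_{\bar h}\) by the map exchanging factors. This map sends the base circle \(\mu_h\) to the fiber \(\lambda_{\bar h}\), and the fiber \(\lambda_h\) to the base circle \(\mu_{\bar h}\).

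The delicate point, and the one I expect to be the main obstacle, is tracking signs. Each torus must carry its induced boundary orientation, with outward normal in \(P_u\) and inward normal in \(P_v\). I would compute in the ordered bases \((\mu,\lambda)\) and check two things. First, the gluing map in homology is \(\begin{pmatrix}0&-1\\-1&0\end{pmatrix}\), matching \eqref{eq:edge-gluing-homology-action-v2}. Second, this matrix equals \(\rho_*s_*\), so that the map is exactly \(p_{\bar h}^-\circ s\circ p_h^{-1}\) up to isotopy. Positivity of the plumbing sign fixes the global sign. The opposite sign would give \(\begin{pmatrix}0&1\\1&0\end{pmatrix}\), which also has determinant \(-1\). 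By the hyperelliptic involution \(-\mathrm{id}\), applied in a single vertex block and extended as (orientation on \(\Sigma\) reversing) \(\times\) (reflection on \(S^1\)), the two choices give orientation-preservingly diffeomorphic manifolds in any case. I would record this as a safeguard against sign-convention mismatches.

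\textbf{Step 3: assembly.} Homeomorphisms of tori are determined up to isotopy by their action on \(H_1\). Gluings along isotopic maps produce diffeomorphic manifolds, and orientations are compatible because each piece carries its product orientation. Assembling the vertex pieces of Step 1 with the edge gluings of Step 2 therefore gives an orientation-preserving diffeomorphism from \(M_G\) to \(\partial X_{\Gamma_G}\).

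Since the framed link \(L_G\) was obtained by Gompf--Stipsicz's rules from \(\Gamma_G\), with dotted circles replaced by \(0\)-framed components (which does not change the boundary), the same argument also shows that surgery on \(L_G\) yields \(M_G\).
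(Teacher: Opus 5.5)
Your route is the paper's: identify \(\partial X_{\Gamma_G}\) with the blocks \(\Sigma_{1,H(v)}\times S^1\), glued along the corner tori by the base/fiber swap, and check that the swap acts on \(H_1\) by \(\rho_*s_*=\begin{pmatrix}0&-1\\-1&0\end{pmatrix}\).

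The flaw is the ``safeguard'' in Step~2.

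\begin{itemize}
\item Composing with \(-\mathrm{id}\) in one block (reflection of \(\Sigma_v\) times reflection of \(S^1\)) flips the signs of \emph{all} edges at that vertex. So it only changes the sign pattern by a coboundary in \(C^1(G;\mathbb Z/2)\). It never flips a single edge, and it cannot pass from the all-positive to the all-negative pattern when \(G\) has an odd cycle.
\item Reflecting the \(S^1\)-factor in every block conjugates \(\begin{pmatrix}0&1\\1&0\end{pmatrix}\) to \(\begin{pmatrix}0&-1\\-1&0\end{pmatrix}\). Hence the all-negative plumbing has boundary \(-M_G\), and your claim amounts to \(M_G\) being amphichiral.
\item For non-bipartite \(G\), which the reduction must allow, this is false. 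The \(P_v\) are the JSJ pieces, and their Seifert fibrations are unique. An orientation-reversing self-diffeomorphism therefore reverses either the fiber or the base on each block. Conjugating by the gluing matrix shows these two types must alternate across every edge, with zero Dehn-twist parameters, i.e.\ the map properly \(2\)-colours \(G\).
\end{itemize}

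So no convention-independence is available. The sign computation you postponed ("I would compute \dots\ and check") is the whole content of the proposition, and you still have to carry it out.

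It does come out as you predict; the paper asserts exactly this matrix without further detail. Here is the check.

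\begin{itemize}
\item On the corner torus of the chart \(D_h\times D^2\subset X_u\), write points as \((e^{i\alpha},e^{i\beta})\).
\item Since \(\partial(T^2\times D^2)=T^2\times\partial D^2\) carries the product orientation, \(\lambda_h=[\beta]\).
\item Since \(C_h\) is oriented as a boundary circle of \(T^2\setminus\operatorname{int}D_h\), \(\mu_h=-[\alpha]\).
\item Likewise, in \(X_v\), \(\lambda_{\bar h}=[\beta']\) and \(\mu_{\bar h}=-[\alpha']\).
\item The positive plumbing \((x,y)\mapsto(y,x)\) gives \(\alpha'=\beta\) and \(\beta'=\alpha\). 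Hence \(\mu_h\mapsto-\lambda_{\bar h}\) and \(\lambda_h\mapsto-\mu_{\bar h}\), which is \eqref{eq:edge-gluing-homology-action-v2}.
\item This matrix is its own inverse, so the choice of which half-edge is called \(h\) does not matter.
\end{itemize}

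With this computation in place of the safeguard, your Steps~1--3 prove the proposition.
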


\begin{proof}
In the plumbing model of oriented disk bundles over surfaces
\cite[Example~4.6.2 and Section~6.1]{GompfStipsicz1999}, a vertex \(v\)
corresponds to the oriented \(D^2\)-bundle over \(T^2\) with
Euler number zero.  For each half-edge incident to \(v\), choose a small disk in
the base and remove the corresponding local product \(D^2\times D^2\) from the
total space.  The vertical part of the boundary of the remaining piece is
\[
        \Sigma_{1,H(v)}\times S^1.
\]
On the boundary torus corresponding to a half-edge \(h\), the circle
\(C_h\times\{*\}\) is the base-boundary direction and \(\{*\}\times S^1\) is
the fiber direction.  A positive plumbing operation between two such local
products interchanges the base and fiber directions.  With the boundary
orientations used above, if \(h\) and \(\bar h\) are the two half-edges of the
plumbing edge, the underlying boundary map sends
\[
        \mu_h\longmapsto -\lambda_{\bar h},
        \qquad
        \lambda_h\longmapsto -\mu_{\bar h}.
\]
The displayed boundary map agrees with
\eqref{eq:edge-gluing-homology-action-v2}. Therefore the boundary of the
plumbed 4-manifold is obtained by gluing the boundary blocks
\[
        P_v=\Sigma_{1,H(v)}\times S^1
\]
using the maps \(g_e\) defined in \eqref{eq:graph-edge-gluing-map-v2}. Hence the
boundary of the plumbing is diffeomorphic to \(M_G\).
\end{proof}

We illustrate the construction on the graph \(G_0\) shown in
Figure~\ref{fig:example-graph}.

\begin{figure}[H]
\centering
\begin{tikzpicture}[
  scale=1.0,
  vertex/.style={circle,fill=black,inner sep=2.2pt},
  edge/.style={line width=.8pt}
]
\node[vertex,label=left:{\small \(1\)}] (x1) at (-1.8,0) {};
\node[vertex,label=below:{\small \(2\)}] (x2) at (0,0) {};
\node[vertex,label=above:{\small \(3\)}] (x3) at (1.6,1.1) {};
\node[vertex,label=below:{\small \(4\)}] (x4) at (1.6,-1.1) {};
\draw[edge] (x1) -- (x2);
\draw[edge] (x2) -- (x3);
\draw[edge] (x2) -- (x4);
\draw[edge] (x3) -- (x4);
\end{tikzpicture}
\caption{The example graph \(G_0\).}
\label{fig:example-graph}
\end{figure}

Choose the spanning tree with edges \(\{1,2\}\), \(\{2,3\}\), and
\(\{2,4\}\), root it at vertex \(1\), and process the edges in this order,
applying the Type I move each time.  The remaining edge is \(\{3,4\}\); for the
Type II move, orient it from \(4\) to \(3\). Applying these four local moves gives
the Kirby diagram in
Figure~\ref{fig:example-kirby-diagram}.

\begin{figure}[H]
\centering
\includegraphics[width=.44\textwidth,trim=120 0 450 0,clip]{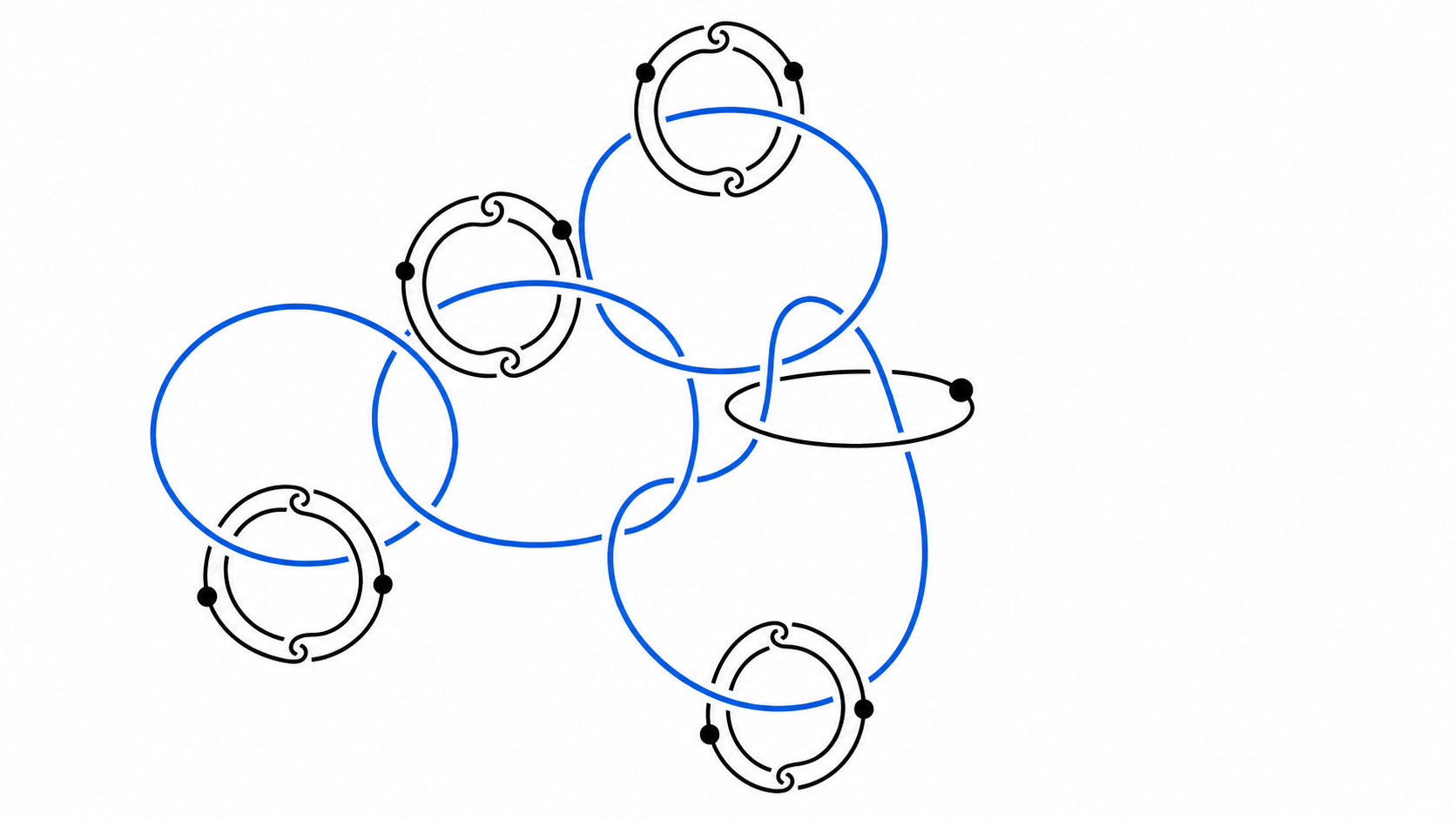}
\caption{The Kirby diagram for \(G_0\).}
\label{fig:example-kirby-diagram}
\end{figure}

Finally, replace every dotted circle by a \(0\)-framed surgery component.  In
the picture this is shown by removing the dot markings and keeping the same link
diagram; the result is the planar surgery diagram in
Figure~\ref{fig:example-surgery-link}.

\begin{figure}[H]
\centering
\includegraphics[width=.44\textwidth,trim=120 0 450 0,clip]{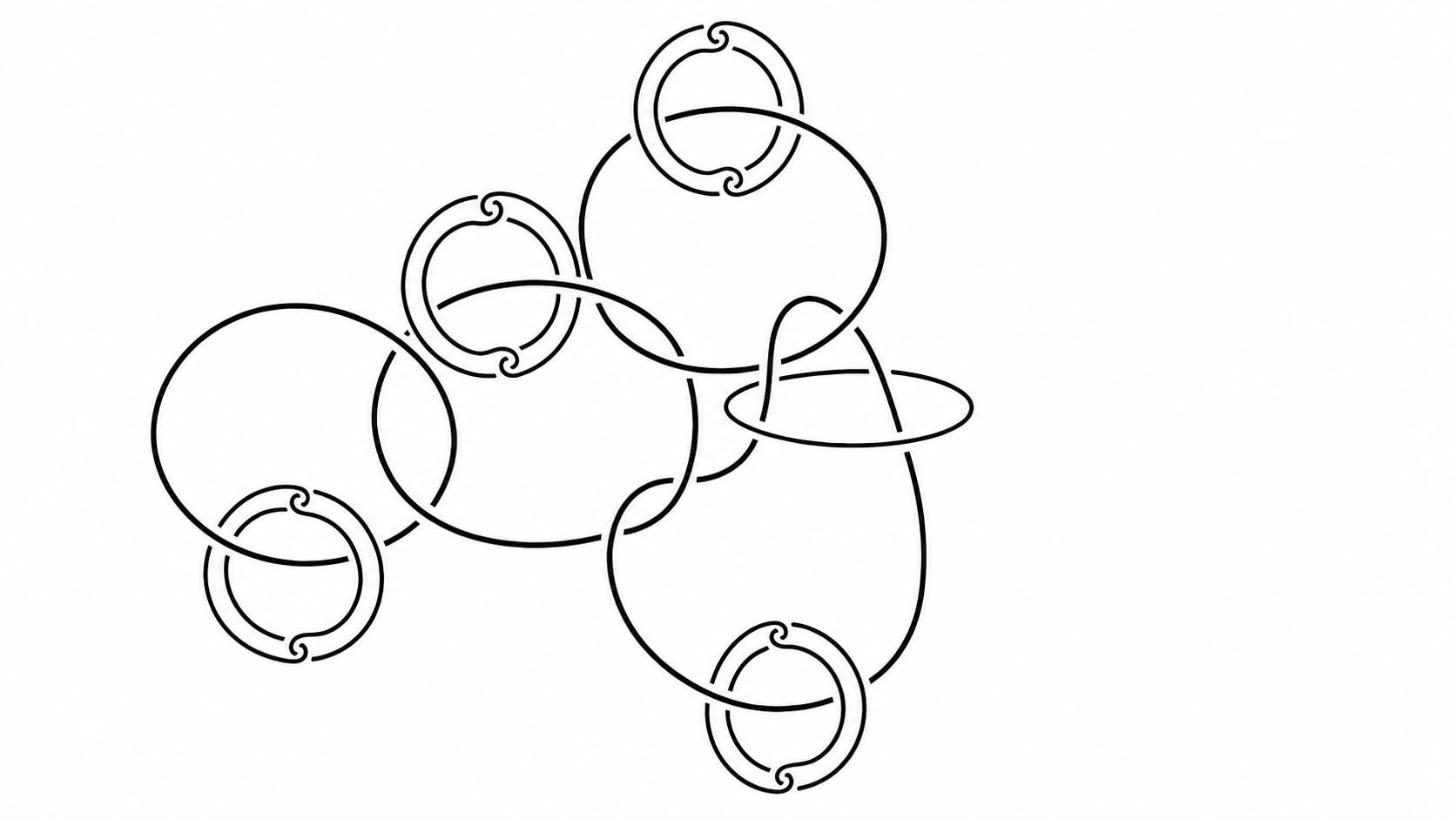}
\caption{The framed surgery link obtained from
Figure~\ref{fig:example-kirby-diagram}.}
\label{fig:example-surgery-link}
\end{figure}

\begin{proposition}
\label{prop:effective-surgery-v2}
There is a polynomial-time algorithm which, given a finite connected simple graph
\(G\) with at least one edge, outputs a planar framed-link diagram \(D_G\)
representing a framed link \(L_G\subset S^3\), all of whose components are
\(0\)-framed, such that surgery on \(L_G\) presents \(M_G\):
\[
        S^3_{L_G}\cong M_G.
\]
\end{proposition}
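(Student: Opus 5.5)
The proof has two parts: the construction described before the statement can be carried out in polynomial time, and its output is a surgery presentation of \(M_G\). For the algorithmic part, we first compute a rooted spanning tree \((\mathcal T,r)\) of \(G\) by breadth-first search, together with an ordering of \(E(\mathcal T)\) in which each edge attaches one new vertex, and an auxiliary orientation of every edge in \(E_{\mathrm{cyc}}\). All of this takes time \(O(|V(G)|+|E(G)|)\). We then place the copies \(B_v\) of the fixed diagram of Figure~\ref{fig:borromean-vertex-block} in disjoint regions of the plane, say along a horizontal line in the order of a depth-first traversal of \(\mathcal T\). Each Type I or Type II move of Figure~\ref{fig:plumbing-moves} is a fixed local modification. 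It adds a bounded number of arcs and crossings between the two blocks involved, and it routes a connecting band from \(B_u\) to \(B_v\). If we route every such band through a horizontal strip assigned to that edge, each band crosses the other bands and the intervening blocks in at most \(O(|V(G)|+|E(G)|)\) points. The resulting diagram \(D_G\) therefore has \(O(|E(G)|(|V(G)|+|E(G)|))\) crossings, which is polynomial, and it can be written down explicitly as a planar diagram code (for example, a PD code with framings). Since every vertex block is \(0\)-framed and the moves add no framing changes, all components are \(0\)-framed. For a fixed vertex-block diagram, the number of components is at most \(3|V(G)|+|E_{\mathrm{cyc}}|\) times a fixed constant.

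For correctness, we first invoke \cite[Section~6.1]{GompfStipsicz1999}: the iterated local moves produce a Kirby diagram, with dotted circles for the \(1\)-handles, of the plumbed 4-manifold \(X_{\Gamma_G}\). The tree edges correspond to ordinary plumbings of disk bundles. Each cycle edge corresponds to the nonsimply connected plumbing move, which adds a \(1\)-handle together with a \(2\)-handle running over it. Next we use the standard fact \cite[Section~5.4]{GompfStipsicz1999} that if a dotted circle is replaced by a \(0\)-framed unknot, the boundary 3-manifold is unchanged. The two 4-manifolds differ by surgery in the interior, which swaps \(S^1\times D^3\) for \(D^2\times S^2\) and leaves the boundary fixed. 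Hence \(\partial X_{\Gamma_G}\) is orientation-preservingly diffeomorphic to \(S^3_{L_G}\). Combining this with Proposition~\ref{prop:MG-plumbing-model-v2}, which identifies \(\partial X_{\Gamma_G}\) with \(M_G\), gives \(S^3_{L_G}\cong M_G\). For orientations, we use the convention that a Kirby diagram in \(\partial B^4=S^3\) presents the boundary with its induced orientation. This matches the orientation of \(M_G\) fixed in Proposition~\ref{prop:MG-plumbing-model-v2}, provided the edge signs of \(\Gamma_G\) are all positive.

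The main obstacle is the bookkeeping in the Type II moves. For each cycle edge, the 1-handle and the attaching band of the new \(2\)-handle must be drawn so that the move realizes the intended plumbing (with positive sign) and not some other twisted version. Moreover, the global routing of these bands through the partially built diagram must not introduce unintended linking or twisting that changes framings. The plan here is to standardize the routing: every band runs as a flat ribbon with blackboard framing through its own strip. We then check that the linking matrix of \(L_G\) agrees with the expected one, namely the intersection form of the plumbing, which vanishes on the vertex classes since all Euler numbers are \(0\). Any residual discrepancy can be absorbed by an explicit local twist computed from crossing signs, which is a polynomial-time correction. With this convention fixed, the remaining claims reduce to the cited Gompf--Stipsicz rules and to counting crossings.
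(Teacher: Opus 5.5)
Your proposal follows the paper's proof: the same spanning-tree and orientation preprocessing, one fixed local picture per vertex and edge assembled with polynomially many crossings, the Gompf--Stipsicz Kirby diagram of the plumbing associated with $\Gamma_G$, replacement of dotted circles by $0$-framed unknots, and Proposition~\ref{prop:MG-plumbing-model-v2}; your justification of the dotted-circle replacement, by an interior surgery replacing $S^1\times D^3$ with $D^2\times S^2$ that leaves the boundary unchanged, is more explicit than the paper's. Drop the linking-matrix ``check'' with corrective twists in your last paragraph, since correctness comes from following the Gompf--Stipsicz rules, exactly as in the paper, and that check is unsound as stated: the intersection form of the plumbing on the vertex tori is the adjacency matrix of $G$ (only its diagonal vanishes); the linking matrix of $L_G$ is indexed by all its components, including the former dotted circles, so it is not that form; a linking matrix does not determine the surgered manifold; and inserting twists would either change the framed link or create local kinks, which the later use of $D_G$ in Proposition~\ref{prop:effective-triangulation-v2} rules out.
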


\begin{proof}
A spanning tree \(\mathcal T\subset G\), a root, and a traversal order of the
edges of \(\mathcal T\) can be found in time \(O(|V(G)|+|E(G)|)\), for example
by breadth-first search or depth-first search.  The remaining choices in the
construction are auxiliary orientations of the edges in
\(E(G)\setminus E(\mathcal T)\).  The construction then applies one fixed local
diagram for each vertex and each edge of \(G\).  These local diagrams can be
assembled in the plane with polynomially many crossings; hence the resulting
planar Kirby diagram, and therefore the framed-link diagram
\(D_G\), have size polynomial in \(|V(G)|+|E(G)|\).

Before replacing dotted circles, the diagram is the standard Kirby diagram for
the plumbing associated with \(\Gamma_G\).  By
Proposition~\ref{prop:MG-plumbing-model-v2}, the boundary of this plumbing is
orientation-preservingly diffeomorphic to \(M_G\).  Replacing all dotted
components by \(0\)-framed surgery components gives the planar framed-link
diagram \(D_G\).  The resulting framed link \(L_G\subset S^3\) has surgery
manifold equal to the boundary of the handlebody represented by the Kirby
diagram.  Therefore
\[
        S^3_{L_G}\cong M_G.
\]
\end{proof}

For the Turaev--Viro input we use ordered triangulations, that is,
triangulations whose finite vertex set is equipped with a total order.  We do
not need a triangulation adapted to the graph-manifold decomposition; a
polynomial-size triangulation follows from the surgery presentation of
Proposition~\ref{prop:effective-surgery-v2}.

\begin{proposition}
\label{prop:effective-triangulation-v2}
There is a polynomial-time algorithm which, given a finite connected simple graph
\(G\) with at least one edge, outputs an ordered triangulation \(t_G\) of
\(M_G\) whose size is polynomial in \(|V(G)|+|E(G)|\).
\end{proposition}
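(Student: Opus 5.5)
We will derive the triangulation from the surgery presentation of Proposition~\ref{prop:effective-surgery-v2}, using the standard fact that a planar framed-link diagram with \(c\) crossings and integer framings can be converted in polynomial time into a triangulation of the surgery manifold with \(O(c+\sum|\text{framings}|+\#\text{components})\) tetrahedra. Since all framings of \(L_G\) are \(0\) and \(D_G\) has polynomially many crossings, the output size is polynomial in \(|V(G)|+|E(G)|\).

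The construction has four steps. First, we run the algorithm of Proposition~\ref{prop:effective-surgery-v2} to obtain \(D_G\). Second, we build a triangulation of \(S^3\) in which \(L_G\) sits as a subcomplex. To do this, we place the diagram on a square grid of side polynomial in the number of crossings and realize the link as a polygonal curve in a cubulated box \([0,N]^2\times[0,2]\), with crossings resolved at heights \(0\) and \(2\). We then subdivide each cube into tetrahedra in a fixed pattern and cone the boundary of the box to a point to close up to \(S^3\). After one further barycentric subdivision, a regular neighbourhood \(N(L_G)\) is a subcomplex. Third, we record the \(0\)-framed longitude of each component on \(\partial N(L_G)\) as a simplicial curve. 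It is obtained from a push-off in the vertical blackboard direction, corrected by adding meridian twists equal to minus the writhe of that component, and the writhe is read off from the diagram. Fourth, we remove the interior of \(N(L_G)\) and glue in solid tori \(D^2\times S^1\), each triangulated with a bounded number of tetrahedra per boundary edge-segment, so that the meridian disk is attached along the longitude. The boundary triangulations must be matched, and we do this by layering tetrahedra or by using a compatible triangulated solid torus built along the component. The result is \(S^3_{L_G}\cong M_G\). Finally, a total order on the vertices (for instance, the enumeration order) makes it an ordered triangulation.

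The step I expect to need the most care is the surgery gluing: producing a triangulated solid torus whose boundary matches the given triangulation of \(\partial N(L_G)\) and whose meridian is the prescribed longitude, with size linear in the length of the component. The approach I would try first is to work component-wise along the polygonal knot. Over each segment of the neighbourhood \(D^2\times[0,1]\), we use a fixed triangulated product block, and the framing is implemented by composing with a twisting block wherever the writhe correction requires it. Because the total twist per component is bounded by the number of crossings, the number of twisting blocks is polynomial. Alternatively, one may cite the standard polynomial algorithms converting surgery diagrams to triangulations, as in Regina-type implementations or the constructions in the literature on algorithmic 3-manifold topology, and note only that their output size is polynomial in crossings and framings. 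All these operations are explicit local substitutions, so the total running time is polynomial.
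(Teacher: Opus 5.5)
Your proposal is correct and follows the paper's route: build \(D_G\) via Proposition~\ref{prop:effective-surgery-v2}, convert the \(0\)-framed surgery diagram into a triangulation whose size is linear in the crossing number plus the framing--writhe corrections, and then order the vertices arbitrarily. The paper does not construct that conversion by hand; it invokes \cite[Lemma~2.1]{Cha2018}, which gives at most \(96c+48\sum_i|f_i-w_i|\le 144c\) tetrahedra once one checks that \(D_G\) has no local kinks and that every component meets a crossing, hypotheses that your bound avoids because it includes a term for the number of components.
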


\begin{proof}
Let \(D_G\) be the planar framed-link diagram produced by
Proposition~\ref{prop:effective-surgery-v2}, and let \(L_G\) be the framed link
it represents.  Let \(c\) be the number of crossings of this displayed diagram.
By the construction in Proposition~\ref{prop:effective-surgery-v2}, \(c\) is
polynomial in \(|V(G)|+|E(G)|\), every component is \(0\)-framed, every
component is involved in a crossing, and \(D_G\) has no local kink.  Here a
local kink means a small one-crossing curl on a component, removable by a
Reidemeister-I move.

We use \cite[Lemma~2.1]{Cha2018}, which states that if a framed link has a
planar diagram with \(c\) crossings, no local kink, and every zero-framed
component involved in a crossing, and if \(w_i\) denotes the writhe of the
\(i\)-th component, then the surgery manifold admits a triangulation with at
most
\[
        96c+48\sum_i |f_i-w_i|
\]
tetrahedra, where \(f_i\) is the surgery framing.  The construction in the proof
of the lemma produces such a triangulation; in our case \(f_i=0\) for every
component, and
\[
        \sum_i |w_i|\leq c
\]
because each crossing contributes at most one unit to the writhe of one
component.  Thus the construction in \cite[Lemma~2.1]{Cha2018} gives a
triangulation of \(S^3_{L_G}\) with at most \(144c\) tetrahedra, and produces it
in polynomial time from \(D_G\).  Since \(S^3_{L_G}\cong M_G\), this is a
polynomial-size triangulation of \(M_G\).  Choose any total order on its finite
vertex set.
\end{proof}

\section{The RT formula for the graph manifolds}
\label{sec:rt-formula-v2}

\subsection{TQFT notation and gluing}
\label{subsec:tqft-notation-gluing-v2}

We use the anomaly-free Reshetikhin--Turaev TQFT associated with \(\C\), with
the convention that its value on a closed 3-manifold agrees with the surgery
normalization recalled in Section~\ref{subsec:rt-tv-evaluation-problems-v2}. We
recall only the notation needed for the calculation; for the full
construction see \cite[Chapter~III, Section~1 and Chapter~IV,
Sections~1.1--1.8]{TuraevBook}.
The terminology of decorated surfaces, opposite surfaces, items, decorated
cobordisms, and cylinders is that of
\cite[Chapter~IV, Sections~1.1--1.6]{TuraevBook}.

An \defnfont{item} is a pair \((W,\varepsilon)\), where \(W\) is an object of
\(\C\) and \(\varepsilon\in\{+1,-1\}\). A \defnfont{decorated surface} is a
closed oriented surface together with a finite ordered family of pairwise
disjoint oriented arcs, each equipped with an item. The unmarked case is allowed
and will be used for the boundary tori. A decorated homeomorphism preserves the
orientation, the marked arcs, their order, and their items. If \(\Sigma\) is a
decorated surface, then \(-\Sigma\) denotes the opposite decorated surface:
the orientation of the underlying surface and of every marked arc is reversed,
and each item \((W,\varepsilon)\) is replaced by \((W,-\varepsilon)\).
We write
\[
W^{+1}=W,\qquad W^{-1}=W^*.
\]
The label \(W\) itself is therefore not replaced by a dual object when the opposite
surface is formed; the dual object enters through the sign of the item.

A \defnfont{parametrized decorated surface} is a decorated surface together
with a chosen decorated homeomorphism from a model surface of the same type.
For the unmarked torus this is the parametrization fixed in
Section~\ref{subsec:local-blocks-v2},
\[
p_h:T^2_{\mathrm{std}}\longrightarrow T_h.
\]
We use this parametrization to identify the state space of \(T_h\) with the
state space of the standard torus.

To a parametrized decorated surface \(\Sigma\), the TQFT assigns a
finite-dimensional vector space
\[
Z_\C(\Sigma).
\]
The opposite surface is dual in the TQFT sense: there is a non-degenerate
evaluation pairing
\[
d_\Sigma:Z_\C(\Sigma)\otimes Z_\C(-\Sigma)\longrightarrow k.
\]

A \defnfont{decorated cobordism}
\[
W:\Sigma_-\longrightarrow\Sigma_+
\]
consists of a compact oriented 3-manifold, an identification of its boundary
with
\[
(-\Sigma_-)\sqcup \Sigma_+,
\]
and a \(\C\)-colored ribbon graph in the 3-manifold. Here a ribbon graph
means a framed embedded graph with colored oriented bands and coupon vertices;
edge colors are objects of \(\C\), and coupon colors are morphisms in \(\C\).
The free boundary ends of the ribbon graph lie on the marked arcs of
\(\Sigma_-\) and \(\Sigma_+\), with labels matching the corresponding items.
When no ribbon graph is present, we suppress it from the notation.

Fix Lagrangians on the reference boundary tori and give each local cobordism
weight zero. For the gluings below the corresponding
Maslov corrections are trivial; since \(\C\) is anomaly-free, no further anomaly
factor appears in the gluing or trace formulas. See
\cite[Chapter~IV, Sections~8--9]{TuraevBook}.

The anomaly-free RT theory is a symmetric monoidal functor from this decorated
cobordism category to finite-dimensional \(k\)-vector spaces. Hence a decorated
cobordism \(W:\Sigma_-\to\Sigma_+\) gives a linear map
\[
Z_\C(W):Z_\C(\Sigma_-)\longrightarrow Z_\C(\Sigma_+).
\]
In particular, a cobordism \(W:\varnothing\to\Sigma\) gives a vector
\[
Z_\C(W)\in Z_\C(\Sigma),
\]
and a closed 3-manifold gives a scalar. The monoidal structure identifies
\[
Z_\C(\Sigma_1\sqcup\Sigma_2)
\cong
Z_\C(\Sigma_1)\otimes Z_\C(\Sigma_2),
\]
and similarly for finite disjoint unions.

If \(W_1:\Sigma_0\to\Sigma_1\) and \(W_2:\Sigma_1\to\Sigma_2\) are composable
cobordisms, then
\[
Z_\C(W_2\circ W_1)=Z_\C(W_2)\circ Z_\C(W_1).
\]
The decorated cylinder \(\Sigma\times[0,1]\) represents the identity map on
\(Z_\C(\Sigma)\).

If \(f:\Sigma\to\Sigma'\) is a decorated homeomorphism, functoriality applied to
the mapping cylinder of \(f\) gives an isomorphism
\[
f_\#:Z_\C(\Sigma)\longrightarrow Z_\C(\Sigma').
\]
For unmarked tori, orientation-preserving homeomorphisms are decorated
homeomorphisms. In formulas below we use
the maps \((p_h)_\#\) to transport the standard torus basis to \(Z_\C(T_h)\),
and the inverse maps to identify \(Z_\C(T_h)\) with
\(Z_\C(T^2_{\mathrm{std}})\). After these identifications are fixed, we
suppress the parametrization maps from the notation.

We use the following RT gluing formula. Suppose
\[
W_1:\varnothing\to \Sigma,
\qquad
W_2:\varnothing\to \Sigma',
\]
and suppose that \(f:\Sigma\to-\Sigma'\) is the boundary homeomorphism used to
glue \(W_1\) to \(W_2\). Then
\begin{equation}
\label{eq:tqft-gluing-pairing-v2}
Z_\C(W_1\cup_f W_2)
=
d_{\Sigma'}\bigl(Z_\C(W_2)\otimes f_\#Z_\C(W_1)\bigr).
\end{equation}
More generally, when several boundary components are glued, the invariant is
obtained by applying this pairing independently on each glued pair of boundary
state spaces. We also use the trace formula for products: if \(X\) is a
decorated closed surface, then
\begin{equation}
\label{eq:tqft-trace-product-v2}
Z_\C(X\times S^1)=\dim Z_\C(X).
\end{equation}
More generally, if \(f:X\to X\) is a decorated homeomorphism and \(M_f\) is its
mapping torus, then
\begin{equation}
\label{eq:tqft-mapping-torus-trace-v2}
Z_\C(M_f)=\operatorname{Tr}(f_\#).
\end{equation}
These conventions are consequences of the RT TQFT axioms; see
\cite[Chapter~III, Theorems~2.1.1--2.1.3 and Chapter~IV,
Sections~1 and~5]{TuraevBook}.

\subsection{Torus states and local factors}
\label{subsec:tqft-input-v2}

Let
\[
\mathcal H_T=Z_\C(T^2_{\mathrm{std}})
\]
be the state space of the standard torus. We use the basis
\[
\{e_i\}_{i\in I}
\]
of \(\mathcal H_T\), defined as follows. View \(T^2_{\mathrm{std}}\) as the
boundary of the standard solid torus \(D^2\times S^1\). The vector \(e_i\) is the
state of the cobordism
\[
D^2\times S^1:\varnothing\longrightarrow T^2_{\mathrm{std}}
\]
with the core circle \(\{0\}\times S^1\) colored by \(V_i\). Under the standard
identification
\[
\mathcal H_T
\cong
\Hom_\C\left(\mathbf 1,\bigoplus_{j\in I} V_j\otimes V_j^*\right),
\]
this state corresponds to the coevaluation
\(\mathbf 1\to V_i\otimes V_i^*\) in the \(i\)-summand and to zero in the other
summands. Let
\[
\{e^i\}_{i\in I}\subset Z_\C(-T^2_{\mathrm{std}})
\]
be the dual basis, normalized by
\[
d_{T^2_{\mathrm{std}}}(e_i\otimes e^j)=\delta_{ij}.
\]
Geometrically, \(e^i\) is represented by the dual solid-torus state
corresponding to \(e_i\). We use the algebraic normalization above throughout.
With respect to this basis, the modular transformation
\[
s=
\begin{pmatrix}
0&-1\\
1&0
\end{pmatrix}
\]
acts by
\begin{equation}
\label{eq:torus-s-action-v2}
s_\#(e_j)=\sum_{i\in I}\frac{S_{ij}}{\D}e_i.
\end{equation}
Equivalently, let \(T,T'\) be parametrized boundary tori with coordinates
\[
p:T^2_{\mathrm{std}}\longrightarrow T,
\qquad
q:T^2_{\mathrm{std}}\longrightarrow T',
\]
and set \(q^-:=q\circ\rho:T^2_{\mathrm{std}}\to -T'\). For
\[
g=q^-\circ s\circ p^{-1}:T\longrightarrow -T',
\]
the induced map
\[
g_\#:Z_\C(T)\longrightarrow Z_\C(-T')
\]
sends the standard basis to the dual basis by the same matrix, with the label
dualized by the chosen opposite-torus parametrization:
\begin{equation}
\label{eq:edge-gluing-action-v2}
g_\#(e_j)=\sum_{i\in I}\frac{S_{ij}}{\D}e^{i^*}.
\end{equation}
See \cite[Chapter~IV, Section~5]{TuraevBook}.

\medskip
\noindent\emph{Vertex states.}

For a boundary torus \(T_h\) of a block \(P_v\), the parametrization
\[
p_h:T^2_{\mathrm{std}}\longrightarrow T_h
\]
transports the basis of \(\mathcal H_T\) to \(Z_\C(T_h)\). After this
identification, we regard \(Z_\C(T_h)\) as \(\mathcal H_T\). Recall that
\[
P_v=\Sigma_v\times S^1
\]
and
\[
\partial P_v=\bigsqcup_{h\in H(v)}T_h.
\]
By monoidality and the chosen torus parametrizations,
\[
Z_\C(\partial P_v)
\cong
\bigotimes_{h\in H(v)}Z_\C(T_h)
\cong
\bigotimes_{h\in H(v)}\mathcal H_T.
\]
Since \(P_v\) is a cobordism
\[
P_v:\varnothing\longrightarrow \bigsqcup_{h\in H(v)}T_h,
\]
it defines a vector
\[
Z_\C(P_v)\in \bigotimes_{h\in H(v)}\mathcal H_T.
\]
For a labeling \((a_h)_{h\in H(v)}\in I^{H(v)}\), define
\[
A_{\deg(v)}((a_h)_{h\in H(v)})
\]
by the expansion
\begin{equation}
\label{eq:vertex-state-expansion-v2}
Z_\C(P_v)
=
\sum_{(a_h)_{h\in H(v)}\in I^{H(v)}}
A_{\deg(v)}((a_h)_{h\in H(v)})
\bigotimes_{h\in H(v)} e_{a_h}.
\end{equation}

\medskip
\noindent\emph{The vertex coefficient.}

For \(r\geq1\), let \(A_r(a_1,\ldots,a_r)\) denote the coefficient defined by
\eqref{eq:vertex-state-expansion-v2} for the model block
\[
\Sigma_{1,r}\times S^1,
\qquad
\partial(\Sigma_{1,r}\times S^1)=\bigsqcup_{j=1}^r T_j,
\]
with boundary labels \(a_1,\ldots,a_r\).

\begin{proposition}
\label{prop:vertex-coefficient-v2}
For \(r\geq1\) and \(a_1,\ldots,a_r\in I\),
\[
A_r(a_1,\ldots,a_r)
=
\sum_{i\in I}d_i^{-r}\prod_{j=1}^r S_{i a_j}.
\]
\end{proposition}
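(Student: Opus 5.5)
We compute the coefficient \(A_r(a_1,\ldots,a_r)\) by pairing \(Z_\C(\Sigma_{1,r}\times S^1)\) against dual basis vectors, that is, by capping off the boundary tori. By the dual-basis normalization \(d_{T^2_{\mathrm{std}}}(e_i\otimes e^j)=\delta_{ij}\), the coefficient \(A_r(a_1,\ldots,a_r)\) equals the value of the closed manifold obtained by gluing to each boundary torus \(T_j\) a solid torus representing \(e^{a_j}\). Geometrically, this is the gluing formula \eqref{eq:tqft-gluing-pairing-v2} applied componentwise. We glue each cap along the parametrization \(p_j\), so that its meridian goes to \(\mu_j\) and its core is parallel to \(\lambda_j\); the core carries color \(V_{a_j}\), with orientation or dualization fixed by the opposite-torus convention. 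The outcome is the closed manifold \(\Sigma_1\times S^1\), with \(\Sigma_1\) the closed torus, containing \(r\) vertical fiber circles \(\{x_j\}\times S^1\) colored \(V_{a_j}\) (possibly dualized).

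Next we evaluate this by the trace formula \eqref{eq:tqft-trace-product-v2}. Here \(X\) is the decorated torus with \(r\) marked points labeled \(a_j\) with signs, so \(A_r=\dim Z_\C(X)\). By the standard Verlinde dimension formula for genus one with \(r\) marked points (\cite[Chapter~IV]{TuraevBook}),
\[
\dim Z_\C(X)=\sum_{i\in I}\Bigl(\frac{\D}{d_i}\Bigr)^{2g-2+r}\prod_{j}\frac{S_{ia_j}}{\D},
\]
and at \(g=1\) the powers of \(\D\) cancel, giving \(\sum_i d_i^{-r}\prod_j S_{ia_j}\).

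To avoid quoting Verlinde with signs, an alternative is to compute \(\dim Z_\C(X)\) directly. Cut the torus into an annulus; then \(Z_\C(X)\cong\bigoplus_{i}\Hom(\mathbf 1, V_i\otimes V_{a_1}^{\pm}\otimes\cdots\otimes V_{a_r}^{\pm}\otimes V_i^*)\), whose dimension is \(\sum_i N\)-type multiplicities. These are rewritten via the Verlinde formula \eqref{eq:verlinde-formula-v2} and \(\sum_i S_{ix}S_{i^*y}\) orthogonality from \eqref{eq:basic-modular-identities-v2} into \(\sum_i d_i^{-r}\prod_j S_{ia_j}\). The computation is an iteration: \(\dim\Hom(\mathbf 1,V_i\otimes Y\otimes V_i^*)\) summed over \(i\) equals \(\sum_b N_{Y}^{b}\cdot\#\{\dots\}\), and the fact that \(\frac{S_{\cdot a}}{d_\cdot}\) are characters of the fusion ring gives the product \(\prod_j S_{ia_j}/d_i\) together with \(\sum_i (S_{0i}/1)^2/\D^2\)-type factors, which combine to the claimed formula. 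This uses anomaly-freeness: \(\D^2=\Dim(\C)\).

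The main obstacle is bookkeeping of duals and signs. The cap representing \(e^{a_j}\) may carry \(V_{a_j}^*\) rather than \(V_{a_j}\), which would produce \(S_{ia_j^*}\). We resolve this by tracking the opposite-torus convention \(p_h^-=p_h\circ\rho\) used in \eqref{eq:edge-gluing-action-v2}: since \(e^i\) is defined as the algebraic dual and pairs with \(e_i\) (a \(V_i\)-colored core), the glued core is colored so that the marked point in \(X\) carries the item yielding \(S_{ia_j}\). If a residual dual appears, we absorb it by reindexing \(i\mapsto i^*\) in the sum, using \(S_{i^*a^*}=S_{ia}\) and \(d_{i^*}=d_i\); this makes the formula insensitive to a uniform choice of dualization. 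A secondary check is that no framing or Maslov correction arises, which holds by the weight-zero convention stated in Subsection~\ref{subsec:tqft-notation-gluing-v2}.
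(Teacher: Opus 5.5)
Your proposal is correct and follows essentially the paper's argument: you cap each boundary torus along \(p_j\) by the solid torus representing \(e^{a_j}\), identify the result with \(\overline\Sigma_1\times S^1\) carrying vertical colored annuli (the identity mapping torus of the decorated genus-one surface), apply the trace formula, and invoke the genus-one Verlinde dimension formula. Your reindexing \(i\mapsto i^*\), using \(S_{i^*a^*}=S_{ia}\) and \(d_{i^*}=d_i\), is a tidy way to make a uniform dualization of the items harmless, whereas the paper simply orients the arcs so that the items are \((V_{a_j},+1)\).
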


\begin{proof}
By definition, \(A_r(a_1,\ldots,a_r)\) is obtained by pairing the boundary state
of \(\Sigma_{1,r}\times S^1\) with the tensor
\[
e^{a_1}\otimes\cdots\otimes e^{a_r}
\in
\bigotimes_{j=1}^r Z_\C(-T_j).
\]
Indeed, compatibility of \(d_\Sigma\) with disjoint union gives
\[
d_{\partial(\Sigma_{1,r}\times S^1)}
\left(
(e_{b_1}\otimes\cdots\otimes e_{b_r})
\otimes
(e^{a_1}\otimes\cdots\otimes e^{a_r})
\right)
=
\prod_{j=1}^r\delta_{a_j,b_j}.
\]
Topologically, the pairing fills each boundary torus \(T_j=C_j\times S^1\) by
the dual solid torus representing \(e^{a_j}\). The underlying closed
3-manifold after all fillings is
\[
\left(\Sigma_{1,r}\cup_{\bigsqcup_j C_j}\bigsqcup_{j=1}^r D_j^2\right)\times S^1
\cong
\overline\Sigma_1\times S^1,
\]
where \(\overline\Sigma_1\) is the closed genus-one surface obtained by capping
the boundary components of \(\Sigma_{1,r}\). The colored ribbon in the \(j\)-th
filling solid torus may be represented as the annulus
\(\alpha_j\times S^1\), where \(\alpha_j\subset D_j^2\) is an arc through the
center of the capping disk. The arc is oriented so that the boundary item
induced by the dual solid-torus state is \((V_{a_j},+1)\). After the capping
disk is attached, \(\alpha_j\) becomes a marked arc on \(\overline\Sigma_1\)
with this item. Hence the filled cobordism is the identity mapping torus of the
decorated genus-one surface with marked arcs colored by
\(V_{a_1},\ldots,V_{a_r}\).

Let
\[
Z_\C(\overline\Sigma_1;V_{a_1},\ldots,V_{a_r})
\]
denote the state space of that decorated closed genus-one surface. The trace
formula for products with \(S^1\) gives
\[
A_r(a_1,\ldots,a_r)
=
\dim_k Z_\C(\overline\Sigma_1;V_{a_1},\ldots,V_{a_r}).
\]
The genus-one Verlinde dimension formula gives
\begin{equation}
\label{eq:genus-one-verlinde-dimension-v2}
\dim_k Z_\C(\overline\Sigma_1;V_{a_1},\ldots,V_{a_r})
=
\sum_{i\in I}d_i^{-r}\prod_{j=1}^r S_{a_j i}.
\end{equation}
See \cite[Chapter~III, Theorem~2.1.3 and Chapter~IV,
Theorem~12.1.1]{TuraevBook}.
\end{proof}

For a vertex \(v\) we write \(A_{\deg(v)}((a_h)_{h\in H(v)})\) for the
corresponding function of the incident labels.

\medskip
\noindent\emph{Edge contractions.}

Let \(e=\{u,v\}\) be an edge of \(G\), and write
\[
h=h_{u,e},
\qquad
\bar h=h_{v,e}.
\]
For \(g_e\) as in \eqref{eq:graph-edge-gluing-map-v2}, if the two half-edges
\(h,\bar h\) are labeled by \(a,b\in I\), then
\eqref{eq:edge-gluing-action-v2} gives
\[
(g_e)_\#(e_a)
=
\sum_{c\in I}\frac{S_{ca}}{\D}e^{c^*}
\in Z_\C(-T_{\bar h}).
\]
Pairing with \(e_b\in Z_\C(T_{\bar h})\) gives the edge contraction
\begin{equation}
\label{eq:edge-contraction-v2}
d_{T_{\bar h}}\bigl(e_b\otimes (g_e)_\#e_a\bigr)
=
\frac{S_{a,b^*}}{\D}.
\end{equation}

\subsection{The half-edge state sum}
\label{subsec:half-edge-state-sum-v2}

The preceding local computations determine the invariant before the half-edge
labels are summed out. A half-edge labeling records the basis vector chosen on
each boundary torus of the disjoint union of vertex blocks; the edge gluings then
pair the corresponding two torus factors by \eqref{eq:edge-contraction-v2}.

\begin{proposition}
\label{prop:half-edge-state-sum-v2}
Let \(G\) be a finite connected simple graph with at least one edge. Then
\begin{equation}
\label{eq:half-edge-state-sum-v2}
\begin{aligned}
Z_\C(M_G)
&=
\D^{-|E(G)|}
\sum_{\ell:H(G)\to I}
\left(
\prod_{v\in V(G)}
A_{\deg(v)}((\ell(h))_{h\in H(v)})
\right)\\
&\qquad\qquad\cdot
\left(
\prod_{e=\{u,v\}\in E(G)}
S_{\ell(h_{u,e}),\ell(h_{v,e})^*}
\right),
\end{aligned}
\end{equation}
\end{proposition}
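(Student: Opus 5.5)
The plan is to compute \(Z_\C(M_G)\) by the gluing formula \eqref{eq:tqft-gluing-pairing-v2}, applied to all edges simultaneously. View \(M_G\) as the disjoint union \(P_G^\circ=\bigsqcup_v P_v\), regarded as a cobordism \(\varnothing\to\bigsqcup_{h\in H(G)}T_h\), with boundary tori identified in pairs by the maps \(g_e\).

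First, by monoidality the state of \(P_G^\circ\) is the tensor product of the vertex states. Expanding each factor with \eqref{eq:vertex-state-expansion-v2} and Proposition~\ref{prop:vertex-coefficient-v2}, a choice of one basis index per half-edge gives
\[
Z_\C(P_G^\circ)=\sum_{\ell:H(G)\to I}\Bigl(\prod_{v}A_{\deg(v)}((\ell(h))_{h\in H(v)})\Bigr)\bigotimes_{h\in H(G)}e_{\ell(h)}.
\]
The tensor factors can be regrouped edge by edge as \(\bigotimes_{e}\bigl(Z_\C(T_{h_{u,e}})\otimes Z_\C(T_{h_{v,e}})\bigr)\). This uses the symmetric monoidal structure; the only requirement is to keep track of which factor sits on which torus.

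Next, apply the multi-component form of the gluing formula. For each edge \(e=\{u,v\}\), with \(h=h_{u,e}\) and \(\bar h=h_{v,e}\), the closed invariant is obtained by applying, on the \(e\)-th pair of factors, the functional
\[
x\otimes y\mapsto d_{T_{\bar h}}\bigl(y\otimes (g_e)_\#x\bigr).
\]
By linearity this reduces to evaluating the functional on the basis tensor \(e_{\ell(h)}\otimes e_{\ell(\bar h)}\). By \eqref{eq:edge-contraction-v2} its value is \(S_{\ell(h),\ell(\bar h)^*}/\D\). Multiplying over the edges produces the factor \(\D^{-|E(G)|}\) and the edge product in \eqref{eq:half-edge-state-sum-v2}. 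Summing over \(\ell\) then gives the stated formula.

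The main obstacle is justifying that the simultaneous multi-torus gluing is really given by independent pairings, with no extra scalar. There are two sources of trouble.
\begin{itemize}
\item Gluing along several tori at once when the pieces are not separated, for example along cycle edges of \(G\). This is not the two-piece setting of \eqref{eq:tqft-gluing-pairing-v2}.
\item Possible Maslov or anomaly corrections.
\end{itemize}
I would handle this by gluing one edge at a time. Each step is either a gluing of two distinct components, which is \eqref{eq:tqft-gluing-pairing-v2} tensored with the identity on the other boundary factors, or a self-gluing of a single connected cobordism along two of its boundary tori. In the self-gluing case I would use the RT contraction axiom: gluing \(T\) to \(-T'\) inside one cobordism corresponds to contracting the corresponding tensor factors by the pairing. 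This follows from functoriality by writing the self-gluing as composition with a cylinder bent into an elbow, i.e.\ the evaluation cobordism \(T\sqcup -T\to\varnothing\), whose value is \(d_T\). The anomaly-free hypothesis and the weight-zero convention already fixed in Subsection~\ref{subsec:tqft-notation-gluing-v2} ensure that no \(\alpha_\C\)-powers appear at any step. The order of the edges is then immaterial.

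A secondary point is consistency of orientations. The edge functional is defined by pairing \(e_{\ell(\bar h)}\in Z_\C(T_{\bar h})\) against \((g_e)_\#e_{\ell(h)}\in Z_\C(-T_{\bar h})\). This matches the direction \(g_e:T_h\to -T_{\bar h}\) fixed in \eqref{eq:graph-edge-gluing-map-v2}. So each edge contributes exactly \eqref{eq:edge-contraction-v2}, and the asymmetry between \(h\) and \(\bar h\) is reflected in the dual index \(\ell(h_{v,e})^*\).
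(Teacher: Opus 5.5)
Your proposal is correct and follows essentially the same route as the paper: tensor the vertex states, expand over half-edge labelings $\ell$, and contract each edge pair via \eqref{eq:edge-contraction-v2}, so that each edge contributes $S_{\ell(h_{u,e}),\ell(h_{v,e})^*}/\D$. Your edge-by-edge justification of the non-separating (cycle-edge) gluings via the evaluation cylinder elaborates what the paper simply takes as its multi-component gluing convention. Citing Proposition~\ref{prop:vertex-coefficient-v2} is unnecessary here, since only the defining expansion \eqref{eq:vertex-state-expansion-v2} of $A_{\deg(v)}$ is used.
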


\begin{proof}
Let
\[
P_G^\circ=\bigsqcup_{v\in V(G)}P_v
\]
be the disjoint union of the vertex blocks. By construction, \(M_G\) is obtained
from \(P_G^\circ\) by gluing, for every edge \(e=\{u,v\}\), the torus
\[
T_{h_{u,e}}
\]
to
\[
-T_{h_{v,e}}
\]
using the maps \(g_e\) defined in \eqref{eq:graph-edge-gluing-map-v2}. The
boundary state of \(P_G^\circ\) is the tensor
product of the vertex states:
\[
Z_\C(P_G^\circ)=\bigotimes_{v\in V(G)}Z_\C(P_v)
\in
\bigotimes_{h\in H(G)}\mathcal H_T,
\]
after the identifications fixed above.

A half-edge labeling is a function \(\ell:H(G)\to I\). It assigns a
simple label to each boundary torus of the disjoint union \(P_G^\circ\), before
any edge gluing is performed. Taking the tensor product of
\eqref{eq:vertex-state-expansion-v2} over all vertices gives
\[
Z_\C(P_G^\circ)
=
\sum_{\ell:H(G)\to I}
\left(
\prod_{v\in V(G)}
A_{\deg(v)}((\ell(h))_{h\in H(v)})
\right)
\bigotimes_{h\in H(G)}e_{\ell(h)}.
\]
The gluing formula \eqref{eq:tqft-gluing-pairing-v2} is then applied to each
edge torus. Fix an edge \(e=\{u,v\}\), and write
\[
h=h_{u,e},
\qquad
\bar h=h_{v,e}.
\]
By \eqref{eq:edge-contraction-v2}, with \(a=\ell(h)\) and
\(b=\ell(\bar h)\),
\[
d_{T_{\bar h}}\bigl(e_{\ell(\bar h)}\otimes (g_e)_\#e_{\ell(h)}\bigr)
=
\frac{S_{\ell(h),\ell(\bar h)^*}}{\D},
\]
so every edge contributes one factor
\(S_{\ell(h_{u,e}),\ell(h_{v,e})^*}/\D\). Multiplying these
contributions over all edges gives the factor \(\D^{-|E(G)|}\) and the product
of the corresponding \(S\)-matrix entries.
\end{proof}

\subsection{Eliminating the half-edge labels}
\label{subsec:graph-manifold-state-sum-v2}

The half-edge formula still contains labels on boundary tori which depend on the
chosen half-edge notation. The final form of the invariant is obtained by summing
these labels edge by edge, using the square of the \(S\)-matrix.

\begin{theorem}
\label{thm:graph-manifold-rt-formula-v2}
Let \(G\) be a finite connected simple graph with at least one edge. Then
\[
Z_\C(M_G)
=
\D^{|E(G)|}
\sum_{\sigma:V(G)\to I}
\left(
\prod_{v\in V(G)}
d_{\sigma(v)}^{-\deg(v)}
\right)
\left(
\prod_{e=\{u,v\}\in E(G)}
S_{\sigma(u),\sigma(v)}
\right).
\]
\end{theorem}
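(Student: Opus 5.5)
Starting from Proposition~\ref{prop:half-edge-state-sum-v2}, substitute the vertex coefficient of Proposition~\ref{prop:vertex-coefficient-v2} into each factor \(A_{\deg(v)}\). Each vertex \(v\) then contributes
\[
\sum_{\sigma(v)\in I} d_{\sigma(v)}^{-\deg(v)}\prod_{h\in H(v)} S_{\sigma(v),\ell(h)} .
\]
Pull the sums over \(\sigma(v)\) outside. This gives
\[
Z_\C(M_G)=\D^{-|E(G)|}\sum_{\sigma:V(G)\to I}\Bigl(\prod_{v}d_{\sigma(v)}^{-\deg(v)}\Bigr)\sum_{\ell:H(G)\to I}\prod_{e=\{u,v\}}S_{\sigma(u),\ell(h_{u,e})}\,S_{\ell(h_{u,e}),\ell(h_{v,e})^*}\,S_{\ell(h_{v,e}),\sigma(v)},
\]
where the symmetry \(S_{ij}=S_{ji}\) from \eqref{eq:basic-modular-identities-v2} has been used to orient the factors. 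Every half-edge label occurs only in the factors attached to its own edge, so for fixed \(\sigma\) the inner sum over \(\ell\) factors as a product over edges of a sum over the two labels \(a=\ell(h_{u,e})\) and \(b=\ell(h_{v,e})\).

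For a single edge with endpoint labels \(x=\sigma(u)\) and \(y=\sigma(v)\), the quantity to compute is
\[
\sum_{a,b\in I}S_{xa}\,S_{a,b^*}\,S_{by}.
\]
First substitute \(b\mapsto b^*\), which is a bijection of \(I\), and use \(S_{b^*y}=S_{b,y^*}\). This turns the sum into \((S^3)_{x,y^*}\). Next apply \(S^2=\D^2\mathsf C\) from \eqref{eq:basic-modular-identities-v2}, using \(\D^2=\Dim(\C)\) in the anomaly-free case. This gives
\[
(S^3)_{x,y^*}=\D^2\,(\mathsf C S)_{x,y^*}=\D^2 S_{x^*,y^*}.
\]
Finally \(S_{x^*,y^*}=S_{x,y}\) by two applications of \(S_{i,j^*}=S_{i^*,j}\). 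So each edge contributes \(\D^2S_{\sigma(u)\sigma(v)}\).

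Multiplying over the edges gives an overall factor \(\D^{2|E(G)|}\). Combined with the prefactor \(\D^{-|E(G)|}\), this leaves \(\D^{|E(G)|}\), which is the claimed formula.

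The main obstacle I expect is the bookkeeping of duals and signs. Its delicate part is the edge sum: one must check that the \(b^*\) in the edge contraction, together with the charge conjugation coming from \(S^2\), produces exactly \(S_{xy}\) and not \(S_{x,y^*}\). If the conjugations did not cancel, the resulting matrix would still be symmetric, but it would not be the one claimed. I would verify this step on a pointed example, such as \(\Vec_{\mathbb Z/3}\) with a modular braiding, where \(S_{x,y^*}\neq S_{xy}\), to catch any parity slip.
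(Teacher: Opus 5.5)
Your proposal is correct and follows the paper's proof. You substitute the vertex coefficients, expand the vertex sums into a labeling \(\sigma\), factor the half-edge sum over edges, and evaluate each edge sum using \(S^2=\D^2\mathsf C\) and the duality symmetries of \(S\). The only cosmetic difference is in the edge sum: you write it as \((S^3)_{x,y^*}\) and then apply \(S^2=\D^2\mathsf C\), whereas the paper sums over \(a\) first to get \((S^2)_{x,b^*}=\D^2\delta_{x,b}\). Both routes give \(\D^2S_{xy}\).
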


\begin{proof}
We use the modular-data identity
\begin{equation}
\label{eq:square-S-for-elimination-v2}
S^2=\D^2\mathsf C,
\qquad
\mathsf C_{ij}=\delta_{i,j^*},
\end{equation}
or equivalently
\[
\sum_{a\in I}S_{ia}S_{aj}=\D^2\delta_{i,j^*}.
\]
Substituting Proposition~\ref{prop:vertex-coefficient-v2} into
Proposition~\ref{prop:half-edge-state-sum-v2} gives
\[
\begin{aligned}
Z_\C(M_G)
&=
\D^{-|E(G)|}
\sum_{\ell:H(G)\to I}
\prod_{v\in V(G)}
\left(
\sum_{i_v\in I}
d_{i_v}^{-\deg(v)}
\prod_{h\in H(v)}S_{i_v,\ell(h)}
\right)\\
&\qquad\qquad\cdot
\prod_{e=\{u,v\}\in E(G)}
S_{\ell(h_{u,e}),\ell(h_{v,e})^*}.
\end{aligned}
\]
Expanding the product of vertex sums introduces one label at each vertex; we
index these choices by a vertex labeling
\[
\sigma:V(G)\longrightarrow I.
\]
Reindexing the resulting finite sum first by \(\sigma\) and then by the
half-edge labeling \(\ell\) gives
\[
\begin{aligned}
Z_\C(M_G)
&=
\D^{-|E(G)|}
\sum_{\sigma:V(G)\to I}
\left(
\prod_{v\in V(G)}
d_{\sigma(v)}^{-\deg(v)}
\right)\\
&\qquad\cdot
\sum_{\ell:H(G)\to I}
\left(
\prod_{v\in V(G)}
\prod_{h\in H(v)}
S_{\sigma(v),\ell(h)}
\right)
\left(
\prod_{e=\{u,v\}\in E(G)}
S_{\ell(h_{u,e}),\ell(h_{v,e})^*}
\right).
\end{aligned}
\]
For fixed \(\sigma\), the remaining sum over half-edge labelings factors over
the edges. Indeed, each value \(\ell(h)\) appears once in a vertex factor and
once in the edge factor corresponding to the unique edge containing \(h\). For
an edge \(e=\{u,v\}\), the contribution of the two values of \(\ell\) on its
half-edges, denoted locally by \(a\) and \(b\), is the following two-variable
sum:
\begin{equation}
\label{eq:single-edge-elimination-v2}
\begin{aligned}
\sum_{a,b\in I}
S_{\sigma(u),a}\,S_{a,b^*}\,S_{\sigma(v),b}
&=
\sum_{b\in I}
\left(
\sum_{a\in I}S_{\sigma(u),a}S_{a,b^*}
\right)
S_{\sigma(v),b}\\
&=
\sum_{b\in I}
(S^2)_{\sigma(u),b^*}S_{\sigma(v),b}\\
&=
\sum_{b\in I}
\D^2\delta_{\sigma(u),b}S_{\sigma(v),b}\\
&=
\D^2 S_{\sigma(v),\sigma(u)}\\
&=
\D^2 S_{\sigma(u),\sigma(v)}.
\end{aligned}
\end{equation}
Applying \eqref{eq:single-edge-elimination-v2} independently to every edge,
the half-edge sum becomes
\[
\prod_{e=\{u,v\}\in E(G)}
\D^2 S_{\sigma(u),\sigma(v)}.
\]
Together with the prefactor \(\D^{-|E(G)|}\), this gives the power
\[
\D^{2|E(G)|}\D^{-|E(G)|}=\D^{|E(G)|},
\]
which proves the displayed formula.
\end{proof}

\section{The anomaly-free case}
\label{sec:anomaly-free-case-v2}

We combine the graph-manifold formula with the complexity dichotomy for
weighted graph homomorphism partition functions. After the vertex factors in
Theorem~\ref{thm:graph-manifold-rt-formula-v2} are normalized, the remaining
edge weights form a fixed matrix depending only on the modular category.

Let \(I=\Irr(\C)\). Define the symmetric matrix
\[
A_\C=(A_\C(i,j))_{i,j\in I},
\qquad
A_\C(i,j)=\frac{S_{ij}}{d_i d_j}.
\]
By the algebraicity discussion in Subsection~\ref{subsec:modular-tensor-categories-v2}, after
choosing the number field containing the relevant scalars, \(A_\C\) is a matrix
over that field.
Moreover
\[
A_\C(0,j)=A_\C(i,0)=1
\]
for all \(i,j\in I\).

Assume that \(\C\) is anomaly-free, and write
\(\D_\C=\Delta_+(\C)=\Delta_-(\C)\). The graph-manifold formula can be
rewritten as a graph partition function. For an edge \(e=\{u,v\}\),
\[
S_{\sigma(u),\sigma(v)}
=
d_{\sigma(u)}d_{\sigma(v)}
A_\C(\sigma(u),\sigma(v)).
\]
Hence
\[
\prod_{e=\{u,v\}\in E(G)}
S_{\sigma(u),\sigma(v)}
=
\left(
\prod_{v\in V(G)}
d_{\sigma(v)}^{\deg(v)}
\right)
\left(
\prod_{e=\{u,v\}\in E(G)}
A_\C(\sigma(u),\sigma(v))
\right).
\]
The first factor cancels the vertex factor in
Theorem~\ref{thm:graph-manifold-rt-formula-v2}. Therefore, for every finite
connected simple graph \(G\) with at least one edge,
\begin{equation}
\label{eq:rt-graph-partition-identity-v2}
Z_\C(M_G)=\D_\C^{|E(G)|} Z_{A_\C}(G).
\end{equation}
Here \(Z_{A_\C}(G)\) is the graph partition function of
\eqref{eq:graph-hom-partition-function-v2}. Hence RT evaluation on the
graph-manifold family determines \(Z_{A_\C}(G)\), up to the known nonzero factor
\(\D_\C^{|E(G)|}\).

We apply the graph-homomorphism dichotomy to \(A_\C\). The required algebraic
input is the following characterization.

\begin{theorem}
\label{thm:mbr1-iff-pointed-v2}
Let \(\C\) be a modular category over \(k\). Then \(A_\C\) is
multiplicative-block-rank-one if and only if \(\C\) is pointed.
\end{theorem}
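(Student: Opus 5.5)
Both directions go through the structure of the matrix \(A_\C(i,j)=S_{ij}/(d_id_j)\). The key observation is that row \(0\) and column \(0\) of \(A_\C\) are identically \(1\).

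\emph{Block structure.} Since \(A_\C(0,j)=1\) and \(A_\C(i,0)=1\) for all \(i,j\), the same holds for every Hadamard power \(A_\C^{\circ r}\). So the support of \(A_\C^{\circ r}\) contains the full row \(\{0\}\times I\) and the full column \(I\times\{0\}\). If \(A_\C^{\circ r}\) has rectangular support, the block \(R_p\times C_p\) containing \((0,0)\) must therefore contain \(\{0\}\times I\) and \(I\times\{0\}\). This forces \(R_p=C_p=I\), so there is a single block and every entry is nonzero. Block-rank-one then means \(A_\C^{\circ r}\) has rank one. Since its \(0\)-th row and column are all ones, the \(2\times 2\) minors through \((0,0)\) give
\[
A_\C(i,j)^r=A_\C(i,0)^rA_\C(0,j)^r=1\qquad\text{for all } i,j.
\]
Conversely, if every \(A_\C(i,j)\) is a root of unity, choose \(r\) to be a common multiple of their orders. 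Then \(A_\C^{\circ r}\) is the all-ones matrix, which is block-rank-one. The theorem is thus reduced to the following claim: \(|S_{ij}|=|d_id_j|\) for all \(i,j\), with the ratio a root of unity, if and only if \(\C\) is pointed.

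\emph{Pointed implies multiplicative-block-rank-one.} If \(\C\) is pointed, every simple object is invertible. The double braiding \(c_{V_j,V_i}c_{V_i,V_j}\) on the invertible object \(V_i\otimes V_j\) is then a scalar \(\beta(i,j)\), so \(S_{ij}=\beta(i,j)d_id_j\). By balancing, \(\beta(i,j)=\theta_{i\otimes j}\theta_i^{-1}\theta_j^{-1}\), and twists have finite order (Etingof, recalled in Subsection~\ref{subsec:modular-tensor-categories-v2}). Hence \(A_\C(i,j)=\beta(i,j)\) is a root of unity.

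\emph{Multiplicative-block-rank-one implies pointed.} This is the main step. I would use the unitarity consequences recalled earlier: after embedding in \(\mathbb C\), the \(d_i\) are real, \(\Dim(\C)>0\), and \(SS^\dagger=\Dim(\C)I\). Suppose \(|S_{ij}|=|d_i||d_j|\) for all \(i,j\). The diagonal entry of \(SS^\dagger\) at \(i\) gives
\[
\Dim(\C)=\sum_j|S_{ij}|^2=d_i^2\sum_j d_j^2=d_i^2\,\Dim(\C),
\]
so \(d_i^2=1\) for every \(i\). This does not yet give invertibility, since categorical dimensions need not equal Frobenius--Perron dimensions. To finish, I would use the Verlinde formula \eqref{eq:verlinde-formula-v2}. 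Each column \(j\mapsto S_{ij}/d_j\) gives a character of the fusion ring, and here \(|S_{ij}/d_j|=|d_i|=1\). Taking \(a\) with \(S_{ia}/d_a\) realizing the Frobenius--Perron character, that is, \(S_{ia}/d_a=\FPdim(V_i)\) for all \(i\), yields \(\FPdim(V_i)=1\) for all \(i\). Objects of Frobenius--Perron dimension one are invertible, so \(\C\) is pointed.

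\emph{Main obstacle.} The delicate point is the existence of the index \(a\). I would justify it via the standard fact that the characters \(i\mapsto S_{ia}/d_a\) exhaust all characters of the commutative semisimple Grothendieck algebra \(K_0(\C)\otimes\mathbb C\), because \(S\) is invertible; the Frobenius--Perron character \(\FPdim\) is one of these characters. As a fallback that avoids this, I would argue directly: \(\FPdim(V_i)\) is the spectral radius of the fusion matrix \(N_i\). The eigenvalues of \(N_i\) are exactly the values \(S_{ia}/d_a\), all of modulus one here, so the spectral radius is \(1\) and again \(\FPdim(V_i)=1\).
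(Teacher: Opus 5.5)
Your proposal is correct and follows essentially the same route as the paper. The unit row and column force a single full block, and its $2\times 2$ minors through $(0,0)$ make every entry of $A_\C$ a root of unity. Unitarity of $S$ then gives $d_i^2=1$, and the eigenvalues $S_{ij}/d_j$ of $N_i$ all have modulus one, which forces $\FPdim(V_i)=1$. Your fallback spectral-radius argument is exactly the paper's, and your primary route, via the Frobenius--Perron character being one of the $S$-characters, is an equally valid variant. The only other difference is in the pointed direction: you obtain roots of unity from balancing plus the finite order of twists, whereas the paper uses the finite-domain bicharacter $b_\C$.
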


\begin{proof}
Assume first that \(\C\) is pointed. Then \(I=\Irr(\C)\) is a finite abelian
group under tensor product. Let
\[
\chi_\C(a)=d_a
\]
be the spherical character. Let \(b_\C(a,b)\) be the scalar of the double
braiding on \(V_a\otimes V_b\); hence \(b_\C(a,b)=c_\C(a,b)c_\C(b,a)\) in terms
of braiding scalars. Equivalently, if \(q_\C(a)=c_\C(a,a)\), then
\[
b_\C(a,b)=\frac{q_\C(a+b)}{q_\C(a)q_\C(b)}.
\]
The ribbon twist is \(\theta_a=q_\C(a)\chi_\C(a)\), and hence also
\(\theta_{a+b}/(\theta_a\theta_b)=b_\C(a,b)\). In this pointed case, the
\(S\)-matrix entries satisfy
\[
S_{a,b}=d_{a+b}b_\C(a,b).
\]
Hence
\[
A_\C(a,b)
=
\frac{S_{a,b}}{d_ad_b}
=
\frac{d_{a+b}}{d_ad_b}b_\C(a,b)
=
b_\C(a,b),
\]
because \(d_a=\chi_\C(a)\) is a character with values in \(\{\pm1\}\). See
\cite[Section~8.4]{EGNO2015} and \cite{JoyalStreet1993} for the metric-group
description of pointed braided categories, that is, their description by finite
abelian groups equipped with non-degenerate quadratic forms.
The bicharacter \(b_\C\) has finite domain, so its values have uniformly bounded
finite order. Hence there is an integer \(N\geq1\) such that
\[
A_\C(a,b)^N=1
\qquad
\text{for all }a,b\in I.
\]
Hence \(A_\C^{\circ N}\) is the all-one matrix. In particular it is
block-rank-one, and \(A_\C\) is multiplicative-block-rank-one.

Conversely, assume that \(A_\C\) is multiplicative-block-rank-one. Choose
\(r\geq1\) such that
\[
B:=A_\C^{\circ r}
\]
is block-rank-one. Since the row and column of \(A_\C\) indexed by the tensor
unit are identically equal to \(1\), the same is true for \(B\):
\[
B_{0j}=B_{i0}=1
\qquad
\text{for all }i,j\in I.
\]
Since \(B\) is block-rank-one, its support is a disjoint union of complete
rectangular blocks in the block-diagonal sense recalled in
Section~\ref{subsec:cai-govorov-v2}. The row indexed by \(0\) has full support,
and hence every column lies in the column block paired with the row block
containing \(0\). The column indexed by \(0\) has full support, and hence every
row lies in the corresponding row block. Therefore \(B\) has a single nonzero
support block, namely all of \(I\times I\).

The single block has rank one. Applying the vanishing of the \(2\times2\) minor
with rows \(0,i\) and columns \(0,j\) gives
\[
0
=
\det
\begin{pmatrix}
B_{00}&B_{0j}\\
B_{i0}&B_{ij}
\end{pmatrix}
=
\det
\begin{pmatrix}
1&1\\
1&B_{ij}
\end{pmatrix}
=
B_{ij}-1.
\]
It follows that \(B_{ij}=1\) for all \(i,j\), or equivalently
\[
A_\C(i,j)^r=1
\qquad
\text{for all }i,j\in I.
\]
Every entry of \(A_\C\) is therefore a root of unity.

By Subsection~\ref{subsec:modular-tensor-categories-v2}, the scalars appearing in \(A_\C\) lie
in a finite extension \(K\subset k\) of \(\mathbb Q\). Embed \(K\) into
\(\mathbb C\). Under this embedding, the unitarity theorem for modular data
recalled there gives \(d_j\in\mathbb R\), \(\Dim(\C)>0\), and
\[
\sum_{j\in I}|S_{ij}|^2=\Dim(\C)
\qquad
\text{for every }i\in I.
\]
Since
\[
S_{ij}=d_i d_j A_\C(i,j),
\]
we obtain, for fixed \(i\),
\[
\begin{aligned}
\Dim(\C)
&=
\sum_{j\in I}|S_{ij}|^2\\
&=
\sum_{j\in I}|d_i d_j A_\C(i,j)|^2\\
&=
d_i^2\sum_{j\in I}d_j^2\\
&=
d_i^2\Dim(\C).
\end{aligned}
\]
Here \(|A_\C(i,j)|=1\) because \(A_\C(i,j)\) is a root of unity, and the
dimensions \(d_j\) are real. Since \(\Dim(\C)>0\), it follows that
\[
d_i^2=1
\qquad
\text{for all }i\in I.
\]

It remains to pass from categorical dimension to invertibility. By the Verlinde
diagonalization of the fusion matrices
\cite[Sections~8.13--8.14]{EGNO2015}, the eigenvalues of \(N_i\) are
\[
\frac{S_{ij}}{d_j},
\qquad j\in I.
\]
These eigenvalues can be written as
\[
\frac{S_{ij}}{d_j}
=
d_i A_\C(i,j).
\]
Each has complex absolute value \(1\), because \(d_i^2=1\) and
\(A_\C(i,j)\) is a root of unity. Hence the spectral radius of \(N_i\), which is
\(\FPdim(V_i)\), is \(1\). Therefore
\[
\FPdim(V_i)=1
\qquad
\text{for all }i\in I.
\]
A simple object in a fusion category has Frobenius--Perron dimension \(1\) if
and only if it is invertible; see
\cite[Section~4.5, Exercise~4.5.9]{EGNO2015}. Therefore every simple object of
\(\C\) is invertible, and \(\C\) is pointed.
\end{proof}

Consequently, if \(\C\) is non-pointed, then \(A_\C\) is not
multiplicative-block-rank-one. By Theorem~\ref{thm:cai-govorov-v2}, there is a
constant \(\Delta=\Delta(A_\C)\) such that
\[
\EVAL^{(\Delta)}_{\ConnSimp}(A_\C)
\]
is \(\#\mathrm P\)-hard.

For RT evaluation, the anomaly-free assumption enters through
\eqref{eq:rt-graph-partition-identity-v2}, with
\(\D_\C=\Delta_+(\C)=\Delta_-(\C)\).

\begin{theorem}
\label{thm:core-anomaly-free-hardness-v2}
Let \(\B\) be an anomaly-free modular category over \(k\). If \(\B\) is
not pointed, then
\[
\RTEVAL(\B)
\]
is \(\#\mathrm P\)-hard under polynomial-time Turing reductions.
\end{theorem}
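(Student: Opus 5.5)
The plan is to give a polynomial-time Turing reduction from \(\EVAL^{(\Delta)}_{\ConnSimp}(A_\B)\) to \(\RTEVAL(\B)\), with \(\Delta=\Delta(A_\B)\) the constant of Theorem~\ref{thm:cai-govorov-v2}. Since \(\B\) is not pointed, Theorem~\ref{thm:mbr1-iff-pointed-v2} shows that \(A_\B\) is not multiplicative-block-rank-one. Theorem~\ref{thm:cai-govorov-v2} then makes \(\EVAL^{(\Delta)}_{\ConnSimp}(A_\B)\) \(\#\mathrm P\)-hard under polynomial-time Turing reductions. Turing reductions compose: every \(f\in\#\mathrm P\) reduces to the graph problem, each graph oracle call is answered by polynomially many \(\RTEVAL(\B)\) calls, and all sizes stay polynomial. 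Hence \(\RTEVAL(\B)\) is \(\#\mathrm P\)-hard.

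The reduction goes as follows. Let \(G\) be a connected simple graph of maximum degree at most \(\Delta\).

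\emph{No edges.} Then \(G\) is a single vertex or empty, and we output \(|I|\) or \(1\) directly.

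\emph{At least one edge.} Run the algorithm of Proposition~\ref{prop:effective-surgery-v2} to get, in time polynomial in \(|V(G)|+|E(G)|\), a framed-link diagram \(D_G\) whose surgery manifold is \(M_G\). Query the oracle once on \(D_G\) to obtain \(Z_\B(M_G)\in k_0\). By \eqref{eq:rt-graph-partition-identity-v2},
\[
Z_{A_\B}(G)=\D_\B^{-|E(G)|}\,Z_\B(M_G).
\]
Here \(\D_\B\) is a fixed nonzero element of \(k_0\), nonzero by \eqref{eq:gauss-product-v2}. Its power \(\D_\B^{-|E(G)|}\) costs \(O(\log|E(G)|)\) multiplications by repeated squaring, which is polynomial-time exact arithmetic in the fixed number field. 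One oracle call and polynomially bounded arithmetic therefore compute \(Z_{A_\B}(G)\).

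Two consistency points need checking:

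\begin{enumerate}
\item The oracle's normalization must match the one under which \eqref{eq:rt-graph-partition-identity-v2} was derived, namely the anomaly-free choice \(D_\B=\D\) fixed in Subsection~\ref{subsec:rt-tv-evaluation-problems-v2}. The TQFT computation of Section~\ref{sec:rt-formula-v2} is calibrated against the surgery formula \eqref{eq:rt-surgery-normalization-v2} with exactly this choice.
\item The matrix \(A_\B\) fed to Cai--Govorov must be regarded over a number field inside \(\mathbb C\). We use the fixed embedding \(k_0\hookrightarrow\mathbb C\), under which the multiplicative-block-rank-one condition is unchanged because it is purely algebraic.
\end{enumerate}

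The main obstacle is already handled by the earlier results: the identity \eqref{eq:rt-graph-partition-identity-v2} and the algebraic characterization in Theorem~\ref{thm:mbr1-iff-pointed-v2}. What remains delicate is only the bookkeeping above, namely that the output size of Proposition~\ref{prop:effective-surgery-v2} is polynomial, so the oracle inputs are polynomially bounded, and that the scalar correction is exact in \(k_0\).
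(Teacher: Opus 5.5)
Your proposal is correct and follows essentially the same argument as the paper. Theorem~\ref{thm:mbr1-iff-pointed-v2} and Theorem~\ref{thm:cai-govorov-v2} give \(\#\mathrm P\)-hardness of \(\EVAL^{(\Delta)}_{\ConnSimp}(A_\B)\); the edgeless case is answered directly, and otherwise one oracle query on \(D_G\) followed by rescaling with \(\D_\B^{-|E(G)|}\) via \eqref{eq:rt-graph-partition-identity-v2} recovers \(Z_{A_\B}(G)\). Your extra checks, that the normalization is \(D_\B=\D\) and that \(k_0\) embeds in \(\mathbb C\), are sound and make explicit what the paper leaves implicit.
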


\begin{proof}
Since \(\B\) is not pointed, Theorem~\ref{thm:mbr1-iff-pointed-v2} implies that
\(A_\B\) is not multiplicative-block-rank-one. By
Theorem~\ref{thm:cai-govorov-v2}, there is a constant
\(\Delta=\Delta(A_\B)\) such that
\[
\EVAL^{(\Delta)}_{\ConnSimp}(A_\B)
\]
is \(\#\mathrm P\)-hard.

Given a connected simple graph \(G\) with \(\Delta(G)\leq\Delta\), first consider
the case \(E(G)=\emptyset\). Then \(G\) has one vertex and
\[
Z_{A_\B}(G)=|\Irr(\B)|,
\]
a fixed integer depending only on \(\B\). For this input the reduction outputs
the fixed value and makes no oracle query. Thus assume from now on that \(G\)
has at least one edge.

By Proposition~\ref{prop:effective-surgery-v2}, one constructs in polynomial
time a planar framed-link diagram \(D_G\) representing a framed link
\(L_G\subset S^3\) whose surgery manifold is \(M_G\). The \(\RTEVAL(\B)\)
oracle returns
\[
Z_\B(M_G).
\]
Equation~\eqref{eq:rt-graph-partition-identity-v2}, applied to \(\B\), gives
\[
Z_{A_\B}(G)=\D_\B^{-|E(G)|}Z_\B(M_G).
\]
The scalar \(\D_\B\) is fixed with the category \(\B\), and is nonzero. Hence
\(\D_\B^{-|E(G)|}\) is computable in
polynomial time from \(G\) in the chosen algebraic-number model. It
follows that an oracle for
\(\RTEVAL(\B)\) computes \(Z_{A_\B}(G)\) for every connected simple graph \(G\)
of maximum degree at most \(\Delta\). This yields the polynomial-time Turing
reduction
\[
\EVAL^{(\Delta)}_{\ConnSimp}(A_\B)
\leq_T^p
\RTEVAL(\B).
\]
Since the source problem is \(\#\mathrm P\)-hard, \(\RTEVAL(\B)\) is
\(\#\mathrm P\)-hard.
\end{proof}

\begin{remark}
The proof gives the following restricted-query version. For the constant
\(\Delta=\Delta(A_\B)\), every oracle query in the reduction is one of the
framed-link presentations, produced by
Proposition~\ref{prop:effective-surgery-v2}, of a manifold in the
graph-manifold family
\[
\M_\Delta=\{M_G:\ G \text{ connected simple, } E(G)\neq\emptyset,
\text{ and } \Delta(G)\leq \Delta\}.
\]
\end{remark}

\section{Pointed modular categories}
\label{sec:pointed-modular-categories-v2}

The polynomial-time case of the RT dichotomy is independent of the
graph-manifold construction. For pointed modular categories, the RT surgery sum
is a finite abelian Gauss sum, as in \cite[Section~1.1]{Deloup1999}; see also
\cite{Deloup2001}.

Let \(\C\) be a pointed modular category. The tensor product of simple objects
makes
\[
\Lambda=\Irr(\C)
\]
a finite abelian group. The unit is \(0\), and the dual of \(a\) is \(-a\).
The spherical structure determines a character
\[
\chi_\C:\Lambda\longrightarrow \{\pm1\},
\qquad
\chi_\C(a)=d_a.
\]
Indeed, dimensions are multiplicative on invertible objects and \(d_a=d_{-a}\),
hence \(d_a^2=1\).

Choose representatives and tensor-product identifications for the simple
objects. If \(c_\C(a,b)\) denotes the scalar by which the braiding
\(V_a\otimes V_b\to V_b\otimes V_a\) acts, set
\[
q_\C\colon \Lambda\longrightarrow k^\times,
\qquad
q_\C(a)=c_\C(a,a).
\]
The polarizing bicharacter is
\[
b_\C(a,b)=\frac{q_\C(a+b)}{q_\C(a)q_\C(b)}.
\]
Equivalently, it is the scalar of the double braiding on \(V_a\otimes V_b\):
\[
b_\C(a,b)=c_\C(a,b)c_\C(b,a).
\]
The ribbon twist satisfies
\[
\theta_a=q_\C(a)\chi_\C(a).
\]
Since \(\chi_\C\) is a character, \(\theta\) is again a quadratic form on
\(\Lambda\), with the same polarizing bicharacter:
\[
\frac{\theta_{a+b}}{\theta_a\theta_b}=b_\C(a,b).
\]

Recall that a \defnfont{pre-metric group} over \(k\) is a finite abelian group
\(A\) equipped with a quadratic form \(Q:A\to k^\times\). Thus \(Q(-a)=Q(a)\)
and
\[
b_Q(a,b)=\frac{Q(a+b)}{Q(a)Q(b)}
\]
is a bicharacter. It is a \defnfont{metric group} if \(b_Q\) is non-degenerate.
With this terminology, \((\Lambda,q_\C)\) and \((\Lambda,\theta)\) are metric
groups with the same bicharacter \(b_\C\); see \cite[Section~8.4]{EGNO2015} and
\cite{JoyalStreet1993}.

We apply this to the metric group \((\Lambda,\theta)\). If
\(B=(B_{rs})\) is an integral symmetric \(m\times m\) matrix, let
\[
\theta_B:\Lambda^m\longrightarrow k^\times
\]
be the quadratic form
\[
\theta_B(x_1,\ldots,x_m)
=
\prod_{r=1}^m \theta_{x_r}^{B_{rr}}
\prod_{1\leq r<s\leq m} b_\C(x_r,x_s)^{B_{rs}}.
\]
Its unnormalized Gauss sum is
\begin{equation}
\label{eq:pointed-abelian-gauss-sum-v2}
\mathcal G_\C(B)
:=
\sum_{x\in\Lambda^m}\theta_B(x).
\end{equation}
We keep Gauss sums unnormalized; the RT normalization is written separately.

For a pointed category, the general RT constants of
Section~\ref{sec:modular-categories-quantum-invariants} take the form
\[
D_\C^2=\Dim(\C)=|\Lambda|,
\qquad
\Delta_+=\sum_{a\in\Lambda}\theta_a,
\qquad
\Delta_-=\sum_{a\in\Lambda}\theta_a^{-1}.
\]
Thus \(\Delta_+\) and \(\Delta_-\) are the one-variable Gauss sums for
\(\theta\) and \(\theta^{-1}\); in particular,
\(\Delta_+=\mathcal G_\C((1))\). The square root \(D_\C\) is the fixed rank
chosen in the RT normalization.

\begin{proposition}
\label{prop:pointed-abelian-data-v2}
Let \(\C\) be a pointed modular category over \(k\), and let \(M_L\) be obtained
by surgery on an \(m\)-component framed link \(L\) with linking matrix \(B_L\).
Then
\[
Z_\C(M_L)
=
D_\C^{-b_0(L)-1}
\Delta_+^{-b_+(L)}
\Delta_-^{-b_-(L)}
\mathcal G_\C(B_L).
\]
\end{proposition}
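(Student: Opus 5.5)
Given the surgery normalization \eqref{eq:rt-surgery-normalization-v2}, the proposition reduces to the identity
\[
\langle L(\Omega_\C)\rangle_\C=\mathcal G_\C(B_L),
\]
and the prefactors then match term by term. So the plan is to compute the colored link invariant directly.

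\textbf{Expanding the Kirby color.} Since \(\Omega_\C=\sum_a d_aV_a\), multilinearity of the ribbon invariant in the colors gives
\[
\langle L(\Omega_\C)\rangle_\C=\sum_{x\in\Lambda^m}\Bigl(\prod_r d_{x_r}\Bigr)\langle L(V_{x_1},\ldots,V_{x_m})\rangle_\C .
\]
Each summand is the invariant of \(L\) with all components colored by invertible simple objects. This reduces the proof to evaluating such an invariant.

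\textbf{Evaluating an invertibly colored link.} The claim is
\[
\langle L(V_{x_1},\ldots,V_{x_m})\rangle_\C=\Bigl(\prod_r d_{x_r}\Bigr)\prod_r\theta_{x_r}^{B_{rr}}\prod_{r<s}b_\C(x_r,x_s)^{B_{rs}}.
\]
This is the main step. The idea is that in a pointed braided category with invertible colors, the invariant depends only on the linking matrix.

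\emph{Crossing changes.} Changing a crossing between components \(r\) and \(s\) replaces a braiding by an inverse braiding. This multiplies the invariant by the double-braiding scalar \(b_\C(x_r,x_s)^{\pm1}\), since \(c_{Y,X}c_{X,Y}=b_\C\cdot\mathrm{id}\) on \(V_x\otimes V_y\), a simple object. If the crossing is a self-crossing of component \(r\), the factor is \(b_\C(x_r,x_r)^{\pm1}\).

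\emph{Reduction to an unlink.} Using crossing changes, \(L\) becomes a framed unlink. Each inter-component crossing change alters \(B_{rs}\) by \(\pm1\). Self-crossing changes keep track of the framing via the relation \(\theta_x^2=b_\C(x,x)\,\theta_{x^2}\cdots\); more cleanly, the twist satisfies \(\theta_{a+a}/\theta_a^2=b_\C(a,a)\). Equivalently, one can compare the writhe-based formula of the full twist.

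\emph{Unknot contributions.} A framed unknot with framing \(n\) colored by \(V_x\) evaluates to \(\theta_x^n d_x\).

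Collecting factors gives \(\prod_r d_{x_r}\theta_{x_r}^{B_{rr}}\prod_{r<s}b_\C(x_r,x_s)^{B_{rs}}\). The delicate point is the self-crossing bookkeeping. To avoid it, I would instead use the standard fact that for invertible colors, a braid-closure presentation evaluates as a product over crossings: each crossing between strands colored \(x\) and \(y\) contributes the braiding scalar, which is well defined after fixing representatives. The self-crossing contributions then combine with the blackboard framing into \(\theta\)-powers, via \(\theta_x=q_\C(x)\chi_\C(x)\) and \(q_\C(x)=c_\C(x,x)\).

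\textbf{Cancelling the dimension characters.} Substituting back, the factor \(\prod_r d_{x_r}^2=1\) disappears because \(\chi_\C\) takes values in \(\{\pm1\}\). This yields \(\sum_x\theta_{B_L}(x)=\mathcal G_\C(B_L)\).

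The main obstacle is the second step. There the sign character \(\chi_\C\) must be matched carefully against the difference between \(q_\C\) and \(\theta\) for self-crossings and kinks. I would verify this on the \(\pm1\)-framed unknot and on the Hopf link before trusting the general bookkeeping.
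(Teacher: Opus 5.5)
Your route is the same as the paper's: reduce to $\langle L(\Omega_\C)\rangle_\C=\mathcal G_\C(B_L)$, expand the Kirby color, evaluate each invertibly colored link as $\prod_r\chi_\C(x_r)\theta_{x_r}^{B_{rr}}\prod_{r<s}b_\C(x_r,x_s)^{B_{rs}}$, and cancel $\chi_\C(x_r)^2=1$. The paper simply asserts the middle evaluation. Your crossing-change induction fills that in correctly: reduce by crossing changes to a descending diagram, which is a split union of framed unknots with blackboard framing. You left one point open, namely the self-crossing step. The relation you wrote, $\theta_{2a}/\theta_a^2=b_\C(a,a)$, does not close it on its own.

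The identity you need is $b_\C(x,x)=\theta_x^2$, and it is immediate. The braiding $c_{V_x,V_x}$ acts by the scalar $q_\C(x)$ on the simple object $V_x\otimes V_x$. So the double braiding is $b_\C(x,x)=q_\C(x)^2$, which equals $\theta_x^2$ because $\theta_x=q_\C(x)\chi_\C(x)$ and $\chi_\C(x)^2=1$. Equivalently, $b_\C(x,-x)=\theta_0/(\theta_x\theta_{-x})=\theta_x^{-2}$, and $b_\C(x,-x)=b_\C(x,x)^{-1}$.

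For oppositely oriented strands the relevant scalar is $b_\C(x_r,-x_r)=\theta_{x_r}^{-2}$. So in either orientation case, changing a self-crossing of component $r$ multiplies the invariant by $\theta_{x_r}^{\epsilon}$. Here $\epsilon=\pm2$ is the resulting change in the self-writhe of component $r$, that is, in its blackboard framing. Hence the factor $d_{x_r}\theta_{x_r}^{w_r}$ is preserved. Inter-component changes work the same way: they contribute $b_\C(x_r,x_s)^{\pm1}$, matching the $\pm1$ change in linking number.

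With this in place you need no separate check on the unknot or the Hopf link. You should also drop the braid-closure alternative. For $x\neq y$ a single-crossing scalar $c_\C(x,y)$ depends on chosen identifications $V_x\otimes V_y\cong V_y\otimes V_x$ and on associators. The crossing-change argument uses only the canonical scalars $b_\C$ and $\theta$.
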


\begin{proof}
We specialize the surgery formula
\eqref{eq:rt-surgery-normalization-v2} to a pointed category. For simple labels
\[
x=(x_1,\ldots,x_m)\in \Lambda^m,
\]
let \(L(x)\) be the link \(L\) with its \(r\)-th component colored by
\(V_{x_r}\).
The Kirby color specializes to
\[
\Omega_\C=\sum_{a\in \Lambda}\chi_\C(a)V_a.
\]
Expanding all Kirby colors gives
\[
\langle L(\Omega_\C)\rangle_\C
=
\sum_{x\in \Lambda^m}
\left(\prod_{r=1}^m \chi_\C(x_r)\right)
\langle L(x)\rangle_\C.
\]

The ribbon-calculus evaluation of \(L(x)\) is as follows; see
\cite[Chapter~II]{TuraevBook}. Since the colors are invertible, each crossing
between two fixed colors acts by the corresponding braiding scalar. A framed
component colored by \(x_r\) contributes the dimension \(\chi_\C(x_r)\) and the
twist factor
\[
\theta_{x_r}^{(B_L)_{rr}}
\]
from its framing. A positive linking between the \(r\)-th and \(s\)-th components
contributes one double braiding, hence the scalar
\(b_\C(x_r,x_s)\). Therefore the total contribution of their linking number is
\[
b_\C(x_r,x_s)^{(B_L)_{rs}}.
\]
Hence
\[
\langle L(x)\rangle_\C
=
\left(\prod_{r=1}^m \chi_\C(x_r)\right)
\prod_{r=1}^m \theta_{x_r}^{(B_L)_{rr}}
\prod_{1\leq r<s\leq m} b_\C(x_r,x_s)^{(B_L)_{rs}}.
\]
The product of \(\chi_\C\)-factors in this display comes from the dimensions of
the colored components, while the product in the expansion of
\(\Omega_\C\) comes from the Kirby-color coefficients. Together they give
\(\prod_r \chi_\C(x_r)^2=1\). Therefore
\[
\langle L(\Omega_\C)\rangle_\C=\mathcal G_\C(B_L).
\]
Substituting this equality into
\eqref{eq:rt-surgery-normalization-v2} gives the displayed formula for
\(Z_\C(M_L)\).
\end{proof}

\begin{theorem}
\label{thm:pointed-rt-in-fp-v2}
If \(\C\) is a pointed modular category over \(k\), then
\[
\RTEVAL(\C)\in \FP.
\]
\end{theorem}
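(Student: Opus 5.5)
The plan is to apply Proposition~\ref{prop:pointed-abelian-data-v2}. It reduces the computation to three tasks: computing the linking matrix \(B_L\) together with its inertia \((b_+,b_-,b_0)\), and then evaluating the unnormalized Gauss sum \(\mathcal G_\C(B_L)\) in polynomial time. The scalars \(D_\C,\Delta_\pm\) are fixed nonzero elements of \(k_0\). The prefactor \(D_\C^{-b_0-1}\Delta_+^{-b_+}\Delta_-^{-b_-}\) has exponents at most \(m+1\), so it is computed by repeated squaring in the fixed number field. Entries of \(B_L\) have polynomial bit size, and the whole sum in \(k_0\) has polynomial encoding, because all values are roots of unity times fixed constants.

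\textbf{Step 1 (combinatorial data).} From the framed-link diagram I read off the linking numbers as half the signed crossing counts between components, and the framings from the given integers. This gives \(B_L\) in polynomial time. The signature data \(b_\pm,b_0\) come from exact rational Gaussian elimination (symmetric \(LDL^T\) diagonalization over \(\mathbb Q\), with Sylvester's law of inertia), or from Sturm sequences of the characteristic polynomial. Either method is polynomial by standard bit-size bounds.

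\textbf{Step 2 (Gauss sum, the main obstacle).} The sum \(\mathcal G_\C(B_L)\) has \(|\Lambda|^m\) terms, so direct expansion fails. Write \(\Lambda\cong\bigoplus_p\Lambda_p\) as a sum of Sylow subgroups. The metric group \((\Lambda,\theta)\) splits orthogonally as \(\bigoplus_p(\Lambda_p,\theta_p)\): the bicharacter pairs distinct primary parts trivially, and a quadratic form is determined on each part. Hence \(\mathcal G_\C(B)=\prod_p\mathcal G_p(B)\). Next I decompose each \(\Lambda_p\) further into an orthogonal sum of cyclic pieces (\(p\) odd) or rank-\(\le2\) pieces (\(p=2\)); this is a fixed one-time computation depending only on \(\C\). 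I then perform integral congruence reduction on \(B\): apply unimodular changes of variables \(x\mapsto Px\), \(P\in\mathrm{GL}_m(\mathbb Z)\), which permute \(\Lambda^m\) and transform \(\theta_B\) into \(\theta_{P^TBP}\).

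For the reduction I would use a Smith/Hermite-type symmetric elimination over \(\mathbb Z\), done modulo the fixed exponent \(N\) of \(\Lambda\). Working modulo \(N\) is legitimate because \(\theta_B\) depends on \(B\) only through diagonal entries modulo the order of \(\theta\) (dividing \(2N\)) and off-diagonal entries modulo \(N\). The goal is to bring \(B\) to a block-diagonal form with blocks of size \(\le2\). Over the fixed ring \(\mathbb Z/2N\), the usual symmetric diagonalization over local rings \(\mathbb Z/p^k\) works. At each stage I pick an entry of minimal \(p\)-adic valuation, either on the diagonal or (for \(p=2\)) a hyperbolic \(2\times2\) block, and clear its row and column. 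This is where care is needed. At \(p=2\), a unit diagonal entry may be unavailable, and \(2\times2\) blocks must then be split off. Since I work modulo \(2N\), I must also lift the operations consistently so they remain bijections of \(\Lambda^m\). I would handle this by working directly with the variables in \(\Lambda_p^m\) and using the \(\mathbb Z/p^k\)-module structure. The elimination uses \(O(m^3)\) ring operations in a fixed finite ring.

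After reduction, \(\mathcal G_p(B)\) is a product of at most \(m\) Gauss sums of \(1\times1\) or \(2\times2\) forms. Each such sum ranges over \(\Lambda_p\) or \(\Lambda_p^2\), which are sets of bounded size, and there are only finitely many possible reduced blocks modulo \(2N\). Their values can therefore be tabulated in advance in \(k_0\). Multiplying at most \(m\) table entries and combining with Step~1 via Proposition~\ref{prop:pointed-abelian-data-v2} yields \(Z_\C(M_L)\) in polynomial time. This shows \(\RTEVAL(\C)\in\FP\).
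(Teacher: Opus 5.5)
Your proposal is correct. Its skeleton matches the paper's: reduce via Proposition~\ref{prop:pointed-abelian-data-v2} to the linking matrix, its inertia, and the Gauss sum $\mathcal G_{\C}(B_L)$. The difference lies in the one nontrivial step.

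The paper regards $\theta_{B_L}$ as a quadratic form on the finite abelian group $\Lambda^m$, with $O(m)$ primary summands. It then invokes the general polynomial-time Gauss-sum algorithm of \cite{DelaneyMariaSampertonTY2025} as a black box, after translating to their normalized, $\mathbb Q/\mathbb Z$-valued setting.

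You instead exploit the tensor structure of $\theta_B$. Two facts make this work:
\begin{itemize}
\item $\theta_B(Px)=\theta_{P^{T}BP}(x)$ for integer matrices $P$.
\item $\theta_B$ depends only on the diagonal of $B$ modulo $2N$ and the off-diagonal entries modulo $N$, where $N$ is the exponent of $\Lambda$.
\end{itemize}
Hence a $p$-adic Jordan splitting of the integer matrix $B$ alone, with $\Lambda$ untouched, factors $\mathcal G_{\C}(B)$ into at most $m$ factors per prime. These factors are drawn from a finite table precomputed in $k_0$.

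Your route buys self-containedness and avoids the normalization bookkeeping. The paper's route buys brevity and uniformity with Section~\ref{sec:tv-dichotomy-v2}. There the form $Q_{\omega,d,t}$ on $Z^1(t;\Lambda)$ has no such product structure, so a general Gauss-sum algorithm is needed anyway.

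Two small remarks on your sketch. First, the orthogonal splitting of $(\Lambda_p,\theta_p)$ into cyclic and rank-two pieces is unnecessary. Once $B$ is block diagonal modulo $p^k$ (with diagonal modulo $2p^k$), the sum already factors into sums over $\Lambda_p$ or $\Lambda_p^2$. Second, for odd $p$, when the minimal valuation occurs only off the diagonal, you need the substitution $x_i\mapsto x_i+x_j$ to create a diagonal pivot. This is part of the standard local-ring diagonalization you appeal to, but it is worth stating.
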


\begin{proof}
Given a framed surgery diagram \(L\), compute its linking matrix \(B_L\). The
signature and nullity of \(B_L\), and hence the normalization in
\eqref{eq:rt-surgery-normalization-v2}, are computable by integer linear
algebra in polynomial time.

It remains to evaluate \(\mathcal G_\C(B_L)\). Fix a decomposition of
\(\Lambda\) as a product of cyclic groups. Choose a primitive root of unity
\(\xi\) whose cyclic group contains all values of \(\theta\) and \(b_\C\). Then
each summand is a power of \(\xi\),
with exponent an integral quadratic form in \(O(m)\) variables and
coefficients computed from \(B_L\) modulo the exponent of \(\Lambda\).
Thus \(\theta_{B_L}\) is a quadratic form on the finite abelian group
\(\Lambda^m\), whose primary decomposition has \(O(m)\) summands because
\(\Lambda\) is fixed.

Quadratic Gauss sums on finite abelian groups given by primary decompositions
are computable in polynomial time; see
\cite[Theorem~2.2 and Appendix~A]{DelaneyMariaSampertonTY2025}. Their theorem is
stated for normalized sums with values written additively in \(\mathbb Q/\mathbb
Z\). The cited algorithm applies after expressing the fixed roots of unity as
powers of the chosen primitive root. The normalization differs
only by the known factor \(|\Lambda|^{m/2}\), which lies in the fixed number
field. Hence \(\mathcal G_\C(B_L)\), and therefore \(Z_\C(M_L)\), is computable
in polynomial time in the size of the surgery presentation.
\end{proof}

\section{Arbitrary modular categories}
\label{sec:arbitrary-modular-v2}

The non-pointed anomaly-free case proved in
Section~\ref{sec:anomaly-free-case-v2} can be used to treat an arbitrary
modular category. The anomaly-free assumption is removed by passing from a
modular category \(\C\) to the Drinfeld center of its underlying spherical fusion
category. The center is anomaly-free, and its RT invariant is recovered from the
RT invariant of \(\C\) by multiplying by the value on the oppositely oriented
3-manifold.

Let \(\C^{\mathrm{rev}}\) denote the reverse modular category, with the same
underlying fusion category as \(\C\), inverse braiding, and inverse twist. The
Drinfeld center of a modular category factors as
\[
\Z(\C)\simeq \C\boxtimes \C^{\mathrm{rev}}
\]
as ribbon, equivalently modular, categories; see \cite{Muger2003} and
\cite[Section~8.20]{EGNO2015}.

\begin{lemma}
\label{lem:center-nonpointed-v2}
Let \(\C\) be a modular category. If \(\C\) is not pointed, then \(\Z(\C)\) is
not pointed.
\end{lemma}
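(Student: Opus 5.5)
We argue by contraposition, exhibiting a non-invertible simple object of \(\Z(\C)\) directly. The natural tool is the ribbon equivalence \(\Z(\C)\simeq \C\boxtimes\C^{\mathrm{rev}}\) recalled just above. Since \(\C\) is not pointed, we choose a simple object \(V_i\) of \(\C\) that is not invertible.

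The key steps, in order, are as follows.

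First, recall that the simple objects of a Deligne product \(\C\boxtimes\mathcal E\) of fusion categories over an algebraically closed field are exactly the objects \(V\boxtimes W\) with \(V\) simple in \(\C\) and \(W\) simple in \(\mathcal E\). Hence \(V_i\boxtimes\mathbf 1\) is simple in \(\C\boxtimes\C^{\mathrm{rev}}\).

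Second, show that \(V_i\boxtimes\mathbf 1\) is not invertible, using a dimension argument. Frobenius--Perron dimension is multiplicative on Deligne products, so
\[
\FPdim(V_i\boxtimes\mathbf 1)=\FPdim(V_i)\FPdim(\mathbf 1)=\FPdim(V_i).
\]
This is also clear directly: the fusion matrix of \(V_i\boxtimes\mathbf 1\) is \(N_i\otimes \mathrm{id}\), which has the same spectral radius as \(N_i\). Since \(V_i\) is a non-invertible simple object, we have \(\FPdim(V_i)>1\), by the criterion \cite[Exercise~4.5.9]{EGNO2015} already used in the proof of Theorem~\ref{thm:mbr1-iff-pointed-v2}. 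The same criterion, applied in \(\C\boxtimes\C^{\mathrm{rev}}\), then shows that \(V_i\boxtimes\mathbf 1\) is not invertible.

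An alternative second step avoids dimensions altogether. If \(V_i\boxtimes\mathbf 1\) were invertible, then
\[
(V_i\otimes V_i^*)\boxtimes\mathbf 1\cong \mathbf 1\boxtimes\mathbf 1.
\]
Applying the exact functor that extracts the first factor would force \(V_i\otimes V_i^*\cong\mathbf 1\), so \(V_i\) would be invertible, a contradiction.

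Third, transport the conclusion along the equivalence. Being pointed is invariant under tensor equivalence, since equivalences preserve simplicity and invertibility. Therefore \(\Z(\C)\) is not pointed.

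The only point needing care is the description of the simple objects of a Deligne product. This uses the algebraic closure of \(k\), which is assumed throughout the paper. It is standard and can be cited from \cite[Section~4.6]{EGNO2015}. Nothing else is expected to be an obstacle.
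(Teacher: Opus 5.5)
Your proposal is correct and follows the paper's proof. Both pick a non-invertible simple object \(X\in\C\), note that \(X\boxtimes\mathbf 1\) is simple in \(\C\boxtimes\C^{\mathrm{rev}}\simeq\Z(\C)\), and conclude that it cannot be invertible because otherwise \(X\) would be; your \(\FPdim\) argument, or the projection onto the first factor, fills in this last step, which the paper states without justification.
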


\begin{proof}
Choose a simple object \(X\in\C\) which is not invertible. Under the ribbon
equivalence
\[
\Z(\C)\simeq \C\boxtimes\C^{\mathrm{rev}},
\]
the object \((X,\mathbf 1)\) is simple. If it were invertible in the Deligne
product, then \(X\) would be invertible in \(\C\). This contradicts the choice
of \(X\). Hence \(\Z(\C)\) has a non-invertible simple object and is not
pointed.
\end{proof}

If \(\A\) is spherical fusion, then \(\Z(\A)\) is modular and its Gauss sums
satisfy
\[
\Delta_+(\Z(\A))=\Delta_-(\Z(\A))=\Dim(\A).
\]
See \cite[Theorem~5.4]{TuraevVirelizier2017}.

The RT invariant is multiplicative under Deligne products of modular categories,
and reversal of the braiding corresponds to reversal of the orientation of the
3-manifold. With the compatible square-root choices fixed in
Section~\ref{subsec:rt-tv-evaluation-problems-v2}, this gives
\[
Z_{\C\boxtimes\mathcal E}(M)=Z_\C(M)Z_{\mathcal E}(M),
\qquad
Z_{\C^{\mathrm{rev}}}(M)=Z_\C(-M).
\]
The surgery formula \eqref{eq:rt-surgery-normalization-v2} reflects the same
identities: the Deligne product multiplies the ribbon-link evaluations and the
reverse category inverts the braiding and twist.
For the RT product and reverse-orientation formulas, see
\cite[Chapter~IV]{TuraevBook} and
\cite[Corollary~17.8]{TuraevVirelizier2017}.
Consequently
\begin{equation}
\label{eq:center-rt-product-v2}
Z_{\Z(\C)}(M)=Z_\C(M)Z_\C(-M)
\end{equation}
for every closed oriented 3-manifold \(M\).

\begin{proposition}
\label{prop:center-rt-reduction-v2}
For every modular category \(\C\) over \(k\), there is a polynomial-time Turing
reduction
\[
\RTEVAL(\Z(\C))\leq_T^p \RTEVAL(\C).
\]
\end{proposition}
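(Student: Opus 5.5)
The plan is to use the product identity \eqref{eq:center-rt-product-v2}. On input a framed-link surgery presentation \(L\) of \(M\), the reduction computes \(Z_{\Z(\C)}(M)\) from two oracle calls to \(\RTEVAL(\C)\): one on \(M\) itself and one on \(-M\). The only algorithmic ingredient is an effective way to produce a surgery presentation of \(-M\) from one of \(M\).

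\emph{Step 1: a presentation of \(-M\).} Given the planar framed-link diagram \(D\) for \(L\), form its mirror image \(\overline D\) by switching every crossing, and negate every integer framing. Since \(S^3_{\overline L}\) is the image of \(S^3_L\) under an orientation-reversing reflection of \(S^3\), we get \(S^3_{\overline L}\cong -M\). The diagram \(\overline D\) has exactly the same size as \(D\) and is produced in linear time. Its linking matrix is \(-B_L\), so \(b_\pm\) are exchanged and \(b_0\) is unchanged. This is consistent with the normalization \eqref{eq:rt-surgery-normalization-v2}, but the reduction itself does not need that fact.

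\emph{Step 2: oracle calls and multiplication.} The reduction queries the \(\RTEVAL(\C)\) oracle on \(D\) and on \(\overline D\), obtaining \(Z_\C(M)\) and \(Z_\C(-M)\) as elements of the fixed number field. It then multiplies them in that field. By \eqref{eq:center-rt-product-v2} the product equals \(Z_{\Z(\C)}(M)\). The multiplication is polynomial-time because the field is fixed and the oracle outputs have encodings of size polynomial in the input, since they were written by the oracle in polynomial time relative to the query size.

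\emph{Main obstacle.} I expect the delicate point to be normalization bookkeeping rather than complexity. One must check that the number field \(k_0\) chosen for \(\Z(\C)\) can be taken to contain the one for \(\C\), or at least that the values \(Z_\C(\pm M)\) embed compatibly in it. One must also check that the square-root conventions \(D_{\C\boxtimes\C^{\mathrm{rev}}}=D_\C^2\) and \(D_{\Z(\C)}=\Dim(\C)\) agree, so that \eqref{eq:center-rt-product-v2} holds on the nose and not merely up to a sign. Both are fixed by the conventions in Section~\ref{subsec:rt-tv-evaluation-problems-v2}, since \(D_\C^2=\Dim(\C)\). I would record this explicitly. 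If a discrepancy did arise, it would be a fixed scalar independent of \(M\), computable once, so the reduction would survive after multiplying by that constant.
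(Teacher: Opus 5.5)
Your proposal is correct and is essentially the paper's proof: you present $-M$ by the mirror diagram with negated framings, query the $\RTEVAL(\C)$ oracle on both presentations, and multiply using \eqref{eq:center-rt-product-v2}, with $D_{\Z(\C)}=\Dim(\C)=D_\C^2$ making that identity exact. One small correction: you cannot bound the size of the oracle's answers by saying the oracle wrote them in polynomial time, since an oracle has no running-time bound and here it is $\#\mathrm P$-hard; this is harmless, because the paper's reduction model includes exact arithmetic in the fixed field, and the values $Z_\C(\pm M)$ have polynomially bounded height in any case.
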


\begin{proof}
Let \(L\subset S^3\) be a framed link presenting a closed connected oriented
3-manifold \(M\) by surgery. In polynomial time one constructs a framed link
\(L^-\) presenting \(-M\), for instance by taking the mirror
diagram and changing the signs of the framings. The operation is local on the
link diagram and preserves polynomial size.

Querying the \(\RTEVAL(\C)\) oracle on \(L\) and on \(L^-\) gives \(Z_\C(M)\)
and \(Z_\C(-M)\). Their product, computed exactly in the fixed number field, is
\[
Z_\C(M)Z_\C(-M)=Z_{\Z(\C)}(M)
\]
by \eqref{eq:center-rt-product-v2}. Hence the two oracle calls compute the value
of \(\RTEVAL(\Z(\C))\) on the input \(L\), giving the required polynomial-time
Turing reduction.
\end{proof}

\begin{theorem}
\label{thm:rt-dichotomy-v2}
Let \(\C\) be a modular category over \(k\).
\begin{enumerate}
\item If \(\C\) is pointed, then
\[
\RTEVAL(\C)\in\FP.
\]
\item If \(\C\) is not pointed, then
\[
\RTEVAL(\C)
\]
is \(\#\mathrm P\)-hard under polynomial-time Turing reductions.
\end{enumerate}
\end{theorem}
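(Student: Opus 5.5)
Part (1) is exactly Theorem~\ref{thm:pointed-rt-in-fp-v2}, so nothing further is needed there. For part (2) the plan is to chain the center reduction with the anomaly-free hardness result. Assume \(\C\) is not pointed and set \(\B:=\Z(\C)\), regarded as a modular category via the ribbon equivalence \(\Z(\C)\simeq\C\boxtimes\C^{\mathrm{rev}}\).

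First, \(\B\) satisfies the hypotheses of Theorem~\ref{thm:core-anomaly-free-hardness-v2}. It is modular and anomaly-free, being the center of the spherical fusion category underlying \(\C\), with \(\Delta_+(\B)=\Delta_-(\B)=\Dim(\C)\). By Lemma~\ref{lem:center-nonpointed-v2} it is not pointed. That theorem then gives \(\#\mathrm P\)-hardness of \(\RTEVAL(\B)\) under polynomial-time Turing reductions.

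Second, Proposition~\ref{prop:center-rt-reduction-v2} gives \(\RTEVAL(\B)\leq_T^p\RTEVAL(\C)\). Polynomial-time Turing reductions compose, because oracle inputs of polynomial size are answered by polynomial-time subroutines that in turn make polynomial-size queries. So for every \(f\in\#\mathrm P\) we have \(f\leq_T^p\RTEVAL(\B)\leq_T^p\RTEVAL(\C)\), and \(\RTEVAL(\C)\) is \(\#\mathrm P\)-hard.

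The one point needing care, which I expect to be the main obstacle, is the bookkeeping of number fields and normalizations. The value \(Z_{\B}(M)\) must be the one used in Theorem~\ref{thm:core-anomaly-free-hardness-v2}, namely with \(D_\B=\D_\B\), and it must equal the product \(Z_\C(M)Z_\C(-M)\) computed in the field \(k_0\) fixed for \(\C\). I would handle this by enlarging \(k_0\) to contain the scalars of \(\B\). This is harmless: those scalars are products of scalars of \(\C\) and \(\C^{\mathrm{rev}}\), and with \(D_{\C\boxtimes\C^{\mathrm{rev}}}=D_\C^2=\Dim(\C)\) this choice agrees with the anomaly-free choice \(D_{\Z(\C)}=\Dim(\C)\) fixed in Section~\ref{subsec:rt-tv-evaluation-problems-v2}. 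Any remaining discrepancy would only be a fixed nonzero scalar power, which the reduction can divide out.
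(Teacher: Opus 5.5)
Your proposal is correct and follows the paper's proof essentially verbatim. Part (1) is Theorem~\ref{thm:pointed-rt-in-fp-v2}, and part (2) combines Lemma~\ref{lem:center-nonpointed-v2}, the anomaly-freeness of \(\Z(\C)\), Theorem~\ref{thm:core-anomaly-free-hardness-v2} and Proposition~\ref{prop:center-rt-reduction-v2}, then composes the reductions; your normalization check \(D_{\C\boxtimes\C^{\mathrm{rev}}}=D_\C^2=\Dim(\C)=\Delta_\pm(\Z(\C))\) agrees with the conventions fixed in Subsection~\ref{subsec:rt-tv-evaluation-problems-v2}, so no correction factor is actually needed.
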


\begin{proof}
The pointed case is Theorem~\ref{thm:pointed-rt-in-fp-v2}. Assume that \(\C\)
is not pointed. By Lemma~\ref{lem:center-nonpointed-v2}, the center \(\Z(\C)\)
is not pointed. The modular category \(\C\) is ribbon, hence spherical as a
fusion category. By the center theorem recalled above, \(\Z(\C)\) is an
anomaly-free modular category. Theorem~\ref{thm:core-anomaly-free-hardness-v2}
therefore implies that
\[
\RTEVAL(\Z(\C))
\]
is \(\#\mathrm P\)-hard. By Proposition~\ref{prop:center-rt-reduction-v2},
\[
\RTEVAL(\Z(\C))\leq_T^p \RTEVAL(\C).
\]
Composing reductions, every \(\#\mathrm P\) function reduces to
\(\RTEVAL(\C)\). Hence \(\RTEVAL(\C)\) is \(\#\mathrm P\)-hard.
\end{proof}

\section{The Turaev--Viro dichotomy}
\label{sec:tv-dichotomy-v2}

This section proves the Turaev--Viro dichotomy.  The hard direction follows
from the Reshetikhin--Turaev dichotomy through the Drinfeld center: the center
\(\Z(\A)\) is anomaly-free modular
\cite[Theorem~5.4]{TuraevVirelizier2017}, and the Turaev--Viro invariant of
\(\A\) agrees with the Reshetikhin--Turaev invariant of \(\Z(\A)\)
\cite[Theorem~17.1 and Corollary~17.7(a)]{TuraevVirelizier2017}.  Moreover,
\(\Z(\A)\) is pointed precisely when \(\A\) is trivializable pointed. Thus, in
the non-trivializable case, the anomaly-free hardness theorem applies to the
modular category \(\Z(\A)\).

The polynomial-time direction requires one additional calculation.  If
\(\A\simeq\Vec^{\omega,d}_\Lambda\) is trivializable pointed and \(t\) is a
triangulation, the trivializability condition lets one rewrite the pointed
Turaev--Viro state sum as an explicit finite abelian Gauss sum over the group of
\(\Lambda\)-valued \(1\)-cocycles \(Z^1(t;\Lambda)\).  This group and the
quadratic function defining the summand are computable from \(t\) by finite
abelian linear algebra, so the invariant is computable in polynomial time.

\subsection{Centers and trivializable pointed categories}
\label{subsec:tv-trivializable-pointed-v2}

The relation between Reshetikhin--Turaev and Turaev--Viro invariants is through
the Drinfeld center. By \eqref{eq:tv-rt-center-comparison-v2}, the
Turaev--Viro invariant of a spherical fusion category \(\A\) is the
Reshetikhin--Turaev invariant of \(\Z(\A)\). Thus the relevant dichotomy is
whether the center is pointed.

A pointed fusion category is tensor equivalent to
\[
\Vec^\omega_H
\]
for a finite group \(H\) and a normalized \(3\)-cocycle
\(\omega\in Z^3(H;k^\times)\). For pointed fusion categories and their
description by finite groups and \(3\)-cocycles, see
\cite[Sections~2.6 and~8.4]{EGNO2015}.

Let \(\Lambda\) be a finite abelian group. Following
\cite[Proposition~4.1]{Breen1999}, define a homomorphism
\[
\psi_\Lambda:H^3(\Lambda;k^\times)\longrightarrow
\operatorname{Hom}(\wedge^3\Lambda,k^\times)
\]
as follows. If a class is represented by a normalized \(3\)-cocycle
\(\omega\), then
\[
\psi_\Lambda(\omega)(x_1,x_2,x_3)
=
\prod_{\sigma\in S_3}
\omega(x_{\sigma(1)},x_{\sigma(2)},x_{\sigma(3)})^{\operatorname{sgn}(\sigma)}.
\]
This alternating trilinear form depends only on the cohomology class of
\(\omega\). We say that \([\omega]\in H^3(\Lambda;k^\times)\) is
\defnfont{trivializable} if
\begin{equation}
\label{eq:trivializable-cocycle-condition-v2}
\psi_\Lambda([\omega])=1.
\end{equation}

We call a spherical fusion category \(\A\) \defnfont{trivializable pointed} if
its underlying fusion category is tensor equivalent to
\[
\A\simeq \Vec^\omega_\Lambda
\]
for a finite abelian group \(\Lambda\) and a trivializable class
\([\omega]\in H^3(\Lambda;k^\times)\). Equivalently, \(\Z(\A)\) is pointed.
Indeed, if \(\Z(\A)\) is pointed, then the forgetful
tensor functor \(\Z(\A)\to\A\) is dominant, and every simple object of \(\A\) is
a subobject of the image of a direct sum of invertible objects; hence every
simple object of \(\A\) is invertible. Moreover, since \(\Z(\A)\) is braided pointed, these
invertible simples are generated by the image of a finite abelian group. Thus
\(\A\simeq\Vec^\omega_\Lambda\) with \(\Lambda\) finite abelian, and
\cite[Theorem~3.2]{AngionoGalindo2017} identifies the condition that
\(\Z(\A)\) be pointed with trivializability of \([\omega]\).

\subsection{The pointed Turaev--Viro state sum}
\label{subsec:pointed-tv-state-sum-v2}

Let \(t\) be a triangulation of a closed oriented 3-manifold \(M\), with a
total order on its vertices. An ordered edge is written
\(\beta\gamma\), \(\beta<\gamma\), and is oriented from \(\beta\) to \(\gamma\);
all simplices are written with vertices in increasing order. We write
\[
V(t),\qquad E(t),\qquad F(t),\qquad T(t)
\]
for the vertices, edges, triangles, and tetrahedra of \(t\). If
\(\Delta=[\beta\gamma\delta\epsilon]\in T(t)\), with
\(\beta<\gamma<\delta<\epsilon\), the vertex order gives an orientation of
\(\Delta\). We compare this orientation with the given orientation of \(M\) and
define
\[
\mu(\Delta)=
\begin{cases}
1,&\text{if the ordered tetrahedron agrees with the orientation of }M,\\
-1,&\text{otherwise.}
\end{cases}
\]
Let \(\Gamma\) be a finite abelian group, written additively. A
\(\Gamma\)-coloring of \(t\) is a function
\[
c:E(t)\longrightarrow \Gamma,
\qquad
\beta\gamma\longmapsto c_{\beta\gamma},
\]
on ordered edges satisfying
\begin{equation}
\label{eq:pointed-tv-coloring-relation-v2}
c_{\beta\gamma}+c_{\gamma\delta}=c_{\beta\delta}
\end{equation}
for every ordered triangle \(\beta<\gamma<\delta\). We denote the finite
abelian group of such colorings by
\[
Z^1(t;\Gamma)\subseteq \Gamma^{E(t)}.
\]

Now let \(\Lambda\) be a finite abelian group and let
\(\omega\in Z^3(\Lambda,k^\times)\) be normalized. For
\(c\in Z^1(t;\Lambda)\), define the tetrahedral weight
\[
E_\omega(c)
=
\prod_{\Delta=[\beta\gamma\delta\epsilon]\in T(t)}
\omega(c_{\beta\gamma},c_{\gamma\delta},c_{\delta\epsilon})^{-\mu(\Delta)}.
\]
Here every tetrahedron is written with vertices in increasing order, and
\(\omega\) is evaluated on the three consecutive ordered edges
\(\beta\gamma\), \(\gamma\delta\), and \(\delta\epsilon\).

As in Subsection~\ref{subsec:rt-tv-evaluation-problems-v2}, \(|M|_\A\) denotes the
Turaev--Viro invariant of the spherical fusion category \(\A\). For the
spherical structure \(d=1\) on \(\Vec^{\omega}_\Lambda\), the proof of
\cite[Theorem~H.1]{TuraevVirelizier2017}, specifically formulas \((H.1)\),
\((H.2)\), and \((H.6)\), gives
\begin{equation}
\label{eq:pointed-tv-canonical-state-sum-v2}
|M|_{\Vec^{\omega,1}_\Lambda}
=
|\Lambda|^{-|V(t)|}
\sum_{c\in Z^1(t;\Lambda)}
E_\omega(c).
\end{equation}

For a character
\[
d:\Lambda\to\{\pm 1\},
\]
write
\[
\Vec^{\omega,d}_\Lambda
\]
for the pointed spherical category with this character. The associator is still
\(\omega\). Hence \(Z^1(t;\Lambda)\) and \(E_\omega(c)\) are unchanged. By
Appendix~A.3 and Example~2.7.2 of \cite{TuraevVirelizier2017}, the effect of
\(d\) on the summand is a product of values of \(d\) on the edge labels. This
product defines a character
\[
\chi_{d,t}:Z^1(t;\Lambda)\longrightarrow \{\pm1\}.
\]
The pointed state-sum formula becomes
\begin{equation}
\label{eq:pointed-tv-spherical-state-sum-v2}
|M|_{\Vec^{\omega,d}_\Lambda}
=
|\Lambda|^{-|V(t)|}
\sum_{c\in Z^1(t;\Lambda)}
\chi_{d,t}(c)\,E_\omega(c),
\end{equation}
with \(\chi_{d,t}=1\) when \(d=1\).

\subsection{A normal form for trivializable 3-cocycles}
\label{subsec:degree-three-normal-form-v2}

We record a normal form for \(H^3(\Lambda;k^\times)\). It separates
the part detected by \(\psi_\Lambda\) from the part which produces quadratic
weights on edge colorings.

\subsubsection{The K\"unneth decomposition}

Fix a decomposition
\[
\Lambda\simeq \bigoplus_{i=1}^r C_i,
\qquad
C_i=\mathbb Z/n_i\mathbb Z.
\]
For a cyclic group \(C_n\), one has
\[
H_{2s+1}(C_n;\mathbb Z)\simeq C_n,
\qquad
H_{2s}(C_n;\mathbb Z)=0
\]
for \(s\geq0\) and \(2s>0\). Applying the K\"unneth formula for group homology to
the direct product \(\Lambda=\bigoplus_i C_i\) gives
\[
H_3(\Lambda;\mathbb Z)\simeq
\left(\bigoplus_i \mathbb Z/n_i\mathbb Z\right)
\oplus
\left(\bigoplus_{i<j}\mathbb Z/g_{ij}\mathbb Z\right)
\oplus
\left(\bigoplus_{i<j<\ell}\mathbb Z/g_{ij\ell}\mathbb Z\right),
\]
where
\[
g_{ij}=\gcd(n_i,n_j),
\qquad
g_{ij\ell}=\gcd(n_i,n_j,n_\ell).
\]
The first summand comes from the \(H_3\) of one cyclic factor. The second comes
from the two-factor K\"unneth terms. The third is generated by products of three
degree-one classes under the Pontryagin product on group homology. We denote
\[
A_3(\Lambda)=
\bigoplus_{i<j<\ell}\mathbb Z/g_{ij\ell}\mathbb Z
\]
and let \(Q_3(\Lambda)\) be the direct sum of the one-factor and two-factor
summands. Thus
\[
H_3(\Lambda;\mathbb Z)\simeq Q_3(\Lambda)\oplus A_3(\Lambda).
\]

Since \(k^\times\) is divisible, it is injective as a \(\mathbb Z\)-module. The
universal coefficient theorem gives
\[
H^3(\Lambda;k^\times)\simeq \Hom(H_3(\Lambda;\mathbb Z),k^\times).
\]
A degree-three cohomology class is therefore a character on this homology
group. We use the following normalized bar \(3\)-cocycles.

\subsubsection{Explicit representatives}

For \(C_n=\mathbb Z/n\mathbb Z\), write \(\overline a\in\{0,\ldots,n-1\}\) for
the representative of \(a\in C_n\). Define
\[
k_n:C_n\times C_n\longrightarrow \mathbb Z,
\qquad
k_n(b,c)=
\begin{cases}
1,&\overline b+\overline c\geq n,\\
0,&\overline b+\overline c<n.
\end{cases}
\]
It satisfies
\[
k_n(c,d)-k_n(b+c,d)+k_n(b,c+d)-k_n(b,c)=0,
\]
the cocycle identity induced by associativity modulo \(n\).

Fix a primitive \(m\)-th root of unity \(\zeta_m\in k^\times\) for every
integer \(m\) appearing in these formulas. For \(a,b,c\in\Lambda\), set
\[
\begin{aligned}
\eta_i(a,b,c)
&=
\zeta_{n_i}^{\,\overline{a_i}\,k_{n_i}(b_i,c_i)}
&& (1\leq i\leq r),\\
\eta_{ij}(a,b,c)
&=
\zeta_{g_{ij}}^{\,\overline{a_i}^{(g_{ij})}\,
k_{n_j}(b_j,c_j)}
&& (i<j),\\
\Theta_{ij\ell}(a,b,c)
&=
\zeta_{g_{ij\ell}}^{
\overline{a_i}^{(g_{ij\ell})}
\overline{b_j}^{(g_{ij\ell})}
\overline{c_\ell}^{(g_{ij\ell})}}
&& (i<j<\ell).
\end{aligned}
\]
Here \(\overline{a_i}^{(g)}\in\{0,\ldots,g-1\}\) denotes the reduction of
\(a_i\) modulo \(g\), and similarly for the other coordinates.

The powers of \(\eta_i\), \(\eta_{ij}\), and \(\Theta_{ij\ell}\) represent,
respectively, the cohomology classes dual to the one-factor, two-factor, and
three-factor generators in the K\"unneth decomposition of
\(H_3(\Lambda;\mathbb Z)\). Together they give the generators of
\(H^3(\Lambda;k^\times)\) under this universal-coefficient isomorphism.
These formulas are the bar representatives coming from the cyclic resolutions;
see
\cite[Section~2.1]{GalindoMorales2018} and
\cite[Sections~2--3]{HuangLiuYe2014}.

\subsubsection{The alternating component}

For a normalized \(3\)-cocycle \(\omega\),
\[
\psi_\Lambda(\omega)(a,b,c)
=
\frac{
\omega(a,b,c)\omega(c,a,b)\omega(b,c,a)
}{
\omega(a,c,b)\omega(c,b,a)\omega(b,a,c)
}.
\]
Breen's construction gives from this expression a homomorphism depending only on
\([\omega]\):
\[
\psi_\Lambda:H^3(\Lambda;k^\times)
\longrightarrow
\Hom(\wedge^3\Lambda,k^\times)
\]
\cite[Proposition~4.1]{Breen1999}.

The cocycles \(\eta_i\) and \(\eta_{ij}\) have trivial alternating component:
\[
\psi_\Lambda(\eta_i)=1,
\qquad
\psi_\Lambda(\eta_{ij})=1.
\]
They represent the cohomology classes for characters of \(Q_3(\Lambda)\).

For \(\Theta_{ij\ell}\), if \(i<j<\ell\) and \(g=g_{ij\ell}\), then
\[
\psi_\Lambda(\Theta_{ij\ell})(a,b,c)
=
\zeta_g^{
\det
\begin{pmatrix}
\overline{a_i}^{(g)}&\overline{a_j}^{(g)}&\overline{a_\ell}^{(g)}\\
\overline{b_i}^{(g)}&\overline{b_j}^{(g)}&\overline{b_\ell}^{(g)}\\
\overline{c_i}^{(g)}&\overline{c_j}^{(g)}&\overline{c_\ell}^{(g)}
\end{pmatrix}
}.
\]
In particular, if \(e_i,e_j,e_\ell\) are generators of the three
cyclic factors, then
\[
\psi_\Lambda(\Theta_{ij\ell})(e_i,e_j,e_\ell)=\zeta_g.
\]
Thus the \(\Theta_{ij\ell}\) factors map to the generators of
\(\Hom(\wedge^3\Lambda,k^\times)\). Equivalently, under
\[
H^3(\Lambda;k^\times)\simeq \Hom(H_3(\Lambda;\mathbb Z),k^\times),
\]
the map \(\psi_\Lambda\) is restriction of a character from
\(H_3(\Lambda;\mathbb Z)\) to \(A_3(\Lambda)\). Hence
\[
\ker(\psi_\Lambda)
\simeq
\Hom(Q_3(\Lambda),k^\times).
\]
Thus \eqref{eq:trivializable-cocycle-condition-v2} implies that, after replacing
\(\omega\) by a cohomologous normalized cocycle, we may use a representative
built only from the \(\eta_i\) and \(\eta_{ij}\) factors:
\[
\eta_i,
\qquad
\eta_{ij}\quad (i<j).
\]

\subsection{Quadratic Gauss sums in the trivializable pointed case}
\label{subsec:trivializable-pointed-tv-fp-v2}

Assume that \eqref{eq:trivializable-cocycle-condition-v2} holds, and replace
\(\omega\) by the normalized representative from
Subsection~\ref{subsec:degree-three-normal-form-v2} built only from the
\(\eta_i\) and \(\eta_{ij}\) factors. We insert this representative in the
pointed state sum and obtain a quadratic form on the coloring group.

Let \(t\) be an ordered triangulation of a closed oriented 3-manifold \(M\),
and let
\[
c\in Z^1(t;\Lambda)
\]
be a \(\Lambda\)-coloring. The tetrahedral weight is
\[
E_\omega(c)
=
\prod_{\Delta=[\beta\gamma\delta\epsilon]\in T(t)}
\omega(c_{\beta\gamma},c_{\gamma\delta},c_{\delta\epsilon})^{-\mu(\Delta)},
\]
with \(\mu(\Delta)\) as in Subsection~\ref{subsec:pointed-tv-state-sum-v2}.

Write
\[
\Lambda\simeq \bigoplus_{i=1}^r C_i,
\qquad
C_i=\mathbb Z/n_i\mathbb Z
\]
and decompose
\[
c=(c_1,\ldots,c_r),
\qquad
c_i\in Z^1(t;C_i).
\]
Choose lifts \(\widetilde c_i(e)\in\{0,\ldots,n_i-1\}\) of the edge labels.

By this choice of representative, we may write
\[
\omega=
\prod_i \eta_i^{A_i}
\prod_{i<j}\eta_{ij}^{A_{ij}}
\]
for fixed integers \(A_i\), taken modulo \(n_i\), and \(A_{ij}\), taken modulo
\(g_{ij}\). On a tetrahedron \(\Delta=[\beta\gamma\delta\epsilon]\), the factors
are
\[
\eta_i(c_{\beta\gamma},c_{\gamma\delta},c_{\delta\epsilon})
=
\zeta_{n_i}^{
\widetilde c_i(\beta\gamma)
k_{n_i}\bigl(c_i(\gamma\delta),c_i(\delta\epsilon)\bigr)}
\]
and, for \(i<j\),
\[
\eta_{ij}(c_{\beta\gamma},c_{\gamma\delta},c_{\delta\epsilon})
=
\zeta_{g_{ij}}^{
\overline{c_i(\beta\gamma)}^{(g_{ij})}
k_{n_j}\bigl(c_j(\gamma\delta),c_j(\delta\epsilon)\bigr)}.
\]
Here \(\overline{c_i(\beta\gamma)}^{(g_{ij})}\) denotes reduction modulo
\(g_{ij}\). Define
\[
L_j^\Delta(\widetilde c)
=
\widetilde c_j(\gamma\delta)+\widetilde c_j(\delta\epsilon)
-\widetilde c_j(\gamma\epsilon).
\]
Equation~\eqref{eq:pointed-tv-coloring-relation-v2}, applied to the ordered
face \(\gamma<\delta<\epsilon\) and projected to \(C_j\), gives
\[
c_j(\gamma\delta)+c_j(\delta\epsilon)=c_j(\gamma\epsilon)
\quad\text{in } C_j.
\]
Since the lifts lie in \(\{0,\ldots,n_j-1\}\), this is equivalent to
\[
L_j^\Delta(\widetilde c)
=
n_j\,k_{n_j}\bigl(c_j(\gamma\delta),c_j(\delta\epsilon)\bigr).
\]

Choose an integer \(N\) divisible by \(2\), by all \(n_i^2\), and by all
\(g_{ij}n_j\), and choose a primitive \(N\)-th root of unity \(\xi\). Since all
roots of unity in the representatives \(\eta_i\) and \(\eta_{ij}\) have order
dividing \(N\),
there are fixed integers \(u_i\) and
\(u_{ij}\) such that
\[
\zeta_{n_i}=\xi^{u_iN/n_i},
\qquad
\zeta_{g_{ij}}=\xi^{u_{ij}N/g_{ij}}.
\]
Define, on the integer lifts of the edge coordinates, the integral polynomial
\[
\begin{aligned}
q_t(\widetilde c)
&=
\sum_{\Delta=[\beta\gamma\delta\epsilon]\in T(t)}
-\mu(\Delta)
\Bigg[
\sum_i
A_i u_i\,\frac{N}{n_i^2}\,
\widetilde c_i(\beta\gamma)\,L_i^\Delta(\widetilde c)
\\
&\hspace{4.5cm}
+
\sum_{i<j}
A_{ij}u_{ij}\,\frac{N}{g_{ij}n_j}\,
\widetilde c_i(\beta\gamma)\,L_j^\Delta(\widetilde c)
\Bigg].
\end{aligned}
\]

\begin{lemma}
\label{lem:tv-weight-quadratic-form-v2}
For a normalized representative of the form
\[
\omega=
\prod_i \eta_i^{A_i}
\prod_{i<j}\eta_{ij}^{A_{ij}},
\]
the function
\[
Q_\omega:Z^1(t;\Lambda)\longrightarrow k^\times,
\qquad
Q_\omega(c)=E_\omega(c),
\]
is a quadratic form.
\end{lemma}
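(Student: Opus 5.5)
The plan is to show two things. First, \(E_\omega(c)=\xi^{q_t(\widetilde c)}\) for any choice of lifts. Second, the function \(c\mapsto \xi^{q_t(\widetilde c)}\) on the finite abelian group \(Z^1(t;\Lambda)\) satisfies \(Q(-c)=Q(c)\), and its polarization \(Q(c+c')/(Q(c)Q(c'))\) is a bicharacter.

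\emph{Step one.} Fix a tetrahedron \(\Delta\). The coloring relation \eqref{eq:pointed-tv-coloring-relation-v2} on the face \(\gamma<\delta<\epsilon\) gives \(k_{n_j}(c_j(\gamma\delta),c_j(\delta\epsilon))=L_j^\Delta(\widetilde c)/n_j\). Substitute this into \(\eta_i\) and \(\eta_{ij}\), and use \(\zeta_{n_i}=\xi^{u_iN/n_i}\) and \(\zeta_{g_{ij}}=\xi^{u_{ij}N/g_{ij}}\). The \(\eta_i^{A_i}\) factor becomes \(\xi^{A_iu_i(N/n_i^2)\widetilde c_i(\beta\gamma)L_i^\Delta}\). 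For \(\eta_{ij}\), replacing \(\overline{c_i(\beta\gamma)}^{(g_{ij})}\) by \(\widetilde c_i(\beta\gamma)\) is harmless, because \(\zeta_{g_{ij}}\) has order \(g_{ij}\); this gives \(\xi^{A_{ij}u_{ij}(N/(g_{ij}n_j))\widetilde c_i(\beta\gamma)L_j^\Delta}\). The divisibility assumptions on \(N\) make all exponents integers. Raising to \(-\mu(\Delta)\) and multiplying over tetrahedra then yields \(E_\omega(c)=\xi^{q_t(\widetilde c)}\) for the canonical lifts in \(\{0,\dots,n_i-1\}\).

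\emph{Step two.} Here one should be careful. The formula for \(q_t\) is an integral quadratic polynomial, homogeneous of degree two, in the lifted edge variables. The difficulty is that the sum of the canonical lifts of \(c\) and \(c'\) is not the canonical lift of \(c+c'\). So I will first prove that \(\xi^{q_t(\widetilde c)}\) is independent of the integer lift chosen, as long as each lift reduces correctly to \(c\).

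Change a single lift \(\widetilde c_i(e)\) by \(n_i\). Each monomial in \(q_t\) changes by terms of two kinds. The first kind is \(n_i\cdot(N/n_i^2)\cdot L_i^\Delta\). Since \(L_i^\Delta\) is divisible by \(n_i\) whenever the lifts satisfy the coloring relation modulo \(n_i\), this term is divisible by \(N\). The second kind is \((N/n_i^2)\widetilde c_i\cdot(\pm n_i)\), which is not obviously divisible by \(N\). I would handle this by not expanding: keep \(L_i^\Delta/n_i\) as an integer. The change of \(\widetilde c_i(\beta\gamma)\) by \(n_i\) multiplies the exponent contribution by \(\zeta_{n_i}^{n_i(\cdot)}=1\). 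A change in the \(L\)-variables changes \(L_i^\Delta/n_i\) by an integer \(m\), producing a factor \(\zeta_{n_i}^{\widetilde c_i(\beta\gamma)m}\) that is not \(1\) in general.

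I therefore expect lift-independence to fail term by term, and I will argue via the cocycle instead. This is the main obstacle. The value \(E_\omega(c)\) is intrinsically defined on \(Z^1(t;\Lambda)\). For the quadratic property, the natural route is to compute
\[
\frac{E_\omega(c+c')}{E_\omega(c)E_\omega(c')}
\]
with lifts \(\widetilde c+\widetilde c'\), which are valid integer lifts of \(c+c'\). From the closed formula, the only non-bilinear discrepancy comes from boundary corrections, which are coboundaries of 2-cochains on \(\Lambda\). Those cancel over the closed triangulation, because each interior triangle appears in two tetrahedra with opposite \(\mu\)-signs.

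Concretely, I will show that \(\xi^{q_t(\widetilde c)}\), evaluated on arbitrary integral lifts satisfying the coloring relation modulo \(n_i\), equals \(E_\omega(c)\) times a product of face terms. Those face terms cancel in pairs because \(t\) is closed and oriented, and I would verify this by a direct face-by-face check. With lift-independence in hand, the rest is formal. The map \(\widetilde c\mapsto\xi^{q_t(\widetilde c)}\) is the exponential of a homogeneous integral quadratic polynomial. Hence \(Q_\omega(c+c')Q_\omega(c)^{-1}Q_\omega(c')^{-1}=\xi^{B(\widetilde c,\widetilde c')}\) with \(B\) the associated symmetric bilinear integer form, which is bimultiplicative. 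Also \(Q_\omega(-c)=\xi^{q_t(-\widetilde c)}=Q_\omega(c)\) by homogeneity. Therefore \(Q_\omega\) is a quadratic form on \(Z^1(t;\Lambda)\).
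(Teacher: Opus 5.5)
Your plan is correct but organized differently from the paper's proof; your Step one matches the paper. The paper never states lift-independence. It keeps the canonical lifts in $\{0,\ldots,n_i-1\}$, writes the canonical lift of $c+c'$ as $\widetilde c_\nu+\widetilde c'_\nu-n_\nu s_\nu$ with the carry cochain $s_\nu=k_{n_\nu}(c_\nu,c'_\nu)$, and kills the resulting corrections inside the polarization. It then re-runs that correction analysis to get additivity of $B_t$ in each argument, and gets evenness from $B_t(c,c)=2\bar q_t(c)$. You instead isolate one lemma: $\xi^{q_t(\widetilde c)}$ depends only on $c$, for all integer lifts. After that, bimultiplicativity and $Q_\omega(-c)=Q_\omega(c)$ follow formally, because $q_t$ is a homogeneous integral quadratic polynomial and lifts are closed under addition and negation. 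That is cleaner bookkeeping. Both routes rest on the same computation, namely \eqref{eq:tv-summation-by-parts-v2}; the paper applies it to the carry cochain, and your version applies it to an arbitrary integer cochain $s$.

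The weak point is that your lemma, which carries all the content, is only announced as a direct face-by-face check. Here is what that check has to contain. Write $\lambda_{ij}$ for the coefficient of $\widetilde c_i(\beta\gamma)L_j^\Delta(\widetilde c)$ in $q_t$, and put $m_{ii}=n_i$ and $m_{ij}=g_{ij}$ for $i<j$, so that $\lambda_{ij}n_jm_{ij}\in N\mathbb Z$. Put $L_j^\Delta(s)=s_j(\gamma\delta)+s_j(\delta\epsilon)-s_j(\gamma\epsilon)$. Replacing $\widetilde c_\nu$ by $\widetilde c_\nu+n_\nu s_\nu$ changes that term by
\[
\lambda_{ij}\Bigl(n_is_i(\beta\gamma)L_j^\Delta(\widetilde c)+n_j\widetilde c_i(\beta\gamma)L_j^\Delta(s)+n_in_js_i(\beta\gamma)L_j^\Delta(s)\Bigr).
\]
The first and third summands are multiples of $\lambda_{ij}n_in_j\in N\mathbb Z$, using $n_j\mid L_j^\Delta(\widetilde c)$ and $m_{ij}\mid n_i$. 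The middle summand matters modulo $N$ only through $\widetilde c_i(\beta\gamma)L_j^\Delta(s)$ modulo $m_{ij}$. After summing with the signs $-\mu(\Delta)$, it vanishes by \eqref{eq:tv-summation-by-parts-v2} applied to $a=c_i\bmod m_{ij}$.

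Two details in your sketch need correcting. First, the reduction modulo $m_{ij}$ is essential. The integer lift $\widetilde c_i$ is not a cocycle over $\mathbb Z$, so the face terms do not cancel in pairs before reducing. Second, the cancellation is the statement that $\sum_\Delta\mu(\Delta)\Delta$ is a cycle. The sign a triangle receives combines $\mu(\Delta)$ with the position of that face in $\Delta$. Consequently two tetrahedra sharing a triangle need not have opposite $\mu$; for example, $[0123]$ and $[1234]$ sharing $[123]$ have equal $\mu$.
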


\begin{proof}
For \(c\in Z^1(t;\Lambda)\), let
\[
\bar q_t(c)=q_t(\widetilde c)\pmod N,
\]
where the lifts \(\widetilde c_i(e)\) are chosen in \(\{0,\ldots,n_i-1\}\).
For the \(\eta_{ij}\)-terms, replacing \(\widetilde c_i(\beta\gamma)\) by its
reduction modulo \(g_{ij}\) gives the same value modulo \(N\), because
\(L_j^\Delta(\widetilde c)\) is divisible by \(n_j\). Hence the formula for
\(q_t\) is exactly the total exponent of the product of the \(\eta_i\)- and
\(\eta_{ij}\)-factors, and
\[
E_\omega(c)=\xi^{\bar q_t(c)}.
\]

Let
\[
B_t(c,c')=
\bar q_t(c+c')-\bar q_t(c)-\bar q_t(c')
\quad\text{in }\mathbb Z/N\mathbb Z.
\]
Because \(q_t\) is a sum of the terms displayed below, bilinearity of \(B_t\)
reduces to the corresponding calculation for one such term. Write
\[
m_{ii}=n_i,\qquad m_{ij}=g_{ij}\quad (i<j),
\]
and let \(\lambda_{ij}\) denote the corresponding coefficient in \(q_t\), namely
\[
\lambda_{ii}=A_i u_i\frac{N}{n_i^2},
\qquad
\lambda_{ij}=A_{ij}u_{ij}\frac{N}{g_{ij}n_j}\quad (i<j).
\]
The elementary summand has the form
\[
q_{ij}(c)=
\sum_{\Delta=[\beta\gamma\delta\epsilon]}
-\mu(\Delta)\lambda_{ij}\,
\widetilde c_i(\beta\gamma)L_j^\Delta(\widetilde c),
\]
with \(i=j\) for the \(\eta_i\)-terms and \(i<j\) for the \(\eta_{ij}\)-terms.

For \(c,c'\in Z^1(t;\Lambda)\), set
\[
s_\nu(e)=k_{n_\nu}(c_\nu(e),c'_\nu(e)).
\]
Then
\[
\widetilde{(c+c')_\nu}(e)
=
\widetilde c_\nu(e)+\widetilde c'_\nu(e)-n_\nu s_\nu(e),
\]
and, for \(\Delta=[\beta\gamma\delta\epsilon]\),
\[
L_\nu^\Delta(\widetilde{c+c'})
=
L_\nu^\Delta(\widetilde c)+L_\nu^\Delta(\widetilde c')
-n_\nu\bigl(s_\nu(\gamma\delta)+s_\nu(\delta\epsilon)-s_\nu(\gamma\epsilon)\bigr).
\]
Since \(m_{ij}\mid n_i\), the correction in the first factor contributes a multiple
of \(N\). The correction in the second factor is handled by the following
identity: if \(a\in Z^1(t;\mathbb Z/m\mathbb Z)\) and \(s\) is any
\(\mathbb Z/m\mathbb Z\)-valued edge cochain, then
\begin{equation}
\label{eq:tv-summation-by-parts-v2}
\sum_{\Delta=[\beta\gamma\delta\epsilon]}
\mu(\Delta)\,
a(\beta\gamma)
\bigl(s(\gamma\delta)+s(\delta\epsilon)-s(\gamma\epsilon)\bigr)
=0
\quad\text{in }\mathbb Z/m\mathbb Z.
\end{equation}
To verify \eqref{eq:tv-summation-by-parts-v2}, define the \(2\)-cochain
\[
u(\beta\gamma\delta)=a(\beta\gamma)s(\gamma\delta).
\]
For \(\Delta=[\beta\gamma\delta\epsilon]\), the defining relation
\eqref{eq:pointed-tv-coloring-relation-v2} for \(a\) gives
\(a(\beta\delta)=a(\beta\gamma)+a(\gamma\delta)\), and hence
\[
\delta u(\Delta)
=
-a(\beta\gamma)
\bigl(s(\gamma\delta)+s(\delta\epsilon)-s(\gamma\epsilon)\bigr).
\]
Thus the displayed sum is the negative of the sum of \(\delta u\) over the
oriented tetrahedra of \(t\). Since \(M\) is closed, each oriented triangle occurs
twice with opposite signs in this total boundary, so the sum is zero.

Applying \eqref{eq:tv-summation-by-parts-v2} with \(m=m_{ij}\) and with \(a\)
equal to the reduction of
\(c_i+c'_i\) modulo \(m_{ij}\), the second-factor correction contributes a
multiple of
\[
\lambda_{ij}n_jm_{ij}=0\quad\text{in }\mathbb Z/N\mathbb Z.
\]
Thus the polarization of \(q_{ij}\) is
\[
B_{ij}(c,c')
=
\sum_{\Delta=[\beta\gamma\delta\epsilon]}
-\mu(\Delta)\lambda_{ij}
\left(
\widetilde c_i(\beta\gamma)L_j^\Delta(\widetilde c')
+\widetilde c'_i(\beta\gamma)L_j^\Delta(\widetilde c)
\right)
\pmod N.
\]
Additivity in each argument follows from \eqref{eq:tv-summation-by-parts-v2}.
Replacing \(c\) by \(c+c''\) introduces two kinds of extra terms: the correction
in the first factor and the corresponding correction in \(L_j^\Delta\). The
first gives a multiple of \(N\), because
\(m_{ij}\mid n_i\) and \(L_j^\Delta(\widetilde c')\) is divisible by \(n_j\).
The second vanishes by \eqref{eq:tv-summation-by-parts-v2} with \(a\) equal to
the reduction of \(c'_i\) modulo \(m_{ij}\). The verification of additivity in
the second argument is identical.
Therefore
\[
B_t=\sum_i B_{ii}+\sum_{i<j}B_{ij}
\]
is bilinear.

Finally, \(B_t(c,c)=2\bar q_t(c)\). Since \(\bar q_t(0)=0\) and \(B_t\) is
bilinear,
\[
0=\bar q_t(c-c)=\bar q_t(c)+\bar q_t(-c)-B_t(c,c),
\]
so \(\bar q_t(-c)=\bar q_t(c)\). Hence \(Q_\omega(c)=\xi^{\bar q_t(c)}\) is a
quadratic form on \(Z^1(t;\Lambda)\).
\end{proof}

For each tetrahedron \(\Delta\), the formula for \(q_t\) adds a fixed number of
monomials
\[
\widetilde c_i(e)\widetilde c_j(e')
\]
with coefficients in \(\mathbb Z/N\mathbb Z\). Since \(\Lambda\), \(\omega\),
and \(N\) are fixed, the coefficient matrix of \(q_t\) is computed in time
linear in \(|T(t)|\).

Let \(d:\Lambda\to\{\pm1\}\) be a character, and let
\(\chi_{d,t}\) be the character appearing in
\eqref{eq:pointed-tv-spherical-state-sum-v2}. Define
\[
Q_{\omega,d,t}:Z^1(t;\Lambda)\longrightarrow k^\times,
\qquad
Q_{\omega,d,t}(c)=\chi_{d,t}(c)E_\omega(c).
\]
The unnormalized Gauss sum is
\[
\mathcal G_{\omega,d}(t)
:=
\sum_{c\in Z^1(t;\Lambda)}Q_{\omega,d,t}(c).
\]
With this notation, the pointed state-sum formula
\eqref{eq:pointed-tv-spherical-state-sum-v2} becomes
\begin{equation}
\label{eq:tv-pointed-gauss-sum-v2}
|M|_{\Vec^{\omega,d}_\Lambda}
=
|\Lambda|^{-|V(t)|}\mathcal G_{\omega,d}(t).
\end{equation}
By Lemma~\ref{lem:tv-weight-quadratic-form-v2}, \(E_\omega\) is a quadratic
form on \(Z^1(t;\Lambda)\). The factor \(\chi_{d,t}\) is a character, hence
contributes only a linear term to the exponent. Since \(\chi_{d,t}\) has values
in \(\{\pm1\}\), \(Q_{\omega,d,t}\) is again a computable quadratic form on
\(Z^1(t;\Lambda)\).
Thus \(\mathcal G_{\omega,d}(t)\) is the unnormalized Gauss sum of the finite
pre-metric group \((Z^1(t;\Lambda),Q_{\omega,d,t})\).

\begin{theorem}
\label{thm:trivializable-pointed-tv-in-fp-v2}
If \(\A\) is trivializable pointed, then
\[
\TVEVAL(\A)\in\FP.
\]
\end{theorem}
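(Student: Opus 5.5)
The plan is to compute \(|M|_\A\) directly from the input triangulation via the Gauss-sum expression \eqref{eq:tv-pointed-gauss-sum-v2}, and to reduce that Gauss sum to the polynomial-time algorithm of \cite[Theorem~2.2 and Appendix~A]{DelaneyMariaSampertonTY2025}, exactly as in the proof of Theorem~\ref{thm:pointed-rt-in-fp-v2}.

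First, since \(\A\) is fixed and trivializable pointed, I fix once and for all an equivalence \(\A\simeq\Vec^{\omega,d}_\Lambda\), where \(\Lambda\) is finite abelian, \(d\) is a character, and \(\omega\) is the normalized representative from Subsection~\ref{subsec:degree-three-normal-form-v2} built only from the factors \(\eta_i\) and \(\eta_{ij}\). Changing the representative of \([\omega]\) does not change \(|M|_\A\). I also fix the integer \(N\) and the root of unity \(\xi\). All of this is constant data, so none of it costs anything at runtime.

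Given an input triangulation \(t\), ordered by choosing any vertex order, I proceed in four steps.
\begin{enumerate}
\item Compute the orientation signs \(\mu(\Delta)\) by propagating a coherent orientation across shared faces. This is a graph traversal, linear in \(|T(t)|\).
\item Compute \(Z^1(t;\Lambda)\) as a finite abelian group, presented by an explicit generating set with relations. For each cyclic factor \(C_i\), this is the kernel of the coboundary \((\mathbb Z/n_i)^{E(t)}\to(\mathbb Z/n_i)^{F(t)}\). Smith normal form over \(\mathbb Z\) gives its primary decomposition in polynomial time, with \(O(|E(t)|)\) cyclic summands.
\item Pull back the quadratic form \(Q_{\omega,d,t}\) to these coordinates. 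By Lemma~\ref{lem:tv-weight-quadratic-form-v2}, \(E_\omega\) is a quadratic form given by the explicit integral polynomial \(q_t\) modulo \(N\), which has linearly many monomials. The factor \(\chi_{d,t}\) is a character, determined by its values on edge labels, so it adds a linear term. Composing with the parametrization of the cocycle group turns both into an explicit quadratic form on the primary decomposition, with exponent in \(\mathbb Z/N\).
\item Evaluate \(\mathcal G_{\omega,d}(t)\) with the cited algorithm, then multiply by the fixed-field scalar \(|\Lambda|^{-|V(t)|}\) (together with \(|Z^1|^{1/2}\) if the cited theorem uses the normalized sum).
\end{enumerate}

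The main obstacle is step 3. The lifts \(\widetilde c_i(e)\in\{0,\ldots,n_i-1\}\) are not additive, so \(q_t\) evaluated on lifts of generator combinations is not literally a polynomial in the generator coordinates. The remedy I would try first is to use the bilinearity of \(B_t\) established in the lemma. I would compute the diagonal values \(\bar q_t(g_a)\) and the pairings \(B_t(g_a,g_b)\) on the generators \(g_a\) of \(Z^1(t;\Lambda)\), each by direct evaluation in \(O(|T(t)|)\) time. From these I recover
\[
\bar q_t\Bigl(\sum_a x_a g_a\Bigr)=\sum_a x_a^2\,\bar q_t(g_a)+\sum_{a<b}x_ax_b\,B_t(g_a,g_b)\pmod N,
\]
which is valid because \(\bar q_t(x g)=x^2\bar q_t(g)\) holds for quadratic forms with \(\bar q_t(-c)=\bar q_t(c)\).

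There are \(O(|E(t)|^2)\) pairings to compute, so this step is polynomial. The resulting quadratic form is in the explicit, well-defined format that the Gauss-sum algorithm requires. The one remaining point is that the form must be well defined on each cyclic summand, including the even-order \(2\)-primary cases. This holds because \(Q_{\omega,d,t}\) is a genuine function on the group, as the lemma shows.
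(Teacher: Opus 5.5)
Your proposal is correct and follows the same route as the paper's proof: pass to an $\eta_i,\eta_{ij}$ representative of $[\omega]$, use \eqref{eq:tv-pointed-gauss-sum-v2}, compute $Z^1(t;\Lambda)=\ker\delta_t$ with generators by Smith normal form, pull $Q_{\omega,d,t}$ back to the cyclic decomposition, and apply the Delaney--Maria--Samperton Gauss-sum algorithm. Your explicit pull-back through the values $\bar q_t(g_a)$ and $B_t(g_a,g_b)$ fills in a step the paper only asserts; alternatively, substituting $\widetilde c=\sum_a x_a\widetilde g_a$ directly into $q_t$ also works, because \eqref{eq:tv-summation-by-parts-v2} shows that $q_t \bmod N$ does not depend on which integer lift of a cocycle is used.
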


\begin{proof}
Write \(\A\) as a spherical pointed fusion category
\[
\A\simeq \Vec^{\omega,d}_\Lambda,
\]
where \(\Lambda\) is finite abelian, \([\omega]\) satisfies
\eqref{eq:trivializable-cocycle-condition-v2}, and
\(d:\Lambda\to\{\pm1\}\) is the character.
By \eqref{eq:trivializable-cocycle-condition-v2},
Subsection~\ref{subsec:degree-three-normal-form-v2} gives a cohomologous
normalized representative built only from the \(\eta_i\) and \(\eta_{ij}\)
factors. Cohomologous
normalized cocycles define equivalent pointed categories by the \(2\)-cochain
monoidal equivalence described in \cite[Appendix~A.3]{TuraevVirelizier2017}.
Transporting the spherical structure along this equivalence keeps the same
dimension character \(d\). Hence replacing \(\omega\) by this representative does
not change the Turaev--Viro invariant. We do this replacement and keep the
notation \(\omega\).

Let \(t\) be an ordered triangulation of a closed connected oriented
3-manifold \(M\). By \eqref{eq:tv-pointed-gauss-sum-v2},
\[
|M|_{\Vec^{\omega,d}_\Lambda}
=
|\Lambda|^{-|V(t)|}\mathcal G_{\omega,d}(t).
\]
Define the homomorphism
\[
\delta_t:\Lambda^{E(t)}\longrightarrow \Lambda^{F(t)}
\]
by
\begin{equation}
\label{eq:tv-coboundary-coordinate-v2}
(\delta_t c)_{\beta\gamma\delta}
=
c_{\gamma\delta}-c_{\beta\delta}+c_{\beta\gamma}
\end{equation}
on each ordered triangle \(\beta<\gamma<\delta\). Then
\eqref{eq:pointed-tv-coloring-relation-v2} gives
\[
Z^1(t;\Lambda)=\ker(\delta_t).
\]
The matrix of \(\delta_t\) has entries \(0,\pm1\), interpreted in the fixed
cyclic factors of \(\Lambda\). Smith and Hermite normal form compute, in
polynomial time, a decomposition of \(\ker(\delta_t)\) as a direct sum of cyclic groups, together
with generators in the original edge coordinates \cite{KannanBachem1979}.

The discussion preceding the proposition and
Lemma~\ref{lem:tv-weight-quadratic-form-v2} show that \(Q_{\omega,d,t}\) is a
computable quadratic form on this kernel. Pulling it back to the computed
decomposition gives a quadratic Gauss sum over a finite abelian group given by
cyclic factors. Such sums, including the degenerate case, are computable in
polynomial time by
\cite[Theorem~2.2 and Appendix~A]{DelaneyMariaSampertonTY2025}. The scalar
\(|\Lambda|^{-|V(t)|}\) is part of the same arithmetic model.

Thus \(\TVEVAL(\A)\in\FP\).
\end{proof}

\subsection{Proof of the dichotomy}
\label{subsec:tv-final-dichotomy-v2}

\begin{theorem}
\label{thm:tv-dichotomy-v2}
Let \(\A\) be a spherical fusion category over \(k\).
\begin{enumerate}
\item If \(\A\) is trivializable pointed, then
\[
\TVEVAL(\A)\in\FP.
\]
\item If \(\A\) is not trivializable pointed, then
\[
\TVEVAL(\A)
\]
is \(\#\mathrm P\)-hard under polynomial-time Turing reductions.
\end{enumerate}
\end{theorem}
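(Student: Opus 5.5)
Part (1) is exactly Theorem~\ref{thm:trivializable-pointed-tv-in-fp-v2}, so nothing remains there. For part (2) we route through the Drinfeld center and reduce to the anomaly-free hardness theorem. Assume \(\A\) is not trivializable pointed. By the equivalence recorded in Subsection~\ref{subsec:tv-trivializable-pointed-v2}, which is the dominance argument together with \cite[Theorem~3.2]{AngionoGalindo2017}, \(\Z(\A)\) is not pointed. By \cite[Theorem~5.4]{TuraevVirelizier2017}, \(\Z(\A)\) is an anomaly-free modular category, and we use the anomaly-free rank \(D_{\Z(\A)}=\Dim(\A)\). Theorem~\ref{thm:core-anomaly-free-hardness-v2} then shows that \(\RTEVAL(\Z(\A))\) is \(\#\mathrm P\)-hard. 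More precisely, by the remark following it, every oracle query in that reduction is a surgery presentation \(L_G\) of some \(M_G\in\M_\Delta\), produced from a connected simple graph \(G\) with \(\Delta(G)\le\Delta\).

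It therefore suffices to give a polynomial-time Turing reduction from \(\EVAL^{(\Delta)}_{\ConnSimp}(A_{\Z(\A)})\) to \(\TVEVAL(\A)\). We do not try to convert arbitrary surgery inputs into triangulations; we use only the restricted family. The algorithm runs as follows.
\begin{enumerate}
\item On input \(G\), if \(E(G)=\emptyset\), output \(|\Irr(\Z(\A))|\).
\item Otherwise, use Proposition~\ref{prop:effective-triangulation-v2} to produce in polynomial time an ordered triangulation \(t_G\) of \(M_G\).
\item Query the \(\TVEVAL(\A)\) oracle on \(t_G\) to obtain \(|M_G|_\A\).
\item By \eqref{eq:tv-rt-center-comparison-v2}, \(|M_G|_\A=Z_{\Z(\A)}(M_G)\). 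Applying \eqref{eq:rt-graph-partition-identity-v2} to \(\B=\Z(\A)\), with \(\D_{\Z(\A)}=\Dim(\A)\neq0\), gives
\[
Z_{A_{\Z(\A)}}(G)=\Dim(\A)^{-|E(G)|}\,|M_G|_\A .
\]
This scalar is computable in the fixed number field.
\end{enumerate}
Composing with the hardness of \(\EVAL^{(\Delta)}_{\ConnSimp}(A_{\Z(\A)})\), which follows from Theorem~\ref{thm:mbr1-iff-pointed-v2} and Theorem~\ref{thm:cai-govorov-v2}, makes \(\TVEVAL(\A)\) \(\#\mathrm P\)-hard.

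The main point needing care is compatibility of normalizations and of number fields. The identity \(|M|_\A=Z_{\Z(\A)}(M)\) must hold with the specific rank choice \(D_{\Z(\A)}=\Dim(\A)\) and with the Gauss sums \(\Delta_\pm(\Z(\A))=\Dim(\A)\). This is what \cite[Corollary~17.7(a)]{TuraevVirelizier2017} provides, and the paper has fixed these conventions. The number field \(k_0\) for \(\A\) should also contain the modular data of \(\Z(\A)\), namely the entries of \(A_{\Z(\A)}\); if necessary we enlarge it at the outset, which is harmless because the category is fixed. A second detail is that the oracle answers on triangulations of the oriented manifold \(M_G\). Proposition~\ref{prop:effective-triangulation-v2} must therefore produce a triangulation of \(M_G\) with its orientation, or the orientation must be recorded by the signs \(\mu(\Delta)\). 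If only the unoriented manifold were available, we would instead obtain \(Z_{\Z(\A)}(-M_G)\). By the reversal identity this equals \(Z_{\Z(\A)^{\mathrm{rev}}}(M_G)\), and \(\Z(\A)^{\mathrm{rev}}\simeq\Z(\A^{\mathrm{op}})\) is again non-pointed and anomaly-free, so the same argument goes through.
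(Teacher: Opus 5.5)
Your proposal is correct and follows the paper's proof essentially step for step. Part (1) is Theorem~\ref{thm:trivializable-pointed-tv-in-fp-v2}. For part (2) you do what the paper does: use that \(\Z(\A)\) is non-pointed anomaly-free modular, handle the edgeless graph separately, triangulate \(M_G\) via Proposition~\ref{prop:effective-triangulation-v2}, and recover \(Z_{A_{\Z(\A)}}(G)=\Dim(\A)^{-|E(G)|}|M_G|_\A\) from one oracle call through \eqref{eq:tv-rt-center-comparison-v2} and \eqref{eq:rt-graph-partition-identity-v2}. Your added remarks on enlarging the number field and on orientation are harmless refinements that the paper leaves implicit.
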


\begin{proof}
Assume first that \(\A\) is trivializable pointed. Then
\(\TVEVAL(\A)\in\FP\) by
Theorem~\ref{thm:trivializable-pointed-tv-in-fp-v2}.

Now assume that \(\A\) is not trivializable pointed. By the criterion recalled in
Subsection~\ref{subsec:tv-trivializable-pointed-v2}, the modular category
\(\Z(\A)\) is not pointed. The center is anomaly-free modular by
\cite[Theorem~5.4]{TuraevVirelizier2017}. Hence
Theorem~\ref{thm:core-anomaly-free-hardness-v2}, applied to \(\B=\Z(\A)\),
gives a constant \(\Delta\) such that evaluation of
\[
G\longmapsto Z_{\Z(\A)}(M_G)
\]
is \(\#\mathrm P\)-hard for connected simple graphs \(G\) of maximum degree at
most \(\Delta\).

For the one-vertex edgeless graph, the corresponding graph-partition value is
fixed; on that input the reduction outputs this value and makes no oracle query.
Thus the oracle queries in the reduction may be
taken to come from connected simple graphs with at least one edge. Given such a
graph \(G\), construct in
polynomial time a triangulation of \(M_G\) using
Proposition~\ref{prop:effective-triangulation-v2}.
By the Turaev--Viro/RT comparison
\eqref{eq:tv-rt-center-comparison-v2},
\[
|M_G|_\A=Z_{\Z(\A)}(M_G).
\]
Hence an oracle for \(\TVEVAL(\A)\) evaluates
\[
G\longmapsto Z_{\Z(\A)}(M_G)
\]
on the bounded-degree graph-manifold family. Together with the polynomial-time
construction of \(t_G\), this gives a polynomial-time Turing reduction.
\end{proof}

\begin{remark}
The proof gives the following restricted-query version. There is a constant
\(\Delta\), depending only on \(\A\), such that the hardness reduction may be
chosen so that every oracle query is the triangulation, produced by
Proposition~\ref{prop:effective-triangulation-v2}, of a manifold \(M_G\), where
\(G\) is connected simple, \(E(G)\neq\emptyset\), and
\(\Delta(G)\leq\Delta\).
\end{remark}

\bibliographystyle{amsalpha}
\bibliography{references}

\end{document}